\setlist{leftmargin=*, wide, labelindent=0pt}
\setlist[enumerate]{label*=(\alph*),ref=\alph*}
\numberwithin{equation}{section}% makes equat numb contain the section
\crefname{Thm}{Theorem}{Theorems}
\crefname{Rem}{Remark}{Remarks}
\crefname{Prop}{Proposition}{Propositions}
\crefname{Cor}{Corollary}{Corollaries}
\crefname{Cons}{Construction}{Constructions}
\crefname{Exa}{Example}{Examples}
\crefname{Lem}{Lemma}{Lemmas}
\crefname{Rec}{Recollection}{Recollections}
\def\makeautorefname#1#2{\expandafter\def\csname#1autorefname\endcsname{#2}}
\def\equationautorefname~#1\null{(#1)\null}
\theoremstyle{plain}  % default
\newtheorem{theorem}{Theorem}[section]
\newtheorem{proposition}{Proposition}[section]
\newtheorem{lemma}{Lemma}[section]
\newtheorem{corollary}{Corollary}[section]
\theoremstyle{definition}
\newtheorem{definition}{Definition}[section]
\newtheorem{notation}{Notation}[section]
\newtheorem{example}{Example}[section]
\newtheorem{remark}{Remark}[section]
\let\c@corollary=\c@theorem
\let\c@proposition=\c@theorem
\let\c@lemma=\c@theorem
\let\c@definition=\c@theorem
\let\c@example=\c@theorem
\let\c@remark=\c@theorem
\numberwithin{equation}{section}
\newcommand{\nc}{\newcommand}
\nc{\dmo}{\DeclareMathOperator}
\dmo{\coker}{coker}
\dmo{\cone}{cone}
\dmo{\Der}{D}% ground notation for derived categories
\dmo{\DAM}{\DAMbig^{\geom}}%
\dmo{\DAMbig}{DAM}%
\dmo{\Ext}{Ext}
\dmo{\Gal}{Gal}
\dmo{\Hm}{H}% (co)homology
\dmo{\Hom}{Hom}
\dmo{\Id}{Id}
\dmo{\Ind}{Ind}
\dmo{\Infl}{Infl}
\dmo{\Ker}{Ker}
\dmo{\Mod}{Mod}% modules
\dmo{\opname}{op}
\dmo{\perm}{perm}% finite permutation modules
\dmo{\Perm}{Perm}% permutation modules
\dmo{\SH}{SH}% ground name for cat of compact spectra
\dmo{\SHmot}{SH^{\mathrm{c}}_{\bbA^{\!1}}}% ground name for cat of compact motivic spectra
\dmo{\Spc}{Spc}
\dmo{\Spec}{Spec}
\dmo{\Spech}{\Spec^{h}}
\dmo{\stab}{stab}% stable category of fin. gen. mod.
\dmo{\Stab}{Stab}% stable category of non-fin. gen. mod.
\dmo{\supp}{supp}
\dmo{\thick}{thick}
\dmo{\Locname}{Loc}% for localizing subcat (w/o tensor)
\nc{\Inj}{\mathrm{Inj}}% injective complexes
\nc{\cat}[1]{\mathscr{#1}}%or: \nc{\cat}[1]{\mathcal{#1}}
\nc{\cK}{\cat{K}}
\nc{\cL}{\cat{L}}
\nc{\colim}{\mathop{\mathrm{colim}}}
\nc{\cofib}{\mathop{\mathrm{cofib}}}
\nc{\cP}{\cat{P}}
\nc{\cQ}{\cat{Q}}
\nc{\cS}{\cat{S}}
\nc{\cT}{\cat{T}}
\nc{\CAlg}{\mathrm{CAlg}}
\nc{\eg}{{\sl e.g.}\@\xspace}
\nc{\gp}{\mathfrak{p}}% prime p
\nc{\gq}{\mathfrak{q}}% prime q
\nc{\hook}{\hookrightarrow}
\nc{\ie}{{\sl i.e.}\@\xspace}
\nc{\into}{\mathop{\rightarrowtail}}
\nc{\inv}{^{-1}}
\nc{\kk}{k}%{\Bbbk}
\nc{\kkG}{\kk G}% common misprint
\nc{\Loc}[1]{\Locname(#1)}
\nc{\loccit}{{\sl loc.\ cit.}\xspace}
\nc{\Mid}{\,\big|\,}
\nc{\onto}{\mathop{\twoheadrightarrow}}
\nc{\op}{^{\opname}}
\nc{\sminus}{\smallsetminus}
\nc{\potimes}[1]{^{\otimes #1}}% tensor power
\nc{\sbull}{{\scriptscriptstyle\bullet}}
\nc{\SET}[2]{\big\{\,#1\Mid#2\,\big\}}
\nc{\unit}{\mathbb{1}}% unit for \otimes
\newcommand{\bb}[1]{\mathbb{#1}}
\newcommand{\mc}[1]{\mathcal{#1}}
\newcommand{\mf}[1]{\mathfrak{#1}}
\newcommand{\mo}[1]{\operatorname{#1}}
\nc{\W}{\mathbb{W}}
\dmo{\sta}{sta}% for the Rickard functor and the quotient-to-stab functors
\nc{\rsd}[1]{\mathrm{rsd}_{#1}}% for residue field functors
\dmo{\End}{End}
\nc{\isoto}{\overset{\sim}{\,\to\,}}
\nc{\isofrom}{\overset{\sim}{\,\leftarrow\,}}
\let\le=\leqslant% to have "nicer" \le
\nc{\sto}{\rightsquigarrow}
\nc{\xisoto}[1]{\xrightarrow[\sim]{#1}}
\nc{\xto}[1]{\xrightarrow{#1}}
\nc{\xfrom}[1]{\xleftarrow{#1}}
\nc{\xinto}[1]{\overset{#1}{\,\into\,}}
\nc{\xonto}[1]{\overset{#1}{\,\onto\,}}
\nc{\lto}{\leftarrow}
\nc{\normaleq}{\trianglelefteqslant}
\nc{\normal}{\lhd}
\nc{\normaleop}{\mathop{\mathring{\trianglelefteqslant}}}%{\trianglelefteqslant^{\textup{o}}}
\dmo{\chara}{char}%
\dmo{\CoInd}{CoInd}
\dmo{\DMbig}{DM}% Voevodsky's triangulated category of motives
\dmo{\id}{id}
\dmo{\Img}{Im}
\dmo{\im}{im}
\dmo{\Komp}{K}% ground notation for htpy categories
\dmo{\proj}{proj}% f.g. projective modules
\dmo{\rmH}{H}
\dmo{\Res}{Res}
\dmo{\smallb}{b}% ground exponent for ``bounded''
\dmo{\geom}{gm}% ground exponent for ``compact''
\dmo{\stabname}{stab}% stable category of fin. gen. mod.
\dmo{\comp}{comp}
\dmo{\Supp}{Supp}
\dmo{\kosname}{kos}
\dmo{\subname}{Sub}
\nc{\Sub}[1]{\subname_{#1}}
\nc{\Weyl}[2]{{#1}/\!\!/{#2}}%{\overline{#1}_{#2}}%{W_{\!#1}{#2}}% sometimes Weyl is N_G(H)/C_G(H) not mod H.
\nc{\WGH}{\Weyl{G}{H}}
\nc{\tInd}{{}^{\otimes\!}\Ind}% tensor-induction
\nc{\inn}{;}% could be replaced by {\le} or {,} or {;}
\nc{\Vee}[1]{V_{#1}}%
\nc{\Loctens}[1]{\Locname_{\otimes}(#1)}
\nc{\SpcKG}{\Spc(\cK(G))}% most used
\nc{\SpcKGk}{\Spc(\cK(G;\kk))}% most used
\nc{\SpcKE}{\Spc(\cK(E))}% most used
\nc{\Rall}{\rmH^{\sbull\sbull}}%{\rmH_{\scriptscriptstyle\blacktriangle}^{\sbull}}
\nc{\EA}[2]{\mathcal{E}_{#1}(#2)}
\nc{\EApp}[1]{\EA{p}{#1}}
\nc{\kos}[2][]{\kosname_{#1}(#2)}% syntax \kos[ambient group]{subgroup}
\nc{\sKG}{\kos[G]{K}}% most used
\nc{\Zp}{\hat{\bbZ}_p}% p-adic integers
\nc{\adh}[1]{\overline{#1}}% adherence
\nc{\adj}{\dashv}
\nc{\apriori}{{\sl a priori}\xspace}
\nc{\bs}{\backslash}
\nc{\cf}{{\sl cf.}\ }
\nc{\Db}{\Der_{\smallb}}% derived bounded category%\scriptscriptstyle
\nc{\D}{\Der}% derived category
\nc{\FFsep}{\overline{\FF}}
\nc{\Fp}{\bbF_{\!p}}% finite field with p elements
\nc{\gm}{\mathfrak{m}}% prime m
\mathchardef\mhyphen="2D
\nc{\ideal}[1]{\langle #1\rangle}
\nc{\Kb}{\Komp_{\smallb}}% htpy bounded category%\scriptscriptstyle
\nc{\K}{\Komp}% htpy category
\nc{\leop}{\mathop{\mathring{\le}}}%{\le^{\textup{o}}}
\nc{\Lotimes}{\otimes^{\rmL}}
\nc{\To}{\Rightarrow}
\nc{\cSpec}{\mathsf{Spec}}
\nc{\Top}{\mathsf{Top}}
\let\theoldbibliography\thebibliography
\renewcommand{\thebibliography}[1]{%
	\theoldbibliography{#1}%
	\setlength{\parskip}{0ex}
	\setlength{\itemsep}{0.5ex plus 0.2ex minus 0.2ex}
	\small
}
\apptocmd{\thebibliography}{\raggedright}{}{}
\font\maljapanese=dmjhira at 2.5ex
\newcommand{\yo}{\textrm{\!\maljapanese\char"48}}
\let\ea\expandafter
\def\foreachLetter#1#2#3{\foreachcount=#1
  \ea\loop\ea\ea\ea#3\@Alph\foreachcount
  \advance\foreachcount by 1
  \ifnum\foreachcount<#2\repeat}
\def\definebb#1{\ea\gdef\csname #1#1\endcsname{\ensuremath{\mathbb{#1}}\xspace}}
\date{October 8, 2024}
\author{Logan Hyslop}
\email{loganrhyslop@gmail.com}
\urladdr{https://loganhyslop.github.io/}
\begin{document}
	\DeclareDocumentCommand\rart{ g }{%
	{\ar[r, tail]%
		\IfNoValueF {#1} { \ar[r, tail, "#1"]}%
	}%
}
\DeclareDocumentCommand\dart{ g }{%
	{\ar[d, tail]%
		\IfNoValueF {#1} { \ar[d, tail, "#1"]}%
	}%
}
\DeclareDocumentCommand\rarh{ g }{%
	{\ar[r, two heads]%
		\IfNoValueF {#1} { \ar[r, two heads, "#1"]}%
	}%
}
\DeclareDocumentCommand\darh{ g }{%
	{\ar[d, two heads]%
		\IfNoValueF {#1} { \ar[d, two heads, "#1"]}%
	}%
}

%------------------------------------------------------------------------------

\title{Towards the Nerves of Steel Conjecture}

\begin{abstract}
Given a local $\otimes$-triangulated category, and  a fiber sequence $y\xrightarrow{g} \unit \xrightarrow{f} x$, one may ask if there is always a nonzero object $z$ such that either $z\otimes f$ or $z\otimes g$ is $\otimes$-nilpotent.  The claim that this property holds for all local $\otimes$-triangulated categories is equivalent to Balmer's ``nerves of steel conjecture'' \cite[~Remark 5.15]{Balmer2020}.  In the present paper, we will see how this property can fail if the category we start with is not rigid, discuss a large class of categories where the property holds, and ultimately prove that the nerves of steel conjecture is equivalent to a stronger form of this property.
\end{abstract}

\subjclass[2024]{18F99; 55P43}
\keywords{Tensor triangular geometry, Nerves of Steel, spectrum, rational $\bb{E}_{\infty}$-ring.}

\maketitle

\section{Introduction}
\label{sec:intro}

%------------------------------------------------------------------------------
%------------------------------------------------------------------------------
The nilpotence theorem of Devinatz-Hopkins-Smith \cite{DevHopSmith} paved the way towards chromatic homotopy theory, which was a major example motivating Balmer's theory of tensor triangular geometry introduced in \cite{Balmer2005}.  Naively stated, the nerves of steel conjecture aims to strengthen the connection between tensor triangular geometry and ``nilpotence theorems.''  In order to make this precise, we must first make a few definitions.
\begin{definition}(\cite[~Remark 3.4]{Balmer2020})
To any (essentially small) symmetric monoidal stable $\infty$-category $\mc{C}$, the homological spectrum $\mo{Spc}^h(\mc{C})$ is defined with points being maximal Serre $\otimes$-ideals in the (abelian) category of finitely presented objects of the Yoneda category~$\mo{Fun}^{add}(\mc{C}^{op},\mo{Ab})$.
\end{definition}
There is a natural comparison map $\mo{Spc}^h(\mc{C})\to\mo{Spc}(\mc{C})$ from the homological spectrum of $\mc{C}$ to its Balmer spectrum.  In \cite[~Corollary 3.9]{Balmer2020}, Balmer proves that when $\mc{C}$ is rigid, the comparison map is surjective, and that the homological spectrum is essentially determined by a property of detecting nilpotence in a certain precise sense (see \cite[~Corollary 4.7]{Balmer2020} and \cite[~Theorem 5.4]{Balmer2020}).

Before proceeding, let's introduce some definitions which will be used throughout the paper.
\begin{definition}\label{def1.2}
	When discussing a \textit{symmetric monoidal stable $\infty$-category} $\mc{C}$, we will implicitly assume that $\mc{C}$ is essentially small and idempotent complete.  That is, we identify the $\infty$-category of symmetric monoidal stable $\infty$-categories with the category $2\text{-}\mo{Ring}=\mo{CAlg}(\mo{Cat}^{\mo{st}}_{\infty})$ from \cite[~Definition 2.14]{mathew2016galoisgroupstablehomotopy}.
\end{definition}
\begin{definition}
	We say that a symmetric monoidal stable $\infty$-category $\mc{C}$ is \textit{local} if it is nonzero, and given $x,y\in\mc{C}$ with $x\otimes y\simeq 0$, then either $x$ or $y$ is $\otimes$-nilpotent (if $\mc{C}$ is rigid, this is equivalent to saying that either $x\simeq 0$ or $y\simeq 0$).
\end{definition}
Only in \textsection 2 will we deal with idempotent-complete symmetric monoidal stable $\infty$-categories which are possibly not rigid.  Thus, instead of repeating ``rigid symmetric monoidal stable $\infty$-category'' all the time, we will simply use the term $\otimes$-triangular category, or tt-category, to indicate rigidity.  
\begin{definition}
	The term \textit{tt-category} will refer to an idempotent-complete rigid symmetric monoidal stable $\infty$-category, identifying the $\infty$-category of tt-categories with the full subcategory $$\mo{CAlg}(\mo{Cat}^{\mo{st}}_{\infty})^{\mo{rig}}\subseteq \mo{CAlg}(\mo{Cat}^{\mo{st}}_{\infty}).$$ A \textit{local tt-category} will mean a tt-category which is also local.
\end{definition}

In \cite[~Remark 5.15]{Balmer2020}, Balmer had the nerves of steel to avoid conjecturing that the comparison map between the homological spectrum and the Balmer spectrum of a tt-category $\mc{T}$ is always an isomorphism.  Despite his best efforts, this statement has come to be known as Balmer's ``nerves of steel conjecture.''  The problem we study takes a slightly different form, but is in much the same spirit.  To state it, we require one last definition.
\begin{definition}\label{def1.1}
We say that the \textit{exact-nilpotence condition} holds for a local tt-category $\mc{T}$ if whenever we have a fiber sequence $$y\xrightarrow{g}\unit\xrightarrow{f}x,$$ there exists a nonzero object $z\in \mc{T}$ such that either $z\otimes g$ or $z\otimes f$ is $\otimes$-nilpotent.
\end{definition}

The connection between this property and the nerves of steel conjecture follows from further work of Balmer summarized in the following theorem.
\begin{theorem}[\cite{balmer2020homological}]\label{th1.2}
The nerves of steel conjecture holds if and only if the exact-nilpotence condition holds for every local tt-category~$\mc{T}$.
\end{theorem}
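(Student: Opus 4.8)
The plan is to prove the equivalence by assembling results of Balmer on the comparison map $\rho_{\mc{T}}\colon\mo{Spc}^h(\mc{T})\to\mo{Spc}(\mc{T})$; recall that the nerves of steel conjecture asserts that $\rho_{\mc{T}}$ is bijective (equivalently, an isomorphism) for every tt-category $\mc{T}$. Since $\rho_{\mc{T}}$ is always surjective when $\mc{T}$ is rigid \cite[Corollary~3.9]{Balmer2020}, the conjecture amounts to the injectivity of $\rho_{\mc{T}}$ for all $\mc{T}$. First I would reduce to local tt-categories: given a prime $\mc{P}\in\mo{Spc}(\mc{T})$, localising $\mc{T}$ at $\mc{P}$ yields a local tt-category $\mc{L}$ whose unique closed point $\mc{m}$ is the zero ideal, and the compatibility of the homological spectrum with tt-localisations (\cite{balmer2020homological}) identifies the fibre $\rho_{\mc{T}}^{-1}(\mc{P})$ with $\rho_{\mc{L}}^{-1}(\mc{m})$; an arbitrary prime of a local tt-category is then handled by localising once more. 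So it suffices to analyse, for a local tt-category $\mc{L}$ with Yoneda embedding $h\colon\mc{L}\to A(\mc{L})$ into its abelian Yoneda category, the locus where $\rho_{\mc{L}}^{-1}(\mc{m})$ is a single point.

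The main device is the homological residue field functor $\bar h_{\mc{B}}\colon\mc{L}\to A(\mc{L})/\mc{B}$ attached to a homological prime $\mc{B}$, which is homological and monoidal. Given a fibre sequence $y\xrightarrow{g}\unit\xrightarrow{f}x$ in $\mc{L}$, it produces an exact sequence $\bar h_{\mc{B}}(y)\to\unit\to\bar h_{\mc{B}}(x)$; using that the tensor unit is a simple object of the homological residue field \cite{balmer2020homological} (so a nonzero morphism out of it is monic), one checks that for each $\mc{B}$ exactly one of $\bar h_{\mc{B}}(f)$ and $\bar h_{\mc{B}}(g)$ vanishes. Call the two loci $V_f$ and $V_g$, so $\mo{Spc}^h(\mc{L})=V_f\sqcup V_g$; here $\mc{B}\in V_f$ iff $\coker(h(g))\in\mc{B}$, and dually $\mc{B}\in V_g$ iff the corresponding cokernel associated with $g$ lies in $\mc{B}$. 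Since $\rho_{\mc{L}}$ carries the homological support of a finitely presented functor to a Thomason-closed subset of $\mo{Spc}(\mc{L})$ (a result of Balmer \cite{balmer2020homological}), whenever $\rho_{\mc{L}}$ is injective this partition corresponds to a partition of $\mo{Spc}(\mc{L})$ into two complementary Thomason-closed subsets, hence into two open-and-closed subsets. Finally, by Balmer's homological nilpotence theorem \cite{balmer2020homological} together with the faithfulness of tensoring by a nonzero object in a homological residue field, a nonzero $z\in\mc{L}$ makes $z\otimes f$ (resp.\ $z\otimes g$) $\otimes$-nilpotent exactly when $\Supp^h(z)\subseteq V_f$ (resp.\ $\Supp^h(z)\subseteq V_g$).

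Now I treat the two directions. If the nerves of steel conjecture holds, then $\rho_{\mc{L}}$ is bijective for every local $\mc{L}$, so $\mo{Spc}^h(\mc{L})\cong\mo{Spc}(\mc{L})$ is connected (a local tt-category has a unique closed point); hence one of $V_f$, $V_g$ is empty, so $f$ or $g$ is itself $\otimes$-nilpotent, and $z=\unit$ verifies the exact-nilpotence condition --- indeed in its strongest form. Conversely, assume the exact-nilpotence condition for every local tt-category and suppose, for contradiction, that some $\rho_{\mc{T}}$ is not injective; passing to a local $\mc{L}$ as above we obtain distinct homological primes $\mc{B}_1\neq\mc{B}_2$ in $\rho_{\mc{L}}^{-1}(\mc{m})$. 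From the identity $\rho_{\mc{L}}^{-1}(\mc{m})=\bigcap_{0\neq w\in\mc{L}}\Supp^h(w)$ one gets $\rho_{\mc{L}}^{-1}(\mc{m})\subseteq\Supp^h(z)$ for every nonzero $z$, so no subset of $\mo{Spc}^h(\mc{L})$ omitting $\mc{B}_1$ or $\mc{B}_2$ contains $\Supp^h(z)$. Because $\mc{B}_1,\mc{B}_2$ lie over $\mc{m}$ they agree on representable functors, so $\mc{B}_1\setminus\mc{B}_2$ contains some object $N=\coker(h(\phi))$; the crucial step is to use rigidity to replace $\phi$ by a morphism $g\colon y\to\unit$ such that the partition $\mo{Spc}^h(\mc{L})=V_f\sqcup V_g$ of the resulting fibre sequence $y\xrightarrow{g}\unit\xrightarrow{f}x$ places $\mc{B}_1$ and $\mc{B}_2$ on opposite sides. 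Then neither $V_f$ nor $V_g$ contains $\rho_{\mc{L}}^{-1}(\mc{m})$, so no nonzero $z$ makes $z\otimes f$ or $z\otimes g$ $\otimes$-nilpotent --- contradicting the exact-nilpotence condition for $\mc{L}$.

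The step I expect to be the main obstacle is this last construction: from the bare inequality $\mc{B}_1\neq\mc{B}_2$ one must manufacture an honest fibre sequence \emph{based at the tensor unit} that detects the difference between the two homological primes. This is exactly where rigidity is essential --- to trade the presentation $\coker(h(\phi))$, $\phi\colon a\to b$, for the cokernel of $h$ applied to the adjunct morphism $a\otimes b^{\vee}\to\unit$ of $\phi$ --- and it requires controlling the error terms thereby introduced, namely cokernels of $h$ applied to evaluation morphisms $b^{\vee}\otimes b\to\unit$: one must check these lie in $\mc{B}_1\cap\mc{B}_2$ (which holds because evaluation morphisms become epimorphisms in the homological residue fields), so that the cokernel one is left with still separates $\mc{B}_1$ from $\mc{B}_2$, and that the induced decomposition of $\mo{Spc}^h(\mc{L})$ is open-and-closed. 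This bookkeeping inside the Freyd envelope, made possible by rigidity, is the technical heart of Balmer's argument, and is precisely why the nilpotence condition must be formulated in terms of a fibre sequence rather than a single morphism. A secondary but genuine subtlety, used in the first paragraph, is the localisation-compatibility of the homological spectrum.
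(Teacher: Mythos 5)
Your proposal does not follow the paper's route: the paper simply quotes Balmer's Theorem A.1 of \cite{balmer2020homological} (nerves of steel $\iff$ in every local tt-category, any two morphisms $f,g$ with $f\otimes g\simeq 0$ admit a nonzero $z$ making $z\otimes f$ or $z\otimes g$ $\otimes$-nilpotent) and then performs a two-line translation between that condition and the fiber-sequence condition, using rigidity to base $f,g$ at $\unit$ and the factorization of $g$ through $\mo{fib}(f)\otimes y\to y$. You instead attempt to reprove the hard content of Theorem A.1 from scratch via homological residue fields, and this is where the gaps lie. The load-bearing assertion that $\bar{h}_{\mc{B}}(\unit)$ is a \emph{simple} object of $A(\mc{L})/\mc{B}$ is not a result of \cite{Balmer2020} or \cite{balmer2020homological} and is not proved in your sketch; without it you only get that $\bar{h}_{\mc{B}}(f)$ and $\bar{h}_{\mc{B}}(g)$ do not \emph{both} vanish, i.e.\ the loci $V_f,V_g$ are disjoint but need not cover $\mo{Spc}^h(\mc{L})$. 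Since the covering is exactly what both of your directions use (in the forward direction, ``one of $V_f,V_g$ is empty, hence the other is everything, hence $f$ or $g$ is nilpotent''; in the converse, the separation of $\mc{B}_1$ from $\mc{B}_2$ by the partition), the argument does not go through as written. The auxiliary clopen-ness step is also unsupported: the topology on $\mo{Spc}^h$ is generated by supports of representables, and there is no quotable statement that $\rho$ sends the support of an arbitrary finitely presented functor such as $\coker(h(g))$ to a Thomason-closed subset.

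The converse direction has a second, independent gap, which you yourself flag as the main obstacle: manufacturing from $\mc{B}_1\neq\mc{B}_2$ over the closed point a fiber sequence based at $\unit$ whose partition separates them. Your proposed mechanism --- trade $\phi\colon a\to b$ for its adjunct $a\otimes b^{\vee}\to\unit$ and absorb the error terms because ``evaluation morphisms become epimorphisms in the homological residue fields'' --- fails in general: if $h(b)\in\mc{B}$ then $\bar{h}_{\mc{B}}(b^{\vee}\otimes b)=0$ cannot surject onto the nonzero unit, so $\coker h(\mathrm{ev}_b)\notin\mc{B}$ and the bookkeeping breaks exactly in the cases you need to control. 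Finally, the criterion ``$z\otimes f$ is $\otimes$-nilpotent iff $\Supp^h(z)\subseteq V_f$'' is only half available: the ``if'' direction follows from Balmer's nilpotence theorem and monoidality of $\bar{h}_{\mc{B}}$, but the ``only if'' direction (used to derive your contradiction) would require that homological residue fields carry no nonzero $\otimes$-nilpotent morphisms out of the unit and that nonzero objects are $\otimes$-faithful on morphisms, neither of which you may cite as known. In short, the statement is correct, but your proof rests on unproved structural properties of homological residue fields that are precisely the depth hidden in the theorem of Balmer which the paper invokes; either prove those properties or, as the paper does, reduce to \cite[Theorem A.1]{balmer2020homological} and handle only the passage between pairs $f\otimes g\simeq 0$ and fiber sequences through the unit.
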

\begin{proof}
It follows from \cite[~Theorem A.1]{balmer2020homological} that the nerves of steel conjecture is equivalent to the statement that in any local tt-category $\mc{T}$, given any morphisms $f,g$ with $f\otimes g\simeq0$, there is some nonzero object $z$ with $z\otimes f$ or $z\otimes g$ is $\otimes$-nilpotent.  Assuming the nerves of steel conjecture, given any fiber sequence as in Definition~\ref{def1.1} in a local tt-category $\mc{T}$, one has that $g\otimes f\simeq 0$, thus we find there exists some nonzero object $z\in\mc{T}$ with $z\otimes f$ or $z\otimes g$ $\otimes$-nilpotent, thus the exact-nilpotence condition holds.  Conversely, assume the exact-nilpotence condition always holds, and take any $g,f\in \mc{T}$ with~$f\otimes g\simeq 0$.  We may assume without loss of generality that $g:\unit\to y$ and~$f:\unit\to x$.  Then, $(f\otimes id_{y})\circ g\simeq 0$, so that $g$ factors over $\mo{fib}(f)\otimes y\to y$, and the exact-nilpotence condition applied to the fiber sequence $\mo{fib}(f)\to \unit\to x$ proves the claim.
\end{proof}

In \textsection 2, we will show that if one drops the requirement that $\mc{T}$ be rigid, the exact-nilpotence condition can fail.  The nerves of steel conjecture was formulated only in the rigid case, and the connection between the homological spectrum and the exact-nilpotence condition is also a feature of the rigid case.  Nevertheless, these examples provide clues to how the exact-nilpotence condition could potentially fail.  We proceed by looking at free constructions.  In particular, we prove,
\begin{theorem}
If $\mc{T}$ is the subcategory of compact objects in the free stably symmetric monoidal stable $\infty$-category on an object with a map from the unit over any local tt-category, then $\mc{T}$ is a local category for which the exact-nilpotence condition is false.
\end{theorem}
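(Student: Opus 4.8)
My plan is to derive the failure formally from the freeness of $\mc{T}$ over the local \emph{and rigid} category $\mc{C}$, by comparing $\mc{T}$ with the free symmetric monoidal stable $\infty$-category on a single object. I will use the universal property of $\mc{T}$ directly: $\mc{T}$ is the free symmetric monoidal stable $\infty$-category on a pair $(x,f\colon\unit\to x)$ over $\mc{C}$ (equivalently, the compact objects of the free presentable one, as in the statement, but the $2$-ring universal property is all I need). Set $y:=\operatorname{fib}(f)$, with structure map $g\colon y\to\unit$, and $\bar x:=\cofib(f)\simeq\Sigma y$, so that $y\xrightarrow{g}\unit\xrightarrow{f}x$ is a fiber sequence in $\mc{T}$; this will be the fiber sequence witnessing the failure. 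Since $(x,f\colon\unit\to x)$ and $(y,g\colon y\to\unit)$ determine one another through $x=\cofib(g)$ and $y=\operatorname{fib}(f)$ (rotation of (co)fiber sequences), $\mc{T}$ is equally the free symmetric monoidal stable $\infty$-category on $(y,g\colon y\to\unit)$ over $\mc{C}$.

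Let $\mc{C}\langle x\rangle$, resp.\ $\mc{C}\langle y\rangle$, denote the free symmetric monoidal stable $\infty$-category on one object over $\mc{C}$, with universal object $\tilde x$, resp.\ $\tilde y$. As a stable $\infty$-category, $\mc{C}\langle x\rangle$ decomposes into ``degree $n$'' pieces with $\unit$ in degree $0$ and $\tilde x$ in degree $1$, so $\mathrm{Map}_{\mc{C}\langle x\rangle}(\unit,\tilde x)\simeq 0$; hence the universal property of $\mc{T}$ produces a $\mc{C}$-linear symmetric monoidal functor $\Psi\colon\mc{T}\to\mc{C}\langle x\rangle$ with $\Psi(x)\simeq\tilde x$ and $\Psi(f)\simeq 0$, and symmetrically $\Psi'\colon\mc{T}\to\mc{C}\langle y\rangle$ with $\Psi'(y)\simeq\tilde y$ and $\Psi'(g)\simeq 0$. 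The crucial point is that $\Psi$ (and likewise $\Psi'$) \emph{reflects zero objects}: it is identified with the associated graded functor of the exhaustive multiplicative filtration of $\mc{T}$ by the thick $\otimes$-subcategories $F_{\le k}\subseteq\mc{T}$ generated by the $c\otimes x^{\otimes j}$ with $c\in\mc{C}$, $j\le k$ --- indeed $F_{\le 0}=\mc{C}$, $F_{\le m}\otimes F_{\le n}\subseteq F_{\le m+n}$, and both $\Psi$ and $\operatorname{gr}$ are $\mc{C}$-linear symmetric monoidal functors sending $(x,f)$ to $(\tilde x,0)$, so they agree --- and every object of $\mc{T}$, being built from finitely many $c\otimes x^{\otimes j}$, lies in some $F_{\le k}$, so $\Psi(t)\simeq 0$ forces $t\simeq 0$ by descending induction along the filtration.

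I then need two facts about $\mc{C}\langle x\rangle$ (and the same for $\mc{C}\langle y\rangle$): $(i)$ it has no nonzero zero divisors, and $(ii)$ every nonzero object has nonzero tensor powers. For these, recall $\mc{C}\langle x\rangle=\bigoplus_{n\ge 0}\mc{C}\langle x\rangle_n$ with $\mc{C}\langle x\rangle_0=\mc{C}$ and $\mc{C}\langle x\rangle_n$ the perfect modules over the group algebra $\unit[\Sigma_n]$ in $\mc{C}$, which carries a conservative symmetric monoidal forgetful functor to $\mc{C}$ (restriction to the trivial subgroup); the tensor product restricts to $\mc{C}\langle x\rangle_m\otimes\mc{C}\langle x\rangle_n\to\mc{C}\langle x\rangle_{m+n}$ given by external product followed by $\Ind_{\Sigma_m\times\Sigma_n}^{\Sigma_{m+n}}$, and this $\Ind$ is conservative because $\Res^{\Sigma_{m+n}}_{\Sigma_m\times\Sigma_n}\Ind^{\Sigma_{m+n}}_{\Sigma_m\times\Sigma_n}$ contains the identity as a retract (the trivial double coset). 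Given $P\otimes Q\simeq 0$ with $P,Q\ne 0$, let $d$, $e$ be the least degrees in which $P$, $Q$ are nonzero; the degree-$(d+e)$ part of $P\otimes Q$ is $\Ind(P_d\boxtimes Q_e)$, so $P_d\boxtimes Q_e\simeq 0$, so the underlying objects of $P_d$, $Q_e$ in $\mc{C}$ have vanishing tensor product, so by locality \emph{and} rigidity of $\mc{C}$ one of them vanishes, hence one of $P_d$, $Q_e$ vanishes --- a contradiction, which is $(i)$; the same leading-term computation applied to $P^{\otimes n}$, using that a nonzero object of the rigid local category $\mc{C}$ has nonzero tensor powers, gives $(ii)$.

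The conclusion is now formal. First, $\mc{T}\ne 0$ since $\Psi(\unit)\simeq\unit\ne 0$; and if $a\otimes b\simeq 0$ in $\mc{T}$, then $\Psi(a)\otimes\Psi(b)\simeq 0$, so by $(i)$ one of $\Psi(a),\Psi(b)$ vanishes, so by the reflection of zeros one of $a,b$ vanishes --- thus $\mc{T}$ is local (indeed has no nonzero zero divisors). For the exact-nilpotence condition, fix an arbitrary nonzero $z\in\mc{T}$. Applying $\Psi$ to $y\xrightarrow{g}\unit\xrightarrow{f}x$ with $\Psi(f)\simeq 0$ gives $\Psi(y)\simeq\unit\oplus\Sigma^{-1}\tilde x$ with $\Psi(g)$ the canonical split epimorphism onto $\unit$; hence $\Psi\big((z\otimes g)^{\otimes n}\big)\simeq\Psi(z)^{\otimes n}\otimes\Psi(g)^{\otimes n}$ is a split epimorphism onto $\Psi(z)^{\otimes n}$, which is nonzero for all $n$ by $(ii)$ (as $\Psi(z)\ne 0$), so $(z\otimes g)^{\otimes n}\ne 0$ for all $n$ and $z\otimes g$ is not $\otimes$-nilpotent. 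Symmetrically, applying $\Psi'$ to the same sequence with $\Psi'(g)\simeq 0$ gives $\Psi'(x)\simeq\unit\oplus\Sigma\tilde y$ with $\Psi'(f)$ a split monomorphism, so $\Psi'\big((z\otimes f)^{\otimes n}\big)$ is a split monomorphism out of the nonzero object $\Psi'(z)^{\otimes n}$, hence nonzero, so $z\otimes f$ is not $\otimes$-nilpotent. Since $z$ was arbitrary, no nonzero $z$ makes $z\otimes f$ or $z\otimes g$ $\otimes$-nilpotent, so the exact-nilpotence condition fails. The step I expect to be most delicate is what is glossed in the second and third paragraphs: rigorously identifying $\Psi$ and $\Psi'$ with associated graded functors (and deducing that they reflect zero objects), and pinning down the $\Sigma_n$-equivariant structure of $\mc{C}\langle x\rangle$ --- the graded decomposition, the description of its tensor product by induction, and the conservativity of induction. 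Everything downstream of those inputs is routine.
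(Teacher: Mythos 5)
Your overall strategy is the same as the paper's: compare $\mc{T}$ with the free-on-one-object category $\mc{C}\langle x\rangle$ (the paper's $\mc{C}[X]$), use its decomposition into $\Sigma_n$-equivariant pieces, the description of Day convolution via $\Ind_{\Sigma_m\times\Sigma_n}^{\Sigma_{m+n}}$ and the conservativity of that induction to get locality, and then detect non-nilpotence of $f$ and $g$ by split maps in the graded category. Where you genuinely differ is the choice of comparison functor: the paper uses the restriction functor $\Res\colon\mc{C}[\mo{Poi}]\to\mc{C}[X]$ along $\mo{Fin}^{\simeq}\hookrightarrow\mo{Fin}^{inj}$ (and handles $g$ by the duality $\mc{C}^{op}[\mo{Poi}]\simeq\mc{C}[\mo{Poi}]^{op}$), whereas you use the universally defined functors $\Psi,\Psi'$ classifying the pointed objects $(\tilde x,0)$ and $(\tilde y,0)$. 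That trade has a cost: $\Res$ is conservative for trivial reasons (it does not change the values of a presheaf; the paper's work goes into upgrading its lax monoidal structure to a strong one), while for your $\Psi$ conservativity is a substantive claim --- note $\Psi\neq\Res$, e.g.\ $\Psi(\cofib(f))\simeq\tilde x\oplus\Sigma\unit$ while $\Res(\cofib(f))\simeq\tilde x$, so $\Psi$ is a derived ``indecomposables'' functor, exactly the kind of functor for which reflection of zero is a derived Nakayama statement rather than an inspection.

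This is where your proposal has a genuine gap, and you flag it yourself. The sentence ``both $\Psi$ and $\operatorname{gr}$ are $\mc{C}$-linear symmetric monoidal functors sending $(x,f)$ to $(\tilde x,0)$, so they agree'' has no content until $\operatorname{gr}$ has actually been \emph{constructed} as a symmetric monoidal exact $\mc{C}$-linear functor $\mc{T}\to\mc{C}\langle x\rangle$: the Verdier quotients $F_{\le k}/F_{\le k-1}$ do not assemble into such a functor by themselves, and identifying $F_{\le k}/F_{\le k-1}$ with the degree-$k$ piece $\mo{Fun}(B\Sigma_k,\mo{Ind}(\mc{C}))^{\omega}$, proving $F_{\le m}\otimes F_{\le n}\subseteq F_{\le m+n}$ with the induced monoidal structure on the associated graded, and proving the inductive step ``$\Psi(t)\simeq0$ and $t\in F_{\le k}$ imply $t\in F_{\le k-1}$'' all require essentially the explicit model of $\mc{T}$ as compact presheaves on finite sets and injections --- the model you set out to avoid by ``only using the $2$-ring universal property.'' The claim is in fact true and your argument is completable: in the presheaf model a compact object $t$ has $t([n])\simeq0$ for $n$ large, the projection $\unit\oplus\tilde x\to\tilde x$ is a map of pointed objects and hence induces a monoidal natural transformation $\Res\Rightarrow\Psi$ which one checks is an equivalence in the top degree, and downward induction then gives conservativity. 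But once you invoke that model you may as well use $\Res$ itself, which is the paper's route; as written, the central step of your proof (that $\Psi$ and $\Psi'$ reflect zero objects) is asserted rather than proved.
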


On the positive side, we prove in \textsection 3 that the exact-nilpotence condition holds for a large class of local tt-categories which are generated by their unit.  The class of such tt-categories is closed under filtered colimits with local transition maps.  We will discuss many categories where the condition is known to hold, and add onto this by showing, for instance:
\begin{proposition}
	If $R$ is a connective $\bb{E}_{\infty}$-ring such that $\pi_0(R)$ is a local ring, then $\mc{C}=\mo{Mod}_{R}^{perf}$ is a local tt-category.  If $\pi_*(R)$ is a Noetherian ring, then the exact-nilpotence condition holds for~$\mc{C}$.
\end{proposition}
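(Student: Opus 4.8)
Since the two assertions are of rather different natures, I would prove them separately. Locality is essentially a ``derived Nakayama'' statement, which I would deduce from ordinary Nakayama over the residue field together with the Künneth formula. For the exact-nilpotence condition I would base-change the given fibre sequence to each residue field of $\pi_0(R)$, note that over a field one of the two maps always vanishes, and then use the Koszul complex on a generating set of the maximal ideal as the witness object $z$, invoking the nilpotence-detection results of \textsection 3 to pass from ``vanishes under every residue-field functor'' to ``$\otimes$-nilpotent''.

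\emph{Locality.} As $R$ is an $\bb{E}_\infty$-ring, $\mc{C}=\mo{Mod}_R^{perf}$ — the category of perfect $=$ compact $=$ dualizable $R$-modules — is an essentially small, idempotent-complete, rigid symmetric monoidal stable $\infty$-category, hence a tt-category; it is nonzero because $\pi_0(R)\neq 0$. Write $\mf{m}\subseteq\pi_0(R)$ for the maximal ideal and $\kappa=\pi_0(R)/\mf{m}$, viewed as an $R$-module via $R\to\pi_0(R)\to\kappa$. First I would prove that for $M\in\mc{C}$ one has $M\simeq 0$ if and only if $M\otimes_R\kappa\simeq 0$: if $M\not\simeq 0$, then $M$ is bounded below, being perfect over a connective ring, so after a shift $M$ is connective with $\pi_0(M)\neq 0$; base change sends perfect $R$-modules to perfect $\pi_0(R)$-modules, so $\pi_0(M)=\pi_0(M\otimes_R\pi_0(R))$ is a finitely generated $\pi_0(R)$-module, and ordinary Nakayama gives $\pi_0(M\otimes_R\kappa)=\pi_0(M)\otimes_{\pi_0(R)}\kappa\neq 0$. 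Now given $M\otimes_R N\simeq 0$ in $\mc{C}$, associativity of the relative tensor product together with the fact that $N\otimes_R\kappa$ is a $\kappa$-module yields $(M\otimes_R\kappa)\otimes_\kappa(N\otimes_R\kappa)\simeq(M\otimes_R N)\otimes_R\kappa\simeq 0$; since homotopy of a tensor product over the field $\kappa$ is given by Künneth, one of the factors is $0$, and the derived Nakayama statement then forces $M\simeq 0$ or $N\simeq 0$. Hence $\mc{C}$ is local.

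\emph{The exact-nilpotence condition.} Assume in addition that $\pi_*(R)$ is Noetherian; then $\mf{m}$ has a finite generating set $a_1,\dots,a_n$, and one has $\Spc(\mc{C})\cong\Spec(\pi_0(R))$, with $\supp(M)=\{\mf{p}:M\otimes_R\kappa(\mf{p})\not\simeq 0\}$. Given a fibre sequence $y\xrightarrow{g}\unit\xrightarrow{f}x$ in $\mc{C}$, I would base-change along $R\to\kappa(\mf{p})$ for each prime $\mf{p}\subseteq\pi_0(R)$: in $\mo{Mod}_{\kappa(\mf{p})}^{perf}$ every object is a finite sum of shifts of the unit, so the map $\kappa(\mf{p})\to x\otimes_R\kappa(\mf{p})$, being classified by an element of $\pi_0(x\otimes_R\kappa(\mf{p}))$, is either zero or a split monomorphism, and in the latter case the fibre sequence splits and $g\otimes_R\kappa(\mf{p})=0$. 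Thus $\Spec(\pi_0(R))=U_f\cup U_g$, where $U_f=\{\mf{p}:f\otimes_R\kappa(\mf{p})=0\}$ and $U_g=\{\mf{p}:g\otimes_R\kappa(\mf{p})=0\}$. Let $z$ be the Koszul complex on $a_1,\dots,a_n$; it is perfect and nonzero, and $\supp(z)=\bigcap_i V(a_i)=V(\mf{m})=\{\mf{m}\}$ (support carries $\otimes$ to intersection, and $\supp(\cofib(\unit\xrightarrow{a_i}\unit))=V(a_i)$). Since $\mf{m}$ lies in $U_f$ or in $U_g$, the morphism $z\otimes f$ (respectively $z\otimes g$) base-changes to $0$ along every residue field — for $\mf{p}\neq\mf{m}$ because $z\otimes_R\kappa(\mf{p})\simeq 0$, and for $\mf{p}=\mf{m}$ by the choice of $U_f$ (resp.\ $U_g$). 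The nilpotence-detection results of \textsection 3 then say that a morphism of $\mc{C}$ killed by every residue-field functor is $\otimes$-nilpotent, so $z\otimes f$ or $z\otimes g$ is $\otimes$-nilpotent, which is the required conclusion.

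\emph{Main obstacle.} The formal manipulations above are routine; the real content is the statement imported from \textsection 3 that for a connective $\bb{E}_\infty$-ring $R$ with $\pi_*(R)$ Noetherian the residue-field functors $(-)\otimes_R\kappa(\mf{p})$ jointly detect $\otimes$-nilpotence in the unit-generated category $\mo{Mod}_R^{perf}$ — equivalently, that its homological spectrum is all of $\Spec(\pi_0(R))$. I expect this to be where the difficulty and the Noetherian hypothesis reside, presumably handled by exhibiting $\mo{Mod}_R^{perf}$ as a filtered colimit with local transition maps of module categories over rings for which the statement is classical (Balmer's computation of the homological spectrum of $\mathrm{Perf}$ of a Noetherian ring) and using the closure of the class in \textsection 3 under such colimits. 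An alternative packaging would bypass the Koszul argument altogether and simply check that $\mo{Mod}_R^{perf}$ lies in the class of \textsection 3, where the exact-nilpotence condition is established directly.
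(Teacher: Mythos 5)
Your locality argument is fine and is a mild variant of the paper's (the paper reduces along $-\otimes_R\pi_0(R)$ to the known locality of perfect complexes over the local ring $\pi_0(R)$, via the same bottom-homotopy-group observation; your route through the residue field and K\"unneth works equally well). The exact-nilpotence half, however, has a genuine gap: the principle you import at the end --- that a morphism of $\mo{Mod}_R^{perf}$ killed by $-\otimes_R\kappa(\mf{p})$ for every prime $\mf{p}\subseteq\pi_0(R)$ is $\otimes$-nilpotent --- is false in the stated generality, and \textsection 3 does not supply it: the residue fields constructed there exist only for finitely presented rational $\bb{E}_\infty$-algebras, and the entry of Theorem 3.15 covering connective $R$ with Noetherian $\pi_*$ is deduced from this very proposition, so your ``alternative packaging'' is circular. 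Concretely, take $R=ku_{(p)}$, so that $\pi_0(R)=\bb{Z}_{(p)}$ is local and $\pi_*(R)=\bb{Z}_{(p)}[\beta]$ is Noetherian, and the fiber sequence $\Sigma^2 R\xrightarrow{\;\beta\;}\unit\xrightarrow{\;f\;}R/\beta$. The residue-field functors are $-\otimes_R H\bb{Q}$ and $-\otimes_R H\bb{F}_p$, and your witness is the Koszul object $z=R/p$ on the generator of $\mf{m}\subseteq\pi_0(R)$. Then $z\otimes\beta$ is indeed killed by both residue fields ($z\otimes_R H\bb{Q}\simeq 0$, and $\beta\otimes_R H\bb{F}_p$ is multiplication by the image of $\beta$ in $\pi_2(H\bb{F}_p)=0$), yet it is not $\otimes$-nilpotent: $(z\otimes\beta)^{\otimes n}$ is multiplication by $\beta^{n}$ on $(R/p)^{\otimes_R n}$, and $\beta$ acts injectively on the nonzero graded module $\pi_*\bigl((R/p)^{\otimes_R n}\bigr)$, an iterated extension of free $\bb{F}_p[\beta]$-modules. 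For the same $R$ your assertion $\Spc(\mc{C})\cong\Spec(\pi_0(R))$ also fails: the primes pulled back from $KU_{(p)}$-modules and from $H\bb{Z}_{(p)}$-modules are distinct.

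The hypothesis is Noetherianness of $\pi_*(R)$, not of $\pi_0(R)$, precisely so that one can take $z$ to be the Koszul-type object on a finite generating set $a_1,\dots,a_n$ of the graded maximal ideal of $\pi_*(R)$ (in the example, on $p$ \emph{and} $\beta$), namely $z=\bigotimes_{R}\mo{cofib}(\Sigma^{|a_i|}R\xrightarrow{a_i}R)$. Then $\pi_*(z)$ is finitely generated over $\pi_*(R)/(a_1^2,\dots,a_n^2)$, so $z$ has only finitely many homotopy groups, each admitting a finite filtration with $k$-vector-space graded pieces. After dualizing so that $g\otimes_R k\simeq 0$ (your dichotomy, but needed only at the closed point), the finite Postnikov filtration of $z$ together with the standard extension argument for tensor-nilpotence shows directly that $z\otimes g$ is $\otimes$-nilpotent; no family of residue fields, no identification of $\Spc(\mc{C})$, and no detection theorem enters the argument.
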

This extends to showing that the exact-nilpotence condition holds for the category $\mo{Mod}_R^{perf}$ over any connective rational $\bb{E}_{\infty}$-ring $R$ with $\pi_0(R)$ a local ring, the proof of which takes the majority of the section.

In the final section, we slightly strengthen Theorem~\ref{th1.2}, to the following claim
\begin{theorem}
The following are equivalent,
\begin{enumerate}
	\item The nerves of steel conjecture holds.
	\item For every local tt-category $\mc{T}$, the exact-nilpotence condition holds.
	\item There exists an integer $n$ such that for every local tt-category $\mc{T}$, and any fiber sequence as in Definition~\ref{def1.1}, there exists a nonzero object $z\in\mc{T}$ such that either $z\otimes g^{\otimes n}\simeq 0$ or~$z\otimes f^{\otimes n}\simeq 0$.
\end{enumerate}
\end{theorem}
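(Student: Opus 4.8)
By Theorem~\ref{th1.2}, statements (a) and (b) are equivalent, so it suffices to prove the equivalence of (b) and (c). The implication (c)$\,\Rightarrow\,$(b) is immediate: if $n$ works in (c) and, for a given fiber sequence, $z$ is a nonzero object with, say, $z\otimes g^{\otimes n}\simeq 0$, then $(z\otimes g)^{\otimes n}\simeq z^{\otimes n}\otimes g^{\otimes n}\simeq z^{\otimes(n-1)}\otimes(z\otimes g^{\otimes n})\simeq 0$, since tensoring a null morphism with any object gives a null morphism; hence $z\otimes g$ is $\otimes$-nilpotent and $z$ witnesses the exact-nilpotence condition.

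For (b)$\,\Rightarrow\,$(c) the plan is a compactness (ultraproduct) argument. Suppose (c) fails: for each $n\ge 1$ choose a local tt-category $\mc{T}_n$ and a fiber sequence $y_n\xrightarrow{g_n}\unit\xrightarrow{f_n}x_n$ in $\mc{T}_n$ such that no nonzero $z\in\mc{T}_n$ satisfies $z\otimes f_n^{\otimes n}\simeq 0$ or $z\otimes g_n^{\otimes n}\simeq 0$, and fix a non-principal ultrafilter $\mc{U}$ on $\mathbb{N}$. The product $\prod_n\mc{T}_n$ is again a tt-category, and the objects $(w_n)_n$ with $\{n:w_n\simeq 0\}\in\mc{U}$ form a thick $\otimes$-ideal $\mc{I}_\mc{U}$; I would then set $\mc{T}_\mc{U}:=\bigl((\prod_n\mc{T}_n)/\mc{I}_\mc{U}\bigr)^{\natural}$, the idempotent completion of the Verdier quotient, again a tt-category, with the exact symmetric monoidal localization $L\colon\prod_n\mc{T}_n\to\mc{T}_\mc{U}$. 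Two facts about $L$ would drive everything: (i) for a morphism $\phi=(\phi_n)_n$ in $\prod_n\mc{T}_n$ one has $L(\phi)\simeq 0$ iff $\{n:\phi_n\simeq 0\}\in\mc{U}$ (morphism spaces in the quotient are ultraproducts of morphism spaces); and (ii) $L$ is essentially surjective, i.e.\ every object of $\mc{T}_\mc{U}$ is equivalent to $L(Z)$ for some $Z=(Z_n)_n\in\prod_n\mc{T}_n$. I expect (ii) to be the main obstacle: one must show that an idempotent on $L(Z)$ arises, via (i) applied to endomorphism rings, from a $\mc{U}$-compatible family of idempotents $e_n$ on the $Z_n$, each of which splits because $\mc{T}_n$ is idempotent complete, and that reassembling the summands over $n$ splits it already in the Verdier quotient — reconciling the idempotent completion with the product/ultrafilter structure (with the attendant coherence bookkeeping) is where the work lies.

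Granting (i) and (ii), here is how the argument finishes. $\mc{T}_\mc{U}$ is \emph{local}: it is nonzero since $\{n:\unit_{\mc{T}_n}\simeq 0\}=\varnothing\notin\mc{U}$, and if $x\otimes y\simeq 0$, then writing $x\simeq L(X)$, $y\simeq L(Y)$ by (ii) we get $\{n:X_n\otimes Y_n\simeq 0\}\in\mc{U}$, hence $\{n:X_n\simeq 0\}\cup\{n:Y_n\simeq 0\}\in\mc{U}$ because each $\mc{T}_n$ is local and rigid, and since $\mc{U}$ is an ultrafilter one of these two sets lies in $\mc{U}$, whence $x\simeq 0$ or $y\simeq 0$. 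Applying the exact functor $L$ to the fiber sequences produces a fiber sequence $\bar y\xrightarrow{\bar g}\unit\xrightarrow{\bar f}\bar x$ in $\mc{T}_\mc{U}$, for which the exact-nilpotence condition fails: if $z\in\mc{T}_\mc{U}$ is nonzero and $z\otimes\bar f$ is $\otimes$-nilpotent, say $(z\otimes\bar f)^{\otimes m}\simeq z^{\otimes m}\otimes\bar f^{\otimes m}\simeq 0$, then with $z\simeq L(Z)$ and (i) the set $A=\{n:Z_n^{\otimes m}\otimes f_n^{\otimes m}\simeq 0\}$ lies in $\mc{U}$; for $n\in A$ with $n\ge m$ the morphism $Z_n^{\otimes m}\otimes f_n^{\otimes n}$ factors through $(Z_n^{\otimes m}\otimes f_n^{\otimes m})\otimes\mathrm{id}_{x_n^{\otimes(n-m)}}$ and is therefore null, so by the defining property of $\mc{T}_n$ we get $Z_n^{\otimes m}\simeq 0$ and hence $Z_n\simeq 0$ (a $\otimes$-nilpotent object of a rigid category is zero). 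Since $\mc{U}$ is non-principal it contains the cofinite set $\{n\ge m\}$, so $\{n:Z_n\simeq 0\}\supseteq A\cap\{n\ge m\}\in\mc{U}$; thus $Z\in\mc{I}_\mc{U}$ and $z\simeq L(Z)\simeq 0$, a contradiction. The case in which $z\otimes\bar g$ is $\otimes$-nilpotent is identical, with $g_n$ in place of $f_n$. Hence $\mc{T}_\mc{U}$ is a local tt-category violating the exact-nilpotence condition, contradicting (b).

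The remaining ingredients are routine and I would record them first: functoriality of $\otimes$ (a null morphism stays null after $-\otimes\mathrm{id}$, which also yields $z\otimes f^{\otimes m}\simeq 0\Rightarrow z\otimes f^{\otimes n}\simeq 0$ for $n\ge m$), the vanishing of $\otimes$-nilpotent objects in a rigid symmetric monoidal category, and the closure of the class of tt-categories under products, Verdier quotients by thick $\otimes$-ideals, and idempotent completion. The only genuinely delicate point is (ii).
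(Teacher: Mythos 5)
Your overall strategy is the same as the paper's: reduce to (b)$\Rightarrow$(c) via Theorem~\ref{th1.2}, assume (c) fails, assemble the counterexample categories $\mc{T}_n$ into an ultraproduct, check it is a local tt-category, and contradict (b) using that the nilpotence order is unbounded along the ultrafilter; your locality check and the final contradiction (using that $\mc{U}$ contains all cofinite sets, and that a $\otimes$-nilpotent object of a rigid category vanishes) match the paper's and are correct. The genuine gap is exactly the one you flag: your model of the ultraproduct is the idempotent completion of the Verdier quotient of $\prod_n\mc{T}_n$ by $\mc{I}_{\mc{U}}$, and the whole argument is run ``granting (i) and (ii)'' --- that morphisms die in the quotient iff they die componentwise on a set of $\mc{U}$, and that every object of the completed quotient is equivalent (not merely a retract of) some $L(Z)$. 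Neither is proved, and (ii) is not automatic: Verdier quotients of idempotent-complete categories are in general not idempotent complete, and the lifting strategy you sketch (producing a $\mc{U}$-compatible family of idempotents $e_n$ from a single coherent idempotent downstairs) is the hard route and is not carried out. Without (ii), a nonzero $z\in\mc{T}_{\mc{U}}$ witnessing nilpotence could a priori be a proper retract of some $L(Z)$, and your vanishing argument for $Z$ does not go through.

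The paper sidesteps this entirely by \emph{defining} the ultraproduct as the filtered colimit $\varinjlim_{I\in\mc{U}}\prod_{i\in I}\mc{C}_i$ (Definition~\ref{def4.1}): for a filtered colimit of idempotent-complete stable categories, every object and morphism comes from a finite stage, a morphism is zero in the colimit iff its restriction to some $I\in\mc{U}$ is zero (since $\pi_0$ of mapping spaces commutes with filtered colimits), idempotent completeness and rigidity are inherited, and these are precisely your (i) and (ii). Your gap is fillable by proving that your quotient agrees with this colimit: given data defined over some $I\in\mc{U}$, extend it by zero off $I$; the evident map from the extension of $Z|_I$ into $Z$ has cone supported off $I$, hence lies in $\mc{I}_{\mc{U}}$, and from this one checks that the restriction functors induce an equivalence between the Verdier quotient and $\varinjlim_{I\in\mc{U}}\prod_{i\in I}\mc{T}_i$; in particular the quotient is already idempotent complete, and (i), (ii) follow. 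As written, however, the proposal rests on two unestablished facts, one of which you yourself identify as the crux, so the proof is incomplete rather than wrong.
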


\subsection{Conventions}
For the purposes of this paper, we will often use $\infty$-categorical language, following Lurie \cite{lurie2008highertopostheory}\cite{HA}.  We list off some definitions that will be used throughout the paper.

\begin{definition}
A symmetric monoidal stable functor $F:\mc{C}\to\mc{D}$ between local tt-categories will itself be called \textit{local} if given $c\in \mc{C}$, $F(c)\simeq 0$ iff~$c\simeq 0$.
\end{definition}
\begin{remark}
A local functor between local tt-categories is nothing more than a conservative functor as defined above.  However, if one were studying the functor on the full big categories $\mo{Ind}(F):\mo{Ind}(\mc{C})\to \mo{Ind}(\mc{D})$, this functor is far from conservative, yet still deserves to be termed local as it is conservative when restricted to compact-rigid objects.
\end{remark}

When working with $\bb{E}_{\infty}$-rings over a characteristic zero field $k$, we will denote by $\Lambda_k[var_{2n+1}]$ the free $\bb{E}_{\infty}$-$k$-algebra on a class in degree $2n+1$, where $var$ will be a variable name.  We similarly denote by $k[var_{2n}]$ the free $\bb{E}_{\infty}$-$k$-algebra on a class in degree~$2n$.  If $R$ is an arbitrary $\bb{E}_{\infty}$-$k$-algebra with a class $y_{2n}\in \pi_{2n}(R)$, we denote by $R/y_{2n}$ the ring $R\otimes_{k[y_{2n}]}k$, which identifies with the cofiber~$\mo{cofib}(y_{2n}:\Sigma^{2n}R\to R)$.
\begin{definition}
When working with an $\bb{E}_{\infty}$-ring $R$, we write $\mo{Mod}_R^{perf}$ for the category of compact objects in the category $\mo{Mod}_R$ of $R$-module spectra.  We often will abbreviate and write $\mo{Spc}(R)$ for the Balmer spectrum $\mo{Spc}(\mo{Mod}_R^{perf})$ of this category.
\end{definition}

\subsection{Acknowledgments} I would like to thank Paul Balmer for recommending this problem to me and for guiding me as I worked on this project.  I would also like to thank Tobias Barthel for reading over the draft and providing useful comments; and Akhil Mathew for answering some of my questions related to finitely presented rational $\bb{E}_{\infty}$-rings.

%------------------------------------------------------------------------------
%------------------------------------------------------------------------------
\newpage
\section{The Non-Rigid Case}
We begin this section with a review of universal constructions to motivate what is to come.  Throughout this section, we work in the $\infty$-category $2\text{-}\mo{Ring}$ from Definition~\ref{def1.2}.  In the following, when working in the category $\mo{Fin}_*$ of finite pointed sets, we write $\langle n\rangle$ for the pointed set $\{*\}\coprod\{1,\ldots,n\}$, pointed at~$*$.
\subsection{Universal constructions.}  Suppose we are given an (essentially small) $\infty$-operad $\mc{O}^{\otimes}$, and a symmetric monoidal stable $\infty$-category $\mc{C}$.  Our goal is to construct a universal symmetric monoidal stable $\infty$-category $\mc{C}[\mc{O}]$, together with a symmetric monoidal exact functor from $\mc{C}$, such that for any $\mc{C}$-algebra $\mc{D}\in 2\text{-}\mo{Ring}_{\mc{C}/}$ (that is, symmetric monoidal stable $\infty$-category $\mc{D}$ with a distinguished symmetric monoidal exact functor from $\mc{C}$), there is an equivalence of $\infty$-categories $$\mo{Fun}_{2\text{-}\mo{Ring}_{\mc{C}/}}(\mc{C}[\mc{O}],\mc{D})\simeq \mo{Alg}_{\mc{O}}(\mc{D}),$$ between symmetric monoidal exact $\mc{C}$-linear functors from $\mc{C}[\mc{O}]$ to $\mc{D}$ and $\mc{O}$-algebra objects in $\mc{D}$.  We proceed as follows.

Given a symmetric monoidal $\infty$-category $\mc{C}$, pointed objects $\unit\to x$ in $\mc{C}$ may be viewed as algebra objects over the $\infty$-operad~$\mo{Poi}^{\otimes}\simeq \bb{E}_0$.  This operad is defined as the subcategory of finite pointed sets $\mo{Fin}_*$ containing all objects, and morphisms are maps $f:*\coprod S \to *\coprod T$ such that $|f^{-1}(t)|\leq 1$ for all $t\in T$ \cite[~Example 2.1.1.19]{HA}.  As in \cite[~Construction 2.2.4.1]{HA}, we can form the monoidal envelope $$\mo{Env}(\bb{E}_0):=\bb{E}_0\times_{\mo{Fun}(\{0\},\mo{Fin}_*)}\mo{Act}(\mo{Fin}_*)_{/\langle 1\rangle},$$ where $\mo{Act}(\mo{Fin}_*)_{/\langle 1\rangle}$ is the category of active morphisms in $\mo{Fin}_*$ with target~$\{*\}\coprod \{1\}$.\footnote{Recall that a morphism $f$ is active if $f^{-1}(*)=*$.}  There is a unique active morphism from any object to $\langle 1\rangle$, which provides us with an identification of $\mo{Env}(\bb{E}_0)$ with the category $\mo{Fin}^{inj}$ of finite sets and injective maps between them.  Summarizing, we have:
\begin{lemma}
The free symmetric monoidal $\infty$-category on a pointed object is given by the category $\mo{Fin}^{inj}$ of finite sets with injective maps.
\end{lemma}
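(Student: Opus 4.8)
The plan is to deduce the lemma from the universal property of the monoidal envelope that was just recalled. First I would invoke \cite[\textsection 2.2.4]{HA}: for any $\infty$-operad $\mc{O}^{\otimes}$ the monoidal envelope carries a tautological $\mc{O}$-algebra, and this datum exhibits it as the free symmetric monoidal $\infty$-category on an $\mc{O}$-algebra, in the sense that restriction along the tautological algebra gives an equivalence $\mo{Fun}^{\otimes}(\mo{Env}(\mc{O}),\mc{D})\simeq\mo{Alg}_{\mc{O}}(\mc{D})$ for every symmetric monoidal $\infty$-category $\mc{D}$. Taking $\mc{O}^{\otimes}=\bb{E}_0\simeq\mo{Poi}^{\otimes}$ and recalling that an $\bb{E}_0$-algebra in $\mc{D}$ is exactly a pointed object $\unit\to x$, this reduces the lemma to identifying the category $\mo{Env}(\bb{E}_0)=\bb{E}_0\times_{\mo{Fun}(\{0\},\mo{Fin}_*)}\mo{Act}(\mo{Fin}_*)_{/\langle 1\rangle}$ with $\mo{Fin}^{inj}$ (the induced monoidal structure being concatenation, \ie disjoint union, i.e.\ the coproduct on $\mo{Fin}^{inj}$, so the monoidal refinement costs nothing extra).

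Then I would unwind that pullback. An object is a finite pointed set $\langle n\rangle$ together with an active map $\langle n\rangle\to\langle 1\rangle$; since the only active map to $\langle 1\rangle$ sends every non-basepoint element to $1$, objects correspond simply to the finite sets $\{1,\dots,n\}$. A morphism $\langle n\rangle\to\langle m\rangle$ is a morphism $\phi$ of $\bb{E}_0$ together with the datum that the active map $\langle n\rangle\to\langle 1\rangle$ factors as $\phi$ followed by the active map $\langle m\rangle\to\langle 1\rangle$. The one place where a small argument is needed rather than pure bookkeeping is that this factorization forces $\phi$ itself to be active: if $\phi$ sent some $i\in\{1,\dots,n\}$ to the basepoint of $\langle m\rangle$, the composite would send $i$ to the basepoint of $\langle 1\rangle$, contradicting activeness of $\langle n\rangle\to\langle 1\rangle$; conversely every active $\phi$ makes the relevant triangle commute, and does so uniquely because the active map out of $\langle m\rangle$ is unique. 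So morphisms in $\mo{Env}(\bb{E}_0)$ are precisely the active morphisms of $\bb{E}_0$, and by the very definition of $\bb{E}_0$ an active $\phi\colon\langle n\rangle\to\langle m\rangle$ all of whose non-basepoint fibers have at most one element is the same datum as an injection $\{1,\dots,n\}\hook\{1,\dots,m\}$. Composition and identities match on the two sides, which gives $\mo{Env}(\bb{E}_0)\simeq\mo{Fin}^{inj}$.

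I do not anticipate a genuine obstacle here; the only thing to be attentive to is the $\infty$-categorical hygiene. The ``datum of a factorization'' above lives in the arrow-type category $\mo{Act}(\mo{Fin}_*)_{/\langle 1\rangle}$, so strictly one should observe that, because $\mo{Fin}_*$ and therefore all of the slice and arrow categories entering the envelope are ordinary $1$-categories, the mapping spaces of $\mo{Env}(\bb{E}_0)$ are discrete and the homotopy pullback agrees with the strict pullback of $1$-categories. Granting that bit of care, everything collapses to the elementary combinatorial identification above, completing the proof.
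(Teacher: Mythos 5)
Your proof is correct and follows essentially the same route as the paper: invoke the universal property of the monoidal envelope (\cite[Proposition 2.2.4.9]{HA}) together with the identification of $\bb{E}_0$-algebras with pointed objects, and identify $\mo{Env}(\bb{E}_0)$ with $\mo{Fin}^{inj}$ via the uniqueness of the active map to $\langle 1\rangle$. The only difference is that you unwind the pullback identification (and the $1$-categorical strictness point) in more detail than the paper, which disposes of it in a sentence preceding the lemma.
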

\begin{proof}
By \cite[~Proposition 2.2.4.4]{HA}, $\mo{Env}(\mc{O})$ is a symmetric monoidal $\infty$-category, in our case, giving that $\mo{Fin}^{inj}$ is symmetric monoidal with monoidal structure given by taking coproducts.  Using \cite[~Proposition 2.2.4.9]{HA}, the $\infty$-category of symmetric monoidal functors $\mo{Env}(\mc{O})\to\mc{D}$ landing in a symmetric monoidal $\infty$-category $\mc{D}$ is equivalent to the category of $\mc{O}$-algebra objects in $\mc{D}$.  Now use that $\mo{Alg}_{\bb{E}_0}(\mc{D})$ is the category of pointed objects in $\mc{D}$ see e.g. \cite[~Proposition 2.1.3.9]{HA}.
\end{proof}

Now, to arrive at the free situation in tt geometry, we proceed as follows.  Let $\mc{T}$ be an idempotent complete symmetric monoidal stable $\infty$-category.  By \cite[~Lemma 5.3.2.11]{HA}, there is an equivalence between $2\text{-}\mo{Ring}$ and the category $\mo{CAlg}(\mo{Pr}^{L,\omega}_{\mo{st}})$ of commutative algebra objects in compactly generated presentable stable $\infty$-categories (equipped with the Lurie tensor product).  This equivalence leads us to study the following:
\begin{lemma}[\cite{HA}]
Given an essentially small $\infty$-operad $\mc{O}$ and some $\mc{C}\in 2\text{-}\mo{Ring}$, the free symmetric monoidal stable $\infty$-category over $\mc{C}$ with an $\mc{O}$-algebra is given by the presheaf category $\mo{Fun}(\mo{Env}(\mc{O})^{op},\mo{Ind}(\mc{C}))^{\omega}$, together with the Day convolution tensor product structure.
\end{lemma}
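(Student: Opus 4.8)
The plan is to transport the statement into the world of presentably symmetric monoidal stable $\infty$-categories, identify the candidate as an explicit tensor product there, and chain together the universal properties of (i) coproducts in $\mo{CAlg}(\mo{Pr}^{L})$, (ii) Day-convolution presheaf categories, and (iii) the monoidal envelope from \cite[~Proposition 2.2.4.9]{HA}. Throughout I use the equivalence $2\text{-}\mo{Ring}\simeq\mo{CAlg}(\mo{Pr}^{L,\omega}_{\mo{st}})$, $\mc{C}\mapsto\mo{Ind}(\mc{C})$, recalled above from \cite[~Lemma 5.3.2.11]{HA}, keeping in mind that the morphisms on the right are the colimit-preserving symmetric monoidal functors that \emph{additionally} preserve compact objects. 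Set $\mc{E}:=\mo{Fun}(\mo{Env}(\mc{O})^{op},\mo{Ind}(\mc{C}))$ with its Day convolution structure (\cite[~\textsection 2.2.6]{HA}); this is presentably symmetric monoidal since $\mo{Env}(\mc{O})$ is small symmetric monoidal and $\mo{Ind}(\mc{C})$ presentably symmetric monoidal, and it is stable, being a functor category valued in the stable category $\mo{Ind}(\mc{C})$. I would first record the symmetric monoidal identification $\mc{E}\simeq\mc{P}(\mo{Env}(\mc{O}))\otimes\mo{Ind}(\mc{C})$ in $\mo{CAlg}(\mo{Pr}^{L})$, where $\mc{P}(\mo{Env}(\mc{O}))=\mo{Fun}(\mo{Env}(\mc{O})^{op},\mc{S})$ is the space-valued Day-convolution presheaf category (compatibility of Day convolution with $\otimes$ of presentable categories, \cite[~\textsection 4.8.1]{HA}). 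Since the tensor product is the coproduct in $\mo{CAlg}(\mo{Pr}^{L})$, this exhibits a colimit-preserving symmetric monoidal coprojection $\iota\colon\mo{Ind}(\mc{C})\to\mc{E}$, and, composing the other coprojection with the Yoneda embedding, a symmetric monoidal $\mo{Ind}(\mc{C})$-linear ``Yoneda'' $y\colon\mo{Env}(\mc{O})\to\mc{E}$.

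Next I would check compactness. The category $\mc{E}$ is compactly generated, with compact objects the retracts of finite colimits of the objects $y(m)\otimes\iota(c)$ for $m\in\mo{Env}(\mc{O})$ and $c\in\mc{C}=\mo{Ind}(\mc{C})^{\omega}$ (these generate because representables generate $\mc{P}(\mo{Env}(\mc{O}))^{\omega}$ under finite colimits and retracts). As $y$ and $\iota$ are symmetric monoidal, $\big(y(m)\otimes\iota(c)\big)\otimes\big(y(m')\otimes\iota(c')\big)\simeq y(m\otimes m')\otimes\iota(c\otimes c')$, so $\mc{E}^{\omega}$ is closed under Day convolution; it is stable and idempotent complete (being the compacts of a compactly generated category), and it contains $\iota(\mc{C})$ since $y(\unit_{\mo{Env}(\mc{O})})$ is the unit of $\mc{P}(\mo{Env}(\mc{O}))$. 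Hence $\mc{C}[\mc{O}]:=\mc{E}^{\omega}$ lies in $2\text{-}\mo{Ring}$ with $\mo{Ind}(\mc{C}[\mc{O}])=\mc{E}$, and $\iota$ restricts to a map $\mc{C}\to\mc{C}[\mc{O}]$ in $2\text{-}\mo{Ring}$, i.e. an object of $2\text{-}\mo{Ring}_{\mc{C}/}$.

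For the universal property, fix $\mc{D}\in 2\text{-}\mo{Ring}_{\mc{C}/}$ with structure map $g\colon\mc{C}\to\mc{D}$. By \cite[~Lemma 5.3.2.11]{HA}, $\mo{Fun}_{2\text{-}\mo{Ring}_{\mc{C}/}}(\mc{C}[\mc{O}],\mc{D})$ is the $\infty$-category of colimit-preserving symmetric monoidal $\mo{Ind}(\mc{C})$-linear functors $F\colon\mc{E}\to\mo{Ind}(\mc{D})$ that preserve compact objects. Dropping the last condition, the coproduct property of $\mc{E}\simeq\mc{P}(\mo{Env}(\mc{O}))\otimes\mo{Ind}(\mc{C})$ identifies such $F$ with colimit-preserving symmetric monoidal functors $\mc{P}(\mo{Env}(\mc{O}))\to\mo{Ind}(\mc{D})$, hence — by the universal property of the Day-convolution presheaf category (\cite[~\textsection 4.8.1]{HA}), using that $\mo{Ind}(\mc{D})$ is stable so no stabilization is lost — with symmetric monoidal functors $\mo{Env}(\mc{O})\to\mo{Ind}(\mc{D})$, hence — by \cite[~Proposition 2.2.4.9]{HA} — with $\mo{Alg}_{\mc{O}}(\mo{Ind}(\mc{D}))$. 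Under this chain $F$ corresponds to the $\mc{O}$-algebra with underlying objects $F(y(x))$, $x$ ranging over the colors of $\mc{O}$. Since each $y(x)$ is representable, hence compact, if $F$ preserves compacts the algebra is valued in $\mc{D}$; conversely, if the underlying objects lie in $\mc{D}$, then $F$ sends the generating compacts $y(m)\otimes\iota(c)$ to a finite tensor product of underlying objects, tensored with $\mo{Ind}(g)(c)$ — all compact, as $\mo{Ind}(g)$ preserves compacts — so $F$ preserves all compact objects. Since $\mo{Alg}_{\mc{O}}(\mc{D})$ is precisely the full subcategory of $\mo{Alg}_{\mc{O}}(\mo{Ind}(\mc{D}))$ of algebras with compact underlying objects ($\mc{D}\subseteq\mo{Ind}(\mc{D})$ being a full symmetric monoidal subcategory), we conclude $\mo{Fun}_{2\text{-}\mo{Ring}_{\mc{C}/}}(\mc{C}[\mc{O}],\mc{D})\simeq\mo{Alg}_{\mc{O}}(\mc{D})$, naturally in $\mc{D}$.

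The main obstacle is not conceptual but bookkeeping: carrying the ``preserves compact objects'' condition through the three universal properties in the last paragraph, which forces one to use the explicit generation of $\mc{E}^{\omega}$ and the fact — built into the monoidal envelope — that every object of $\mo{Env}(\mc{O})$ is a tensor product of objects in the image of the colors of $\mc{O}$. A secondary point to cite carefully is the compatibility of Day convolution on $\mo{Fun}(\mo{Env}(\mc{O})^{op},\mo{Ind}(\mc{C}))$ with the presentable tensor product $\mc{P}(\mo{Env}(\mc{O}))\otimes\mo{Ind}(\mc{C})$, which is what licenses the coproduct manipulations above.
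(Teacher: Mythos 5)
Your proposal is correct and follows essentially the same route as the paper: identify $\mc{P}(\mo{Env}(\mc{O}))$ with Day convolution as the free presentably symmetric monoidal category on $\mo{Env}(\mc{O})$, form the coproduct $\mc{P}(\mo{Env}(\mc{O}))\otimes\mo{Ind}(\mc{C})$ in commutative algebras in presentable categories, identify it with $\mo{Fun}(\mo{Env}(\mc{O})^{op},\mo{Ind}(\mc{C}))$ via \cite[~4.8.1]{HA}, invoke \cite[~Proposition 2.2.4.9]{HA} for the envelope, and pass to compact objects. The only difference is that you spell out the compact-generation and compact-preservation bookkeeping that the paper compresses into ``one quickly sees,'' which is a welcome but not divergent elaboration.
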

\begin{proof}
Using \cite[~Proposition 4.8.1.10]{HA}, we find that the free presentably symmetric monoidal $\infty$-category (presentable as it is a presheaf category) on $\mo{Env}(\mc{O})$ is given by the presheaf category~$\mc{P}(\mo{Env}(\mc{O}))\simeq \mo{Fun}(\mo{Env}(\mc{O})^{op},\mc{S})$.  Since the Yoneda image gives a collection of compact generators of this $\infty$-category, it in fact lives in $\mo{CAlg}(\mo{Pr}^{L,\omega})$, and one quickly sees that it is also the free compactly generated presentably symmetric monoidal $\infty$-category on $\mo{Env}(\mc{O})$ (free in the sense that the $\infty$-category of functors out of this category is equivalent to $\mc{O}$-algebras in the compact objects of the target).  Now, by \cite[~Proposition 3.2.4.7]{HA}, the coproduct in $\mo{CAlg}(\mo{Pr}^{L,\omega})$ is given by the Lurie tensor product, so that for a given $\mc{C}\in 2\text{-}\mo{Ring}$, the free $\mo{Ind}(\mc{C})$-algebra in $\mo{Pr}^{L,\omega}$ on $\mc{O}$ is given by $$ \mc{P}(\mo{Env}(\mc{O}))\otimes \mo{Ind}(\mc{C}).$$  Using \cite[~Proposition 4.8.1.17]{HA}, we find that
\begin{align*}
\mc{P}(\mo{Env}(\mc{O}))\otimes \mo{Ind}(\mc{C})&\simeq \mo{RFun}(\mc{P}(\mo{Env}(\mc{O}))^{op},\mo{Ind}(\mc{C}))\\
&\simeq \mo{LFun}(\mc{P}(\mo{Env}(\mc{O})),\mo{Ind}(\mc{C})^{op})^{op}\\
&\simeq \mo{Fun}(\mo{Env}(\mc{O}),\mo{Ind}(\mc{C})^{op})^{op}\\
&\simeq \mo{Fun}(\mo{Env}(\mc{O})^{op},\mo{Ind}(\mc{C})).
\end{align*}
Finally, we pass by the equivalence $2\text{-}\mo{Ring}\simeq \mo{CAlg}(\mo{Pr}^{L,\omega}_{\mo{st}})$ via passing to compact objects in order to conclude.
\end{proof}
Specializing to our case of interest,
\begin{corollary}
The free idempotent complete symmetric monoidal stable $\infty$-category over such a category $\mc{T}$ is the category $$ \mo{Fun}((\mo{Fin}^{inj})^{op},\mo{Ind}(\mc{T}))^{\omega}$$ of compact objects in~$\mo{Fun}((\mo{Fin}^{inj})^{op},\mo{Ind}(\mc{T}))$, equipped with the Day convolution tensor product.
\end{corollary}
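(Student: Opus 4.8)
The plan is to obtain this as the case $\mc{O}^{\otimes}=\bb{E}_0$ (equivalently $\mo{Poi}^{\otimes}$) of the preceding Lemma. Recall from \textsection2.A that a pointed object $\unit\to x$ in a symmetric monoidal $\infty$-category is the same datum as an $\bb{E}_0$-algebra (\cite[~Proposition 2.1.3.9]{HA}); hence ``the free idempotent complete symmetric monoidal stable $\infty$-category over $\mc{T}$ equipped with a pointed object'' is by definition the object $\mc{T}[\bb{E}_0]$ of $2\text{-}\mo{Ring}_{\mc{T}/}$ solving the universal problem of \textsection2.A for $\mc{O}=\bb{E}_0$. First I would invoke the preceding Lemma with $\mc{C}=\mc{T}$ and $\mc{O}=\bb{E}_0$: it identifies the underlying $\infty$-category of $\mc{T}[\bb{E}_0]$ with $\mo{Fun}(\mo{Env}(\bb{E}_0)^{op},\mo{Ind}(\mc{T}))^{\omega}$, carrying the Day convolution product. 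Then I would substitute the symmetric monoidal equivalence $\mo{Env}(\bb{E}_0)\simeq\mo{Fin}^{inj}$ supplied by the first Lemma of the subsection, so that $\mo{Env}(\bb{E}_0)^{op}\simeq(\mo{Fin}^{inj})^{op}$, yielding exactly the category $\mo{Fun}((\mo{Fin}^{inj})^{op},\mo{Ind}(\mc{T}))^{\omega}$ of the statement with its Day convolution tensor product.

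Two routine points remain. (i) One checks that this category really belongs to $2\text{-}\mo{Ring}$, i.e.\ is idempotent complete. Since $\mo{Fin}^{inj}$ is essentially small and $\mo{Ind}(\mc{T})$ is compactly generated and stable, $\mo{Fun}((\mo{Fin}^{inj})^{op},\mo{Ind}(\mc{T}))$ is again compactly generated and stable, and the full subcategory of compact objects in any compactly generated stable $\infty$-category is idempotent complete: an idempotent on a compact object splits in the ambient category, and its splitting, being a retract of a compact object, is compact. That the Day convolution restricts to the compact objects follows because the representables are compact and closed under the convolution product. (ii) One records that the universal property furnished by the preceding Lemma is already phrased inside $2\text{-}\mo{Ring}_{\mc{T}/}$; this is exactly the role of the equivalence $2\text{-}\mo{Ring}\simeq\mo{CAlg}(\mo{Pr}^{L,\omega}_{\mo{st}})$ of \cite[~Lemma 5.3.2.11]{HA}, which lets one pass between the big presentable picture — where the freeness of $\mc{P}(\mo{Env}(\bb{E}_0))\otimes\mo{Ind}(\mc{T})$ is transparent — and the small one by taking compact objects.

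I do not expect a genuine obstacle here; the content is entirely in the two Lemmas already proved. The only delicate point is bookkeeping: one must make sure the chain of equivalences $\mc{P}(\mo{Env}(\mc{O}))\otimes\mo{Ind}(\mc{C})\simeq\mo{Fun}(\mo{Env}(\mc{O})^{op},\mo{Ind}(\mc{C}))$ in the preceding Lemma is an equivalence of symmetric monoidal $\mc{T}$-linear $\infty$-categories (Day convolution on both sides), rather than merely of underlying $\infty$-categories, so that the identification of the present Corollary is valid in $2\text{-}\mo{Ring}_{\mc{T}/}$. Granting that, one simply plugs $\mo{Env}(\bb{E}_0)=\mo{Fin}^{inj}$ into the formula.
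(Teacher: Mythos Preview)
Your proposal is correct and matches the paper's approach exactly: the paper gives no proof beyond a \qed, treating the Corollary as the immediate specialization of the preceding Lemma to $\mc{O}=\bb{E}_0$ together with the identification $\mo{Env}(\bb{E}_0)\simeq\mo{Fin}^{inj}$ from the first Lemma. Your additional bookkeeping in (i) and (ii) is more detail than the paper supplies, but it is in the same spirit and raises no issues.
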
\qed\\
It will be convenient to introduce the following notation.
\begin{notation}	Given a small symmetric monoidal $\infty$-category $\mc{C}$, denote the category $\mo{Fun}((\mo{Fin}^{inj})^{op},\mo{Ind}(\mc{C}))^{\omega}$ considered above by~$\mc{C}[\mo{Poi}]$.  Additionally, we denote by $\mc{C}[X]$ the category $\mo{Fun}((\mo{Fin}^{\simeq})^{op},\mo{Ind}(\mc{C}))^{\omega}$ of compact objects in $\mo{Ind}(\mc{C})$-valued presheaves on the category of finite sets and bijective maps.
\end{notation}
\begin{remark}
The second category appearing above, $\mc{C}[X]$, can be shown to be the free symmetric monoidal stable $\infty$-category over $\mc{C}$ on an object.  This serves as a sort of ``non-rigid'' tt-affine line, which the notation was chosen to reflect.
\end{remark}

\subsection{The Counterexample}  
We come to the main theorem of this section. 
\begin{theorem}\label{th2.5}
Let $\mc{C}$ be a local symmetric monoidal stable $\infty$-category.  Then the category $\mc{C}[\mo{Poi}]$ described in the previous section is a local symmetric monoidal stable $\infty$-category for which the exact-nilpotence condition fails.
\end{theorem}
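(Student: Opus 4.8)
The category $\mc{C}[\mo{Poi}] = \mo{Fun}((\mo{Fin}^{inj})^{op},\mo{Ind}(\mc{C}))^\omega$ carries a universal pointed object $\unit \to X$; write $f \colon \unit \to X$ for this map, and let $g \colon Y \to \unit$ be its fiber. The idea is that $X$ and $Y$ are ``transcendental'' enough that neither $f$ nor $g$ becomes $\otimes$-nilpotent after tensoring with any nonzero object, essentially because $\mc{C}[\mo{Poi}]$ is a polynomial-type construction and the powers of $f$ (resp.\ $g$) are ``linearly independent.'' The plan has two parts: (1) show $\mc{C}[\mo{Poi}]$ is local, and (2) construct, for any putative witness $z$, an obstruction to $z\otimes f^{\otimes n}$ and $z\otimes g^{\otimes n}$ vanishing.

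\textbf{Step 1: Locality.} First I would give a concrete model for $\mo{Ind}(\mc{C}[\mo{Poi}]) \simeq \mo{Fun}((\mo{Fin}^{inj})^{op},\mo{Ind}(\mc{C}))$ and its Day convolution symmetric monoidal structure. The unit is the presheaf represented by $\emptyset \in \mo{Fin}^{inj}$ (tensored with $\unit_{\mc{C}}$), and $X$ is the presheaf represented by the one-element set. A key structural fact to extract is that evaluation at $\emptyset$, i.e.\ $\mo{ev}_\emptyset \colon \mo{Fun}((\mo{Fin}^{inj})^{op},\mo{Ind}(\mc{C})) \to \mo{Ind}(\mc{C})$, is symmetric monoidal (since $\emptyset$ is the monoidal unit of $(\mo{Fin}^{inj},\sqcup)$ and Day convolution evaluated at the unit computes ``constant coefficients''), and restricts to a symmetric monoidal functor $\mc{C}[\mo{Poi}] \to \mc{C}$. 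To prove locality: if $a \otimes b \simeq 0$ in $\mc{C}[\mo{Poi}]$, use a filtration/grading argument. The category $\mo{Fin}^{inj}$ is filtered by cardinality, giving every object of $\mc{C}[\mo{Poi}]$ a finite filtration whose associated graded pieces are $\Sigma_n$-equivariant objects pulled back from $\mc{C}$ along $\mo{ev}_{\{1,\dots,n\}}$; Day convolution is compatible with this filtration, and on associated graded it is a convolution product over the symmetric groups. Then $a \otimes b \simeq 0$ forces the lowest nonzero graded pieces of $a$ and $b$ to have vanishing convolution; using locality of $\mc{C}$ (and a transfer/idempotent argument to strip the $\Sigma_n$-action, à la the splitting of $\mc{C}[X]$ into its ``$\Sigma_n$-summands'' when working rationally or after idempotent completion — here one has to be careful and possibly only conclude $\otimes$-nilpotence rather than vanishing), deduce that $a$ or $b$ is $\otimes$-nilpotent. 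I expect this is the technically fussiest part, since the $\Sigma_n$-actions on the graded pieces do not split integrally; the honest statement of locality of $\mc{C}[\mo{Poi}]$ may require tracking that a nonzero object has a nonzero image under $\mo{ev}_n$ for some $n$ and that the top such $n$ behaves multiplicatively up to nilpotents.

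\textbf{Step 2: Failure of the exact-nilpotence condition.} Take the canonical fiber sequence $Y \xto{g} \unit \xto{f} X$ in $\mc{C}[\mo{Poi}]$. I would show directly that for \emph{every} nonzero $z \in \mc{C}[\mo{Poi}]$, neither $z \otimes f$ nor $z \otimes g$ is $\otimes$-nilpotent. For $f$: the point is that $f^{\otimes n} \colon \unit \to X^{\otimes n}$ is (up to the $\Sigma_n$-symmetry) the inclusion of the ``lowest'' summand, and $X^{\otimes m}$ are all distinct nonzero objects whose evaluations at $\{1,\dots,m\}$ are nonzero; tensoring with a nonzero $z$ (which by Step 1's analysis is detected in some bounded range of cardinalities) shifts but does not kill this, so $z \otimes f^{\otimes n} \neq 0$ for all $n$. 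For $g$: here $Y = \mo{fib}(f)$ and one computes that $Y$ evaluated at the one-element set is (a shift of) $\unit_{\mc{C}}$, so $g$ is ``detected'' at cardinality one; tensoring powers $g^{\otimes n}$ detects at cardinality $n$ and again tensoring with nonzero $z$ cannot annihilate all of these. Making ``detected at cardinality $n$'' precise — via the cardinality filtration and the symmetric monoidal evaluation functors $\mo{ev}_n$, showing $\mo{ev}_n$ sends $f^{\otimes n}$ (resp.\ $g^{\otimes n}$) to a non-$\otimes$-nilpotent map in $\mc{C}$ even after smashing with $\mo{ev}_*(z)$ for suitable $*$ — is the crux. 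The main obstacle I anticipate is exactly this: cleanly isolating, for an arbitrary compact $z$, a single numerical invariant (like ``top nonvanishing cardinality'') that is additive under $\otimes$ up to nilpotence, so that tensoring with $f^{\otimes n}$ or $g^{\otimes n}$ provably increases it without bound and hence can never produce $0$; the non-split $\Sigma_n$-actions mean one must argue with $\otimes$-nilpotence and transfers rather than honest idempotent splittings, and care is needed so that the argument does not secretly use rationality of $\mc{C}$.
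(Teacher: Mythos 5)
Your Step 1 is salvageable but framed with unnecessary worries, and it contains one incorrect claim: the evaluation functors $\mo{ev}_n$ for $n\geq 1$ are \emph{not} symmetric monoidal for Day convolution (only evaluation at the unit $\emptyset$ is). What is true, and what the paper proves, is that the restriction $\mo{Res}\colon \mc{C}[\mo{Poi}]\to\mc{C}[X]$ along $\mo{Fin}^{\simeq}\hookrightarrow\mo{Fin}^{inj}$ is conservative, preserves compacts, and its a priori lax monoidal structure is strong (checked on the generators $\yo([1])^{\otimes m}$); locality of $\mc{C}[X]$ is then elementary because objects there split as finite sums of $\Sigma_n$-equivariant pieces and the Day product is induction $\mo{Ind}_{\Sigma_n\times\Sigma_m}^{\Sigma_{n+m}}$, which is faithful on objects since $\unit[\Sigma_{n+m}]$ is a finite free $\unit[\Sigma_n\times\Sigma_m]$-module. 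No idempotent splitting of the $\Sigma_n$-actions, no transfers, and no rationality enter; your anticipated obstruction there is not a real one.

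The genuine gap is in Step 2, for the map $g\colon Y\to\unit$. Your plan is to detect non-vanishing of $z\otimes g^{\otimes n}$ through evaluations/the cardinality filtration, but this cannot work: computing pointwise, $Y(\emptyset)\simeq 0$, $\unit(T)\simeq 0$ for $T\neq\emptyset$, so \emph{every component} of $g$, of $g^{\otimes n}$, and of $z\otimes g^{\otimes n}$ is the zero map; indeed $\mo{Res}(g)\simeq 0$ in $\mc{C}[X]$, since $\mo{Res}(f)$ is a split monomorphism and the fiber of a split mono maps to the source by zero. The non-triviality of $g$ lives entirely in the naturality/gluing data along the non-invertible injections, which is invisible to any evaluation or associated-graded invariant such as a ``top nonvanishing cardinality''; so the crux you flag is not merely fussy, it is an approach that fails. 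The paper's way around this is a duality you do not have: the canonical equivalence $\mc{C}^{op}[\mo{Poi}]\simeq\mc{C}[\mo{Poi}]^{op}$ carries the free pointed map of $\mc{C}^{op}[\mo{Poi}]$ to (the opposite of) $g$, so one reruns the split-monomorphism argument for $f$ in $\mc{C}^{op}$ (which is again local) to rule out nilpotence of $g$ on any nonzero object. Your argument for $f$ itself is essentially correct once phrased via $\mo{Res}$: since $\mo{Res}(\yo([1]))\simeq\yo([0])\oplus\yo([1])$, the map $\mo{Res}(f)^{\otimes n}$ is split mono, so $\mo{Res}(z)$ is a retract of the target of $\mo{Res}(z\otimes f^{\otimes n})$ and conservativity of $\mo{Res}$ finishes; but without the opposite-category step (or some genuinely new detection mechanism for filtration-shifting maps) the half of the statement concerning $g$ remains unproved.
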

Before proving this theorem, we must first embark on a journey to study the ``affine line'' $\mc{C}[X]$ over~$\mc{C}$.  This has appeared in \cite[~Example 2.17]{mathew2016galoisgroupstablehomotopy}, during Mathew's own discussion on free constructions.
\begin{proposition}\label{prop2.6}
If $\mc{C}$ is a local symmetric monoidal stable $\infty$-category, then so too is~$\mc{C}[X]$.
\end{proposition}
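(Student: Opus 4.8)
The plan is to make the identification of $\mc{C}[X]$ with a category of symmetric sequences completely explicit, and then to deduce locality of $\mc{C}[X]$ from locality of $\mc{C}$ by a ``lowest-degree'' decomposition of compact objects. First I would unwind the notation: the decomposition $\mo{Fin}^{\simeq}\simeq\coprod_{n\ge 0}B\Sigma_n$ into connected components induces an equivalence $\mo{Fun}((\mo{Fin}^{\simeq})^{op},\mo{Ind}(\mc{C}))\simeq\prod_{n\ge 0}\mo{Ind}(\mc{C})^{B\Sigma_n}$, under which the Day convolution is, by a direct computation of the defining coend, the usual induction product on symmetric sequences. Write $R_n\colon\mo{Ind}(\mc{C})^{B\Sigma_n}\to\mo{Fun}((\mo{Fin}^{\simeq})^{op},\mo{Ind}(\mc{C}))$ for the inclusion of the $n$-th factor (extension by zero), which is both adjoint to the projection $\mo{ev}_n$ onto that factor. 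The computations to record are: (i) $R_n(P)\otimes R_m(Q)\simeq R_{n+m}\big(\mo{Ind}^{\Sigma_{n+m}}_{\Sigma_n\times\Sigma_m}(P\otimes Q)\big)$; (ii) $R_n$ is conservative, and so is $\mo{Ind}^{\Sigma_{n+m}}_{\Sigma_n\times\Sigma_m}$ — restricting along the trivial subgroup, the underlying object of $\mo{Ind}^{\Sigma_{n+m}}_{\Sigma_n\times\Sigma_m}(M)$ is a coproduct of $\binom{n+m}{n}$ copies of the underlying object of $M$, as is the forgetful functor $\mo{Ind}(\mc{C})^{B\Sigma_n}\to\mo{Ind}(\mc{C})$; and (iii) a finite direct sum in a stable category vanishes if and only if each summand does.

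Next I would pin down the compact objects. Both $R_n$ and $\mo{ev}_n$ preserve all colimits and are mutually adjoint, hence both preserve compact objects; and in an infinite product of nonzero presentable stable categories a compact object cannot have infinite support, since $\id_x$ must factor through a finite sub-coproduct $\bigoplus_{n\in F}R_n(x_n)\hookrightarrow x$. Thus a compact $x\in\mc{C}[X]$ is a finite direct sum $x\simeq\bigoplus_{i=1}^{r}R_{d_i}(a_i)$ with the $d_i$ pairwise distinct and each $a_i\neq 0$; and because the forgetful functor $\mo{Ind}(\mc{C})^{B\Sigma_n}\to\mo{Ind}(\mc{C})$ also preserves compacts while $\mc{C}$ is idempotent complete, the underlying object of each $a_i$ lies in $\mc{C}$. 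Iterating (i) and distributing over these finite direct sums, $x^{\otimes N}\simeq\bigoplus_{(i_1,\dots,i_N)}R_{d_{i_1}+\cdots+d_{i_N}}\big(\mo{Ind}(a_{i_1}\otimes\cdots\otimes a_{i_N})\big)$ for the evident iterated induction, so (ii) and (iii) give, for a second compact $y\simeq\bigoplus_{j=1}^{s}R_{e_j}(b_j)$, that $x\otimes y\simeq 0$ if and only if $a_i\otimes b_j\simeq 0$ in $\mc{C}$ for all $i,j$, and that $x^{\otimes N}\simeq 0$ if and only if $a_{i_1}\otimes\cdots\otimes a_{i_N}\simeq 0$ for every length-$N$ tuple.

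Finally I would run the reduction. The unit of $\mc{C}[X]$ is $R_0(\unit_{\mc{C}})$, whose underlying object is nonzero, so $\mc{C}[X]\neq 0$. Given compacts $x,y$ with $x\otimes y\simeq 0$, set $a:=\bigoplus_{i}a_i$ and $b:=\bigoplus_{j}b_j$ in $\mc{C}$; then $a\otimes b\simeq\bigoplus_{i,j}(a_i\otimes b_j)\simeq 0$, so locality of $\mc{C}$ forces one of them, say $a$, to be $\otimes$-nilpotent, i.e.\ $a^{\otimes N}\simeq\bigoplus_{(i_1,\dots,i_N)}a_{i_1}\otimes\cdots\otimes a_{i_N}\simeq 0$ for some $N$; hence every $N$-fold tensor product of the $a_i$ vanishes, and by the last displayed equivalence $x^{\otimes N}\simeq 0$. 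Symmetrically, if $b$ is $\otimes$-nilpotent then so is $y$, and we conclude that $\mc{C}[X]$ is local. The only genuine work is in step one — verifying that the Day convolution is the induction product, the conservativity of $\mo{Ind}^{\Sigma_{n+m}}_{\Sigma_n\times\Sigma_m}$, and the identification of compact objects with finitely supported symmetric sequences whose terms come from $\mc{C}$ — so I expect the main obstacle to be bookkeeping rather than anything conceptual.
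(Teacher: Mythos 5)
Your proof is correct and follows the same skeleton as the paper's: identify compact objects of $\mc{C}[X]$ with finitely supported symmetric sequences valued in $\mc{C}$ (compactness forces the identity to factor through a finite truncation, and the forgetful functor from $\Sigma_n$-objects preserves compacts), identify Day convolution with the induction product, and use that $\mo{Ind}_{B\Sigma_n\times B\Sigma_m}^{B\Sigma_{n+m}}$ is conservative because its underlying object is a finite direct sum of copies of the external tensor product. Where you genuinely diverge is the endgame: the paper reduces to the claim that two nonzero pure-degree objects have nonzero tensor product, asserting this holds ``since $\mc{C}$ is local''; with the paper's non-rigid notion of locality (tensor-zero implies one factor is $\otimes$-nilpotent) that assertion is stronger than what locality gives, since a local $\mc{C}$ may contain a nonzero $c$ with $c\otimes c\simeq 0$. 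Your version handles precisely this case: from $x\otimes y\simeq 0$ you pass to $(\oplus_i a_i)\otimes(\oplus_j b_j)\simeq 0$ in $\mc{C}$, invoke locality there to get $\otimes$-nilpotence of one of the sums, and translate the vanishing of all $N$-fold products of the pieces back into $x^{\otimes N}\simeq 0$ via the induction formula; this buys a proof that works verbatim for the nilpotence-style definition of local used in Section 2, whereas the paper's shortcut implicitly uses the stronger ``prime'' reading. The only small gap on your side is that the claim that the forgetful functor $\mo{Ind}(\mc{C})^{B\Sigma_n}\to\mo{Ind}(\mc{C})$ preserves compact objects is asserted without justification; the paper's reason is that this functor is left adjoint to coinduction, which commutes with filtered colimits.
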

\begin{proof}
By construction as (compact objects in) a presheaf category on $\mo{FinSet}^{\simeq}$ valued in $\mo{Ind}(\mc{C})$, equipped with the Day convolution symmetric monoidal structure, $\mc{C}[X]$ is a symmetric monoidal stable $\infty$-category.  We now focus on showing that it is local.  Suppose given a presheaf $P:(\mo{FinSet}^{\simeq})^{op}\to \mo{Ind}(\mc{C})$ which is compact as an object of $\mo{Fun}((\mo{FinSet}^{\simeq})^{op},\mo{Ind}(\mc{C}))$, we then claim that $P([n])\simeq 0$ for all but finitely many $n$, with $[n]$ the $n$-element set $\{1,\ldots, n\}$.  Indeed, for $i\in\bb{N}$, defining $P_i([n])=P([n])$ for $n\leq i$, and $P_i([n])=0$ otherwise, we can write $P$ as the filtered colimit $P\simeq \varinjlim_{i}P_i$, and compactness of $P$ forces the identity of $P$ to factor over a finite stage, so that $P_i\to P$ splits for some $i$ (at which point $P=P_i$ by construction).  That is to say, the category $\mc{C}[X]$ decomposes as a sum $$\mc{C}[X]\simeq \coprod_{n\geq 0}\mo{Fun}(B\Sigma_n,\mo{Ind}(\mc{C}))^{\omega},$$ such that every object $f\in\mc{C}[X]$ factors as a (finite) direct sum $f\simeq \oplus_{n\geq 0}f_n$, where $f_n:(\mo{Fin}^{\simeq})^{op}\to\mc{C}$ are presheaves with the property that $f_n(S)=0$ if~$|S|\neq n$, and $f_n\simeq 0$ for almost all~$n\in\bb{N}$.  We use that for a finite group $G$, $\mo{Fun}(BG,\mo{Ind}(\mc{C}))^{\omega}\subseteq \mo{Fun}(BG,\mo{Ind}(\mc{C}))$ factors over $\mo{Fun}(BG,\mc{C})$, as the forgetful functor $\mo{Fun}(BG,\mo{Ind}(\mc{C}))\to \mo{Ind}(\mc{C})$ is left adjoint to coinduction, which commutes with filtered colimits, enforcing that the forgetful functor preserves compact objects.  This allows us to work with \textit{some} presheaves valued in $\mc{C}$ as opposed to $\mo{Ind}(\mc{C})$.  To show that $\mc{C}[X]$ is local, it suffices to show that any two nonzero objects of the form $f=f_n:B\Sigma_n\to \mc{C}$ and $g=g_m:B\Sigma_m\to\mc{C}$ have nonzero tensor product.  Given two such nonzero objects $f$ and $g$, the object $$f\otimes g:B\Sigma_{n}\times B\Sigma_{m}\to \mc{C}\times \mc{C}\xrightarrow{\otimes}\mc{C}$$ is nonzero, since $\mc{C}$ is local.  The tensor product $f\otimes_{\mc{C}[X]}g$, given by Day convolution, is described through inducing this representation~$\mo{Ind}_{B\Sigma_n\times B\Sigma_m}^{B\Sigma_{m+n}}(f\otimes g)$.

Thus, our claim reduces to showing that the functor $$\mo{Ind}_{B\Sigma_n\times B\Sigma_m}^{B\Sigma_{m+n}}:\mc{C}^{B\Sigma_{n}\times B\Sigma_{m}}\to \mc{C}^{B\Sigma_{n+m}}$$ is faithful on objects.  This follows from the description $$\mo{Ind}_{B\Sigma_n\times B\Sigma_m}^{B\Sigma_{m+n}}(-)\simeq \unit[\Sigma_{n+m}]\otimes_{\unit[\Sigma_n\times\Sigma_m]}-,$$ and the fact that $\unit[\Sigma_{n+m}]$ is a direct sum of copies of~$\unit[\Sigma_n\times\Sigma_m]$.
\end{proof}
\begin{remark}
For general $\mc{C}$, not every $\Sigma_n$-equivariant object of $\mc{C}$ is compact as an object of $\mo{Fun}(B\Sigma_n,\mo{Ind}(\mc{C}))$, the former category being possibly larger.  However, the above proof still goes through if we work with the larger category $\coprod_{n\geq 0}\mc{C}^{B\Sigma_n}$, which could be used equally well in place of $\mc{C}[X]$ for what follows.
\end{remark}
\begin{proof}[Proof of Theorem \ref{th2.5}]
First, note that the inclusion $\mo{Fin}^{\simeq}\to \mo{Fin}^{inj}$ gives rise to a functor~$\mo{Res}:\mc{C}[\mo{Poi}]\to\mc{C}[X]$.  It is easy to see that this functor exists at the level of presentable categories.  In order to see that it actually takes compact objects to compact objects, select a choice of compact generators $\{c_i\}_{i\in I}$ for~$\mc{C}$.  Then, the collection $\{\yo([n])\otimes_{\mc{C}} c_i\}_{i\in I, n\in\bb{N}}$ for naturals $n$ (where $[n]$ is again the $n$-element set $\{1,...,n\}$, and $\yo:\mo{Fin}^{inj}\to\mc{C}[\mo{Poi}]$ is the Yoneda embedding) forms a collection of compact generators for the category~$\mc{C}[\mo{Poi}]$.  In particular, $\mc{C}[\mo{Poi}]$ identifies with the thick stable closure of this collection in the category $\mo{Fun}((\mo{Fin}^{inj})^{op},\mo{Ind}{\mc{C}})$, so to see that any object of $\mc{C}[\mo{Poi}]$ lands in $\mc{C}[X]$, it suffices to test on these test objects, where it can be seen directly.

A priori, $\mo{Res}$ need not be symmetric monoidal.  However, in our case, it actually is.  Since this functor arises as the right adjoint of a symmetric monoidal functor (which classifies the free object of $\mc{C}[\mo{Poi}]$), $\mo{Res}$ is lax symmetric monoidal, so our goal is to show that this lax structure can be promoted to a genuine symmetric monoidal structure.  Consider the full subcategory $\mc{T}$ of $\mc{C}[\mo{Poi}]\times \mc{C}[\mo{Poi}]$ consisting of all pairs $(f,g)$ for which the natural map $\mo{Res}(f)\otimes \mo{Res}(g)\to \mo{Res}(f\otimes g)$ is an equivalence.  The category $\mc{T}$ is a thick stable subcategory of $\mc{C}[\mo{Poi}]$, so we can use our test objects $\{\yo([n])\otimes_{\mc{C}}c_i\}$ from before.  Note that $\mo{Res}(\yo([0]))\simeq \yo([0])$, such that $\mo{Res}$ takes the unit to the unit.  Using that $\yo([1])$ generates the Yoneda image under tensor powers, we can reduce to checking that tensor powers of $\yo([1])$ map to tensor powers of~$\mo{Res}(\yo([1]))$.  The object $\mo{Res}(\yo[1])$ is given as~$\yo[0]\oplus\yo[1]\in\mc{C}[X]$.  Thus, $$\mo{Res}(1)^{\otimes m}\simeq \oplus_{n\geq 0}\yo([n])^{\oplus {m\choose n}},$$ whereas, $$\mo{Res}((\yo([1]))^{\otimes m})\simeq \mo{Res}(\yo([m]))\simeq \oplus_{n\geq 0}\mo{Fin}^{inj}([n],[m])\simeq \oplus_{n\geq 0}\yo([n])^{\oplus {m\choose n}},$$ and the map provided by the lax symmetric monoidal structure is an equivalence.

Now, since $\mo{Res}$ is faithful on objects, we find that $\mc{C}[\mo{Poi}]$ is local by Proposition~\ref{prop2.6}.  Note that $\mo{Res}(\yo([0])\to \yo([1]))$ is split, so the free map cannot become $\otimes$-nilpotent after tensoring with a nonzero object of~$\mc{C}[\mo{Poi}]$.  Now, note that there is a canonical equivalence $\mc{C}^{op}[\mo{Poi}]\simeq \mc{C}[\mo{Poi}]^{op}$ taking the free map in $\mc{C}^{op}[\mo{Poi}]$ to the fiber of the free map $\unit\to\yo([1])$ in~$\mc{C}[\mo{Poi}]$.  In particular, the above proof run for $\mc{C}^{op}$ in place of $\mc{C}$ shows that there cannot be any nonzero object in $\mc{C}$ such that $\mo{fib}(\unit\to \yo[1])$ becomes $\otimes$-nilpotent after tensoring with this object.  Thus, the exact-nilpotence condition fails for $\mc{C}[\mo{Poi}]$, as claimed.
\end{proof}
\begin{remark}
It is tempting to attempt to perform a similar proof in the rigid setup.  The first obstruction to this idea is the fact that the free tt-category on a pointed object over a given tt-category $\mc{C}$ is inherently rather complicated.  Instead of a presheaf category on finite sets and injective maps, one has a presheaf category on a variant of the category of oriented 1-dimensional cobordisms where one allows half-open intervals.  Further complicating matters, this category is not going to be local, and its spectrum is rather complicated, similar to the spectrum of the rigid tt affine line, which will be studied in forthcoming work of Aoki, Barthel, Chedalavada, Stevenson and Schlank in \cite{HigherZar} (which will also study free constructions in the non-rigid case in more detail).  Nevertheless, if we understood all of the local categories in the free case over the category of spectra, this would lead either to a proof of the nerves of steel conjecture or a counterexample.
\end{remark}

%------------------------------------------------------------------------------
%------------------------------------------------------------------------------
\newpage
\section{Some Positive Results}
Although the exact-nilpotence condition does fail without rigidity, it may yet be the case that it always holds when working in a tt-category.  The goal of this section is to prove several cases where the condition does hold.  To begin, we will show the claim under some ``Noetherian'' type hypotheses.
\begin{proposition}\label{prop3.1}
If $R$ is a connective $\bb{E}_{\infty}$-ring such that $\pi_0(R)$ is a local ring, then $\mc{C}=\mo{Mod}_{R}^{perf}$ is a local tt-category.  If $\pi_*(R)$ is a Noetherian ring, then the exact-nilpotence condition holds for~$\mc{C}$.
\end{proposition}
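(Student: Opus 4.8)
I would treat the two assertions separately.

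\textbf{(a) $\mo{Mod}_R^{perf}$ is a local tt-category.} It is a tt-category since the perfect $R$-modules are exactly the dualizable objects of $\mo{Mod}_R$, and the category is stable and idempotent-complete; it is nonzero because $\pi_0R\neq 0$. For locality, write $\mf{m}\subset\pi_0R$ for the maximal ideal, $k=\pi_0R/\mf{m}$, and consider base change $F=(-)\otimes_Rk\colon\mo{Mod}_R^{perf}\to\mo{Mod}_k^{perf}$ along $R\to\pi_0R\to k$, which is symmetric monoidal. I claim $F$ is conservative. Factor it as $M\mapsto M\otimes_R\pi_0R\mapsto(M\otimes_R\pi_0R)\otimes_{\pi_0R}k$. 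The first functor is conservative on perfect $R$-modules: such an $M$ is bounded (as $R$ is connective), and a connectivity argument identifies $\pi_n(M\otimes_R\pi_0R)$ with $\pi_n(M)$ in the bottom nonzero degree. The second is conservative on $\mo{Mod}_{\pi_0R}^{perf}=\mo{Perf}(\pi_0R)$ by derived Nakayama over the local ring $\pi_0R$ (a perfect complex has a minimal free model, which equals its homology after $\otimes_{\pi_0R}k$). Since $\mo{Perf}(k)$ is a local tt-category and $F$ is conservative and symmetric monoidal, $M\otimes N\simeq 0$ forces $F(M)\otimes F(N)\simeq 0$, hence $F(M)\simeq 0$ or $F(N)\simeq 0$, hence $M\simeq 0$ or $N\simeq 0$; this is the locality condition in the rigid setting.

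\textbf{(b) Exact-nilpotence when $\pi_*R$ is Noetherian.} Given a fiber sequence $N\xrightarrow{g}R\xrightarrow{f}M$ in $\mo{Mod}_R^{perf}$, apply $F$ to get a fiber sequence $\bar N\xrightarrow{\bar g}k\xrightarrow{\bar f}\bar M$ in $\mo{Perf}(k)$. Since $k$ is the unit there, with $\mo{Hom}(k,-)=\pi_0(-)$, an elementary long-exact-sequence argument using that $k$ is a field shows $\bar f=0$ or $\bar g=0$. Because the exact-nilpotence condition is symmetric in $f$ and $g$, and the case $\bar g=0$ reduces to the case $\bar f=0$ applied to the dual map $g^{\vee}\colon R\to N^{\vee}$ (one has $F(g^{\vee})=F(g)^{\vee}$, and from a nonzero $z$ with $z\otimes(g^{\vee})^{\otimes n}\simeq 0$ one gets the nonzero $z^{\vee}$ with $z^{\vee}\otimes g^{\otimes n}\simeq(z\otimes(g^{\vee})^{\otimes n})^{\vee}\simeq 0$), we may assume $\bar f=f\otimes_Rk\simeq 0$ and must produce a nonzero $z$ with $z\otimes f$ $\otimes$-nilpotent.

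The plan is to take for $z$ a Koszul-type object supported at the closed point. Let $\mf{M}=\mf{m}\oplus\bigoplus_{i>0}\pi_iR\subset\pi_*R$ be the maximal homogeneous ideal; as $\pi_*R$ is Noetherian it is finitely generated, say by homogeneous $x_1,\dots,x_r$, and I set $Z=R/x_1\otimes_R\cdots\otimes_R R/x_r$, a perfect $R$-module. Then $\pi_0Z=\pi_0R/\mf{m}=k\neq 0$, so $Z\neq 0$; and $\pi_*Z$ is a finitely generated $\pi_*R$-module killed by a power of $\mf{M}$ (each $x_j^2$ kills $\pi_*(R/x_j)$), hence — since $\pi_*R/\mf{M}=k$ is a field, so $\mf{M}$ is maximal and $\pi_*R/\mf{M}^N$ is Artinian — of finite length over $\pi_*R$. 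Being bounded with finite-length homotopy, $Z$ admits a finite filtration $0=Z_0\to Z_1\to\cdots\to Z_{\ell}=Z$ whose successive cofibres are shifts $k[i_j]$ of $k$ (the Postnikov filtration refined by a composition series of each $\pi_iZ$).

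The key step — and the part I expect to be the real work — is the amplification: \emph{if $B$ carries such a length-$\ell$ filtration by shifts of $k$ and $k\otimes_Rf\simeq 0$, then $B\otimes f^{\otimes\ell}\simeq 0$}. I would prove this by induction along the filtration $B'\to B\xrightarrow{p}k[i]$: since $(p\otimes\mo{id}_M)\circ(B\otimes f)=(k[i]\otimes f)\circ p$ is a shift of $(k\otimes_Rf)\circ p=0$, the map $B\otimes f$ factors through $B'\otimes M$; writing $f^{\otimes\ell}=(\mo{id}_M\otimes f^{\otimes(\ell-1)})\circ f$ and using naturality, $B\otimes f^{\otimes\ell}$ then factors through $B'\otimes f^{\otimes(\ell-1)}$ tensored with an identity, which vanishes by the inductive hypothesis. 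Taking $B=Z$ gives $Z\otimes f^{\otimes\ell}\simeq 0$, so $z=Z$ works. It is exactly the Noetherian hypothesis that powers this: it makes $\mf{M}$ finitely generated (so $Z$ is perfect) and $\pi_*Z$ of finite length (so the amplification terminates). Without it the naive choice $z=\mo{cofib}(f)$ fails — already over $\bb{Z}_{(p)}$ there are fiber sequences with $\mo{cofib}(f)\otimes f^{\otimes n}$ nonzero for every $n$.
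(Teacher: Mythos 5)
Your proposal is correct and takes essentially the same route as the paper: locality via conservativity of base change along $R\to\pi_0(R)$ on bottom homotopy groups (the paper stops at $\mathcal{D}^b(\pi_0(R))$ and cites its locality, while you continue to the residue field $k$ via derived Nakayama), and exact-nilpotence via the same Koszul object on homogeneous generators of the graded maximal ideal of $\pi_*(R)$, the same dualization to arrange vanishing over $k$, and the same finite filtration with subquotients shifts of $k$. The only difference is presentational: you spell out the nilpotence-amplification induction and the nonvanishing of the Koszul object, which the paper asserts more briefly.
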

\begin{proof}
Since $R$ is a connective $\bb{E}_{\infty}$-ring, \cite[~Proposition 7.1.3.15]{HA} shows that $\pi_0(R)=\tau_{\leq 0}R$ is canonically an $\bb{E}_{\infty}$-$R$-algebra.  This supplies us with a symmetric monoidal functor~$-\otimes_{R}\pi_0(R):\mo{Mod}_{R}^{perf}\to \mc{D}^b(\pi_0(R))$.  Since the target is local (e.g. \cite[~Example 4.4]{balmer2010spectra}), in order to see that $\mo{Mod}_{R}^{perf}$ is local, it suffices to show that if $M\otimes_{R}\pi_0(R)\simeq 0$, then~$M\simeq 0$.  From the fiber sequence $$\tau_{\geq 1}R\to R\to \pi_0(R),$$ we obtain a fiber sequence, for any $R$-module $M$, $$M\otimes_{R}\tau_{\geq 1}R\to M \to M\otimes_{R}\pi_0(R).$$  Now, if $M$ is a nonzero perfect $R$-module, there is some maximal $i\in\bb{Z}$ such that $\pi_j(M)=0$ for all $j<i$, and~$\pi_i(M)\neq 0$.  Since tensor products of connective modules over connective $\bb{E}_{\infty}$-rings remain connective, and $\tau_{\geq 1}R$ is $1$-connective, we have that $\pi_i(M\otimes_{R}\tau_{\geq 1}R)=0$, so that $M\otimes_{R}\tau_{\geq 1}R\to M$ cannot be an equivalence, and $M\otimes_{R}\pi_0(R)$ is nonzero as well, from which the claim follows.

Now, assume that $\pi_*(R)$ is Noetherian, let $k$ denote the residue field of $\pi_0(R)$, and write $\mf{m}$ for the maximal ideal in~$\pi_*(R)$.  We can take a presentation $\mf{m}=(a_1,...,a_n)$ for some a finite set of generators~$a_i$.  Define the perfect $R$-module $$z:=\bigotimes_{R,i=1}^{n}\mo{cofib}(\Sigma^{|a_i|}R\xrightarrow{a_i}R).$$  Although $a_i$ may not act as zero on $\pi_*(z)$, we at least know that $a_i^2$ acts as zero on the homotopy groups of~$z$.  We learn that $\pi_*(z)$ is a finitely generated graded module over the graded ring~$\pi_{*}(R)/(a_1^2,...,a_n^2)$.  In particular, $z$ only has finitely many nonzero homotopy groups, and $\pi_i(z)$ has a finite filtration whose associated graded pieces are all $k$-vector spaces, for all~$i$.

Now, consider a fiber sequence $$y\xrightarrow{g} \unit\xrightarrow{f}x$$ between perfect $R$-modules.  Since $k$ is a field, up to replacing this sequence by its dual, we may assume that $f\otimes_{R}k$ is split or equivalently that~$g\otimes_{R} k\simeq 0$.  We claim that $z\otimes_{R} g$ is $\otimes$-nilpotent.  Since $z$ has only finitely many nonzero homotopy groups, the Whitehead filtration on $z$ is a finite filtration, so it suffices to show that $\pi_i(z)\otimes_{R} g$ is $\otimes$-nilpotent for each~$i$.  Using that $\pi_i(z)$ has a finite filtration with associated graded pieces given by $k$-vector spaces, we deduce that $\pi_i(z)\otimes_{R} g$ is $\otimes$-nilpotent, since~$k\otimes_{R} g\simeq 0$.
\end{proof}
\begin{remark}
	In the above proof, when $\pi_*(R)$ is Noetherian, we constructed an object such that the exact-nilpotence condition for any fiber sequence always holds with said object playing the role of $z$ in the statement of the condition.  This can only happen when the closed point is the support locus of a single object.
\end{remark}
\begin{remark}
	The proof of Proposition~\ref{prop3.1} shows already that when $R$ is a connective $\bb{E}_{\infty}$-ring with $\pi_0(R)$ a local ring, the functor $-\otimes_{R}\pi_0(R):\mo{Mod}_R^{\omega}\to \mo{Mod}_{\pi_0(R)}^{\omega}$ is a local functor of local tt-categories.
\end{remark}
\begin{corollary}\label{cor3.2}
If $f:R\to S$ is a map of connective $\bb{E}_{\infty}$ rings inducing a local ring homomorphism of local rings on $\pi_0$, then $f^*:\mo{Mod}_R^{perf}\to \mo{Mod}_S^{perf}$ is a local functor of local tt-categories.
\end{corollary}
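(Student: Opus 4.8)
The plan is to deduce everything from Proposition~\ref{prop3.1} together with a single conservativity statement. Since $\pi_0(R)$ and $\pi_0(S)$ are local rings, Proposition~\ref{prop3.1} already tells us that $\mo{Mod}_R^{perf}$ and $\mo{Mod}_S^{perf}$ are local tt-categories, so the only thing left to prove is that $f^*=S\otimes_R(-)$ is conservative on perfect modules. Write $k_R$, $k_S$ for the residue fields of $\pi_0(R)$, $\pi_0(S)$. Because $\pi_0(f)\colon\pi_0(R)\to\pi_0(S)$ is a local homomorphism it carries $\mf{m}_R$ into $\mf{m}_S$, hence induces a field extension $\bar f\colon k_R\to k_S$, and by naturality of $\pi_0$ the ring map $R\to S\to\pi_0(S)\to k_S$ coincides with $R\to\pi_0(R)\to k_R\xrightarrow{\bar f}k_S$. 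Transitivity of base change then gives, for any $M\in\mo{Mod}_R^{perf}$,
\[
(f^*M)\otimes_S k_S\;\simeq\;M\otimes_R k_S\;\simeq\;(M\otimes_R k_R)\otimes_{k_R}k_S .
\]

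Next I would record two conservativity facts. First, $-\otimes_R k_R\colon\mo{Mod}_R^{perf}\to\mo{Mod}_{k_R}^{perf}$ is conservative: given a nonzero perfect $M$, shift it so that it is connective with $\pi_0(M)\neq0$; then $\pi_0(M)$ is a nonzero finitely generated $\pi_0(R)$-module, so $\pi_0(M\otimes_R k_R)\cong\pi_0(M)\otimes_{\pi_0(R)}k_R$ is nonzero by Nakayama's lemma over the local ring $\pi_0(R)$, whence $M\otimes_R k_R\neq0$. (Alternatively this follows by composing the Remark after Proposition~\ref{prop3.1} with Nakayama for perfect complexes over $\pi_0(R)$.) Second, $-\otimes_{k_R}k_S$ is conservative, being base change along a field extension: on $\mo{Mod}_{k_R}^{perf}$ every object is a finite sum of shifts of $k_R$, which maps to a finite sum of shifts of $k_S\neq0$.

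Combining these with the displayed equivalences: if $f^*M\simeq0$ then $(M\otimes_R k_R)\otimes_{k_R}k_S\simeq0$, hence $M\otimes_R k_R\simeq0$, hence $M\simeq0$; this is exactly the assertion that $f^*$ is local. The argument is essentially bookkeeping with base change once the factorization through residue fields is in place, so I do not anticipate a real obstacle: the only genuine input is Nakayama's lemma applied to the bottom homotopy module of a perfect complex over $\pi_0(R)$, which is standard, and the one point to be careful about is that $\pi_0(f)$ being a \emph{local} homomorphism is precisely what produces the map $\bar f\colon k_R\to k_S$ and the compatibility used above.
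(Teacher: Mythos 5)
Your proof is correct and follows essentially the same route as the paper: both reduce conservativity of $f^*$ to base change along the residue field extension $k_R\to k_S$ supplied by the local hypothesis, using conservativity of reduction to the residue field over the local base (the paper passes through $\pi_0(R)\to\pi_0(S)$ first, you go directly via a bottom-homotopy/Nakayama argument, which is only a cosmetic difference). The one point worth a word in your write-up is why $\pi_0(M)$ is finitely generated over $\pi_0(R)$ (perfect over a connective ring implies almost perfect, whence the bottom homotopy group is finitely generated), so that Nakayama applies.
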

\begin{proof}
Given any nonzero perfect $R$-module $M\in \mo{Mod}_R^{perf}$, then $$M\otimes_{R}S\otimes_{S}\pi_0(S)\simeq M\otimes_{R}\pi_0(R)\otimes_{\pi_0(R)}\pi_0(S).$$Since $M\otimes_{R}\pi_0(R)$ is nonzero, the claim reduces to the case when $R=\pi_0(R)$ and $S=\pi_0(S)$ are classical local rings.  Any local map $R\to S$ of classical local rings induces a commutative diagram
\begin{center}
	\begin{tikzcd}
	R\rar\dar & S\dar\\
	k(R)\rar & k(S)
	\end{tikzcd}
\end{center}
where $k(R)$ (resp. $k(S)$) is the residue field of $R$ ($S$).  If $M$ is any nonzero perfect $R$-module, then $M\otimes_{R}k(R)$ remains nonzero, and thus $$M\otimes_{R}S\otimes_S k(S)\simeq M\otimes_{R}k(R)\otimes_{k(R)}k(S)\not\simeq 0.$$
\end{proof}

In \cite[~Theorem 1.4]{mathew2016residuefieldsclassrational}, Mathew computes the Balmer spectrum of the category of modules over rational $\bb{E}_{\infty}$-rings with Noetherian~$\pi_*$, and in doing so proves a nilpotence theorem.  Combining this with \cite[~Theorem 5.6]{Balmer2020}, we deduce:
\begin{proposition}\label{prop3.5}
The Nerves of Steel conjecture holds for the category $\mo{Mod}_R^{\omega}$ of perfect modules over a rational $\bb{E}_{\infty}$-ring $R$ with Noetherian $\pi_*$.  Equivalently, the exact-nilpotence condition holds for all localizations of such categories.
\end{proposition}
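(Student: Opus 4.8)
The plan is short: combine Mathew's nilpotence theorem for rational $\bb{E}_{\infty}$-rings with Noetherian homotopy with Balmer's criterion promoting such nilpotence theorems to the Nerves of Steel conjecture, and then re-read the conclusion through a relative form of Theorem~\ref{th1.2}.

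First I would unpack \cite[~Theorem 1.4]{mathew2016residuefieldsclassrational}. There Mathew not only identifies $\mo{Spc}(\mo{Mod}_R^{\omega})$ for $R$ rational with $\pi_*(R)$ Noetherian, but en route constructs, for each prime $\gp\in\mo{Spc}(\mo{Mod}_R^{\omega})$, a ``residue field'' rational $\bb{E}_{\infty}$-ring $\kappa(\gp)$ together with a symmetric monoidal exact functor $F_{\gp}\colon\mo{Mod}_R^{\omega}\to\mo{Mod}_{\kappa(\gp)}^{\omega}$, and proves a nilpotence theorem of Devinatz--Hopkins--Smith type: an object (hence a morphism) of $\mo{Mod}_R^{\omega}$ is $\otimes$-nilpotent if and only if each $F_{\gp}$ sends it to a $\otimes$-nilpotent object. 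Each $\kappa(\gp)$ is a ``graded field'' in the relevant sense, so $\mo{Mod}_{\kappa(\gp)}^{\omega}$ has a one-point Balmer and homological spectrum. I would then feed the family $\{F_{\gp}\}$ into \cite[~Theorem 5.6]{Balmer2020}, which provides exactly this passage from a nilpotence theorem to the Nerves of Steel conjecture, i.e.\ that $\mo{Spc}^{h}(\mc{T})\to\mo{Spc}(\mc{T})$ is a homeomorphism. This gives the first assertion for $\mc{T}=\mo{Mod}_R^{\omega}$.

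For the ``equivalently'', I would run the argument of the proof of Theorem~\ref{th1.2} relative to a fixed tt-category. The comparison map is always surjective in the rigid case by \cite[~Corollary 3.9]{Balmer2020}, so the Nerves of Steel conjecture for $\mc{T}$ is precisely its injectivity, which may be tested pointwise over $\mo{Spc}(\mc{T})$: the fibre over $\gp$ is the homological spectrum of the local tt-category $\mc{T}_{\gp}$ obtained by idempotent-completing the Verdier quotient of $\mc{T}$ at $\gp$, and it is a single point exactly when the exact-nilpotence condition holds for $\mc{T}_{\gp}$, by the argument of \cite[~Theorem A.1]{balmer2020homological} carried out inside $\mc{T}_{\gp}$. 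Quantifying over all $\gp$, equivalently over all localizations of $\mc{T}$ that are local tt-categories, yields the displayed equivalence; with the previous paragraph this proves the proposition.

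The substantive point is the hypothesis-matching in the middle: one must check that the residue fields of \cite{mathew2016residuefieldsclassrational} genuinely have singleton homological spectrum and that ``jointly detects $\otimes$-nilpotence'' there is the precise input \cite[~Theorem 5.6]{Balmer2020} requires, rather than the \apriori weaker statement detecting only the Balmer support. A minor secondary point, needed for the ``equivalently'', is that $\otimes$-nilpotence and the homological spectrum are compatible with the finite localizations $\mc{T}\to\mc{T}_{\gp}$, so that Balmer's equivalence localizes to the fixed category $\mo{Mod}_R^{\omega}$; this is routine.
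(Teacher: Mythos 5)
Your proposal follows essentially the same route as the paper: Mathew's nilpotence theorem for rational $\bb{E}_{\infty}$-rings (the residue-field detection statement, i.e.\ Theorem 5.16 of that paper rather than the spectrum computation of Theorem 1.4 you cite) fed into Balmer's Theorem 5.6, with the ``equivalently'' clause obtained by localizing the argument of Theorem~\ref{th1.2}. The one point the paper checks that you pass over silently is the hypothesis match on Noetherianity itself: Mathew assumes $\pi_{even}(R)$ Noetherian with $\pi_{odd}(R)$ finitely generated over it, whereas the proposition assumes $\pi_*(R)$ Noetherian, and the bulk of the paper's written proof is the (elementary) verification that these two conditions are equivalent.
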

\begin{proof}
This follows from the nilpotence theorem in \cite[~Theorem 5.16]{mathew2016residuefieldsclassrational}, together with \cite[~Theorem 5.6]{Balmer2020}.  We should note that Mathew's notion of Noetherian \cite[~Definition 2.1]{mathew2016residuefieldsclassrational}, that $\pi_{even}(R)$ is Noetherian and $\pi_{odd}(R)$ is a finitely generated $\pi_{even}(R)$-module, takes a slightly different form to what we have written, stated in a way which acknowledges that $\pi_*(R)$ is only graded-commutative and not commutative on the nose.  However the two notions are equivalent.  Indeed, if $\pi_{even}(R)$ is Noetherian, with $\pi_{odd}(R)$ a finitely generated $\pi_{even}(R)$-module, then $\pi_*(R)$ is a finite type $\pi_{even}(R)$-module, so any increasing chain of submodules (e.g. ideals) has a maximal element, hence is Noetherian.  Conversely, if $\pi_*(R)$ is Noetherian, noting that for any ideal $I\subseteq \pi_{even}(R)$, ${(\pi_*(R)\cdot I)\cap \pi_{even}(R)=I}$, we find that $\pi_{even}(R)$ must be Noetherian too.  If $\pi_{odd}(R)$ is not finitely generated as a $\pi_{even}(R)$-module, we can find a sequence $x_1,\ldots,x_n,\ldots\in\pi_{odd}(R)$ of (homogeneous degree) elements with $x_{n+1}\notin \pi_{even}(R)\cdot (x_1,\ldots,x_n)$.  But now taking the ideals $I_n:=(x_1,\ldots,x_n)$, the fact that $\pi_*(R)$ is Noetherian implies that for some $n$, $x_{n+1}\in\pi_{*}(R)\cdot (x_1,\ldots, x_n)$, say $x_{n+1}=\sum_{i=1}^{n}r_ix_i$ with $r_i\in R$.  We can of course replace $r_i$ by its homogeneous component in degree $\mo{deg}(x_{n+1})-\mo{deg}(x_{i})$, but then $r_i\in \pi_{even}(R)$ for all $i$, a contradiction.  Therefore, $\pi_*(R)$ being Noetherian implies that $R$ is Noetherian in Mathew's sense as well.
\end{proof}
It is tempting to try to use Proposition~\ref{prop3.1} to prove that the exact-nilpotence condition holds for modules over any finitely presented local rational $\bb{E}_{\infty}$-ring, where we say that $R$ is finitely presented if $R$ is a compact object in the category of $\bb{E}_{\infty}$-$\bb{Q}$-algebras.  Unfortunately, we run into the same issue considered by Mathew.  From here on out we will say a rational $\bb{E}_{\infty}$-ring $R$ is Noetherian if $\pi_*(R)$ is Noetherian.
\begin{example}(\cite[~Proposition 8.8]{mathew2016residuefieldsclassrational})
The $\bb{E}_{\infty}$-ring $R\Gamma(\bb{A}^2_{\bb{Q}}\bs\{0\})$ is finitely presented, but not Noetherian.
\end{example}
One may hope the connective case is better.  The situation is not so serendipitous, and in fact, non-Noetherian compact $\bb{E}_{\infty}$-$\bb{Q}$-algebras are quite plentiful, as the following example shows.
\begin{example}
Start with the free $\bb{E}_{\infty}$-ring $\bb{Q}[x_2]\otimes \Lambda_{\bb{Q}}[y_1]$ on generators $y_1$ in degree $1$ and $x_2$ in degree~$2$.  Let $R$ denote the $\bb{E}_{\infty}$ quotient by $x_2y_1$, that is, $$\bb{Q}\otimes_{\Lambda_{\bb{Q}}[z_3]}(\bb{Q}[x_2]\otimes \Lambda_{\bb{Q}}[y_1]),$$ where $z_3$ maps to~$x_2y_1$.  Considering $\bb{Q}[x_2]\otimes \Lambda_{\bb{Q}}[y_1]$ as a $\Lambda_{\bb{Q}}[z_3]$-module, the generator $y_1$ generates a $\Sigma \bb{Q}$-summand, and when we take the tensor product, we will get generators in $\pi_{4n+1}$ which multiply with $x$, $y$ and each other to zero.  In particular, $\pi_*(R)$ is far from Noetherian.
\end{example}

In line with Balmer's vision that there should be a good notion of ``Noetherian'' in tt-geometry, one may expect that any finitely presented $\bb{E}_{\infty}$-$\bb{Q}$-algebra is ``Noetherian,'' whatever this should mean.  One expected property is that the Balmer spectrum of a Noetherian tt-category should be a Noetherian topological space.  We only prove this in the main case of interest, though remark that a careful examination of the proof will show how to adapt the following to prove such a claim in the connective case.  Before we begin, let's recall the notion of a collection of ring maps detecting nilpotence; the form we take is from \cite[~Definition 4.3]{mathew2016residuefieldsclassrational}.
\begin{definition}
Given an $\bb{E}_{\infty}$-ring $R$, and a collection $\{R\to S_i\}_{i\in I}$ of $\bb{E}_{\infty}$-ring maps, we say that this collection \textit{detects nilpotence} (over $R$) if given any associative algebra object $T$ in $\mo{Ho}(\mo{Mod}_R)$, then for any $x\in\pi_*(T)$, $x$ is nilpotent if and only if the image of $x$ under $\pi_*(T)\to\pi_*(S_i\otimes_{R}T)$ is nilpotent in $\pi_*(S_i\otimes_{R}T)$ for every~$i\in I$.
\end{definition}
For a detailed account about detecting nilpotence, we refer to \cite{mathew2016residuefieldsclassrational}.  We state the main properties that will be useful to us in the following.
\begin{proposition}\label{prop3.8}
Let $R$ be an $\bb{E}_{\infty}$-ring, and $\{R\to S_i\}_{i\in I}$ a collection of $\bb{E}_{\infty}$-ring maps which detects nilpotence.  The following hold:
\begin{enumerate}
	\item If $f:R\to A$ is an $\bb{E}_{\infty}$-ring map, then $\{A\to A\otimes_{R}S_i\}_{i\in I}$ detects nilpotence over~$A$.
	\item If $h:x\to y$ is a morphism in $\mo{Mod}_R^{perf}$, then $h$ is $\otimes$-nilpotent if and only if $h\otimes_{R}S_i$ is $\otimes$-nilpotent in $\mo{Mod}_{S_i}^{perf}$ for every~$i\in I$.
	\item The induced map on Balmer spectra $\coprod_{i\in I}\mo{Spc}(S_i)\to \mo{Spc}(R)$ is surjective.\footnote{Here and later on in the paper, a coproduct of Balmer spectra denotes the coproduct in the category of topological spaces.}
	\item If $\{S_i\to T_{ij}\}_{j\in J_i}$ detects nilpotence over each $S_i$, then $\{R\to T_{ij}\}_{i\in I, j\in J_i}$ detects nilpotence over~$R$.
	\item If $k$ is a field of characteristic zero, and $n\in\bb{Z}$, the collections $\{\Lambda_k[z_{2n+1}]\to k\}$ and $\{k[z_{2n}]\to k,k[z_{2n}]\to k[z_{2n}^{\pm 1}]\}$ detect nilpotence over $\Lambda_k[z_{2n+1}]$ and $k[z_{2n}]$, respectively.
	\item Given an $\bb{E}_{\infty}$-ring map $A\to B$, if the thick $\otimes$-ideal of $\mo{Mod}_A$ generated by $B$ is all of $\mo{Mod}_A$ (in which case we say that $A\to B$ admits descent), then $\{A\to B\}$ detects nilpotence over~$A$.
\end{enumerate}
\end{proposition}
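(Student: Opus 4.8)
The plan is to obtain (a), (b), (c), (d) essentially for free from the definition and to isolate the genuine homotopy-theoretic content in (f), with (e) then following by short arguments. Items (a) and (d) are pure base-change bookkeeping. For (a): an associative algebra $T$ in $\mo{Ho}(\mo{Mod}_A)$ restricts, along the lax symmetric monoidal forgetful functor, to an associative algebra in $\mo{Ho}(\mo{Mod}_R)$, and for $x\in\pi_*(T)$ there is a canonical equivalence $S_i\otimes_R T\simeq(A\otimes_R S_i)\otimes_A T$ over the unit of $T$; applying detection of nilpotence over $R$ to the pair $(T,x)$ then reads off precisely that $\{A\to A\otimes_R S_i\}_{i\in I}$ detects nilpotence over $A$. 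For (d): given an associative $R$-algebra $T$ and $x\in\pi_*(T)$, one applies detection over $R$, then applies detection over each $S_i$ to the associative $S_i$-algebra $S_i\otimes_R T$ using $T_{ij}\otimes_{S_i}(S_i\otimes_R T)\simeq T_{ij}\otimes_R T$, and composes the two biconditionals.

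For (b) the device is to trade $\otimes$-nilpotence of a morphism for nilpotence of a homotopy class. Given $h\colon x\to y$ in $\mo{Mod}_R^{perf}$, put $M=\underline{\Hom}_R(x,y)$, a perfect $R$-module; since $x$ is dualizable one has $M^{\otimes n}\simeq\underline{\Hom}_R(x^{\otimes n},y^{\otimes n})$ compatibly with the class $\tilde h\in\pi_0(M)$, so that inside the tensor algebra $\mathbf{T}_R(M)=\bigoplus_{n\ge0}M^{\otimes n}$ (an associative $R$-algebra) one has $\tilde h^{\,n}=0$ if and only if $h^{\otimes n}=0$. Since $\mathbf{T}_R(-)$ and the internal hom of perfect objects both commute with base change along $R\to S_i$, applying detection of nilpotence over $R$ to $(\mathbf{T}_R(M),\tilde h)$ yields exactly that $h$ is $\otimes$-nilpotent iff $h\otimes_R S_i$ is $\otimes$-nilpotent for every $i$. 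Item (c) then follows from (b) together with the standard tensor-triangular principle that a family of tt-functors jointly detecting $\otimes$-nilpotence of morphisms is jointly surjective on Balmer spectra (see \cite{mathew2016residuefieldsclassrational, balmer2020homological}).

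Item (f) is the crux: it is the descent--nilpotence theorem of Mathew--Naumann--Noel. If $A\to B$ admits descent, then the unit $A$ is a retract of a finite truncation of the cobar (Amitsur) tower of $A\to B$, so for an associative $A$-algebra $T$ and $x\in\pi_*(T)$ whose image in $\pi_*(B\otimes_A T)$ is nilpotent, the object $T$ acquires a finite filtration whose associated graded pieces are retracts of modules of the form $T\otimes_A B^{\otimes_A k}$, on each of which $x$ acts nilpotently of uniformly bounded order; since the relevant tower has a horizontal vanishing line at a finite page, the standard ``nilpotence along a finite filtration'' argument then forces $x^N=0$ for an explicit $N$. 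I would simply invoke this.

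Granting (f), item (e) is short. For $\Lambda_k[z_{2n+1}]$: since $|z|$ is odd and $\chara k=0$ one has $z^2=0$ in $\pi_*\Lambda_k[z_{2n+1}]$, so the self-map $z$ of the unit is nilpotent, and the octahedral axiom places the cofiber of its square, namely $\Lambda_k[z_{2n+1}]\oplus\Sigma^{4n+3}\Lambda_k[z_{2n+1}]$, inside the thick $\otimes$-ideal generated by $k=\cofib(z)$; hence the unit lies in that ideal, $\Lambda_k[z_{2n+1}]\to k$ admits descent, and (f) applies. For $k[z_{2n}]$: base change gives $T\otimes_{k[z_{2n}]}k\simeq\cofib(z\colon\Sigma^{2n}T\to T)$ and $T\otimes_{k[z_{2n}]}k[z_{2n}^{\pm1}]\simeq T[z^{-1}]$, and if the image of $x\in\pi_*(T)$ is nilpotent in both then $x^{c}=zy$ for some $y\in\pi_*(T)$ (the kernel of $\pi_*T\to\pi_*(T/z)$ being $z\pi_*T$) and $z^{b}x^{a}=0$ for some $a,b$, whence $x^{ck}=z^{k}y^{k}$ by induction and $x^{ck+a}=z^{k-b}y^{k}(z^{b}x^{a})=0$ for $k\ge b$. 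The one genuine obstacle is (f); everything else is formal or a one-line verification once it is in hand.
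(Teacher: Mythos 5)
Your proposal is correct, but it is organized quite differently from the paper's proof, which is almost entirely a pointer to the literature: the paper proves (a) by the same one-line base-change observation you use, and then cites Propositions 4.4 and 4.6 and Examples 4.7 and 4.8 of \cite{mathew2016residuefieldsclassrational} for (b), (d), (e), (f), and \cite[Theorem 1.3]{barthel2023surjectivitytensortriangulargeometry} for (c). You instead re-derive most of these: your tensor-algebra device $\mathbf{T}_R(\underline{\mo{Hom}}_R(x,y))$ for (b) is a clean way to convert $\otimes$-nilpotence of a morphism between perfect (hence dualizable) modules into element nilpotence, and it is compatible with base change, so it does reduce (b) to the definition; your (d) is the standard composition of base changes; and your (e) is essentially the argument behind Mathew's example --- descendability of $\Lambda_k[z_{2n+1}]\to k$ via $\cofib(z^2)\simeq \unit\oplus\Sigma^{4n+3}\unit$, and a direct $\pi_*$ computation for $k[z_{2n}]$ using that $\ker(\pi_*T\to\pi_*(T/z))=z\pi_*T$ and that $z$ is central (it is of even degree and comes from the ground ring). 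For (f) you invoke the descent-nilpotence theorem, which is also what the cited example rests on. The net effect is that your write-up is more self-contained where the paper is citation-based; what the paper's route buys is brevity and, for (c), the correct reference.

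The one point to be careful about is (c). Your ``standard tensor-triangular principle'' is a genuine theorem, but for an \emph{infinite} index set it does not follow formally from the single-functor surjectivity theorem of Balmer: passing to the product category only gives surjectivity from $\Spc\bigl(\prod_i \mo{Mod}_{S_i}^{perf}\bigr)$, which strictly contains $\coprod_i\Spc(S_i)$ in general, so joint surjectivity of the coproduct is a stronger statement. That statement is exactly \cite[Theorem 1.3]{barthel2023surjectivitytensortriangulargeometry}, which is what the paper cites; the references you give (\cite{mathew2016residuefieldsclassrational}, \cite{balmer2020homological}) do not contain the family version. Since in this paper the proposition is applied only to finite families, the finite-product reduction would suffice there, but as stated for arbitrary $I$ you should cite the Barthel surjectivity theorem rather than treat it as formal.
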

\begin{proof}
For (a), if $T$ is any $A$-algebra, it is also an $R$-algebra, and there is an equivalence $T\otimes_{R}S_j\simeq T\otimes_{A}(A\otimes_{R}S_j)$, from which the result follows.  Parts (b), (d), (e), and (f), are Proposition 4.4, Proposition 4.6, Example 4.7, and Example 4.8 of \cite{mathew2016residuefieldsclassrational}, respectively.  Part (c) follows from \cite[~Theorem 1.3]{barthel2023surjectivitytensortriangulargeometry}.
\end{proof}
We will work explicitly with adjoining generators and relations to $\bb{E}_{\infty}$-$k$-algebras, for $k$ a field of characteristic zero.  By \cite[~Proposition 4.3.2.1]{Illusie}, \cite[~Construction 25.2.2.6]{SAG}, and \cite[~Proposition 25.1.2.2(3)]{SAG} for any $n\geq 0$, the free $\bb{E}_{\infty}$-$k$-algebra on a class $x$ in degree $n\geq 0$ is given by $\mo{LSym}_{k}(k[n])$.  For $n$ odd, when we denote this algebra by $\Lambda_k[x_n]$, this has underlying $k$-module structure $k\oplus k[n]$, with the class $x_n\in\pi_n(\Lambda_k[x_n])$ squaring to zero in the ring $\pi_*(\Lambda_k[x_n])$.  Given an $\bb{E}_{\infty}$-$k$-algebra $R$, a map $\Lambda_k[x_n]\to R$ is uniquely determined by a module map $k[n]\to R$, equivalently an element $a\in \pi_n(R)$, which becomes the image of $x_n$ under the induced algebra map.  If $n$ is even, then the free $\bb{E}_{\infty}$-$k$-algebra on a class in degree $n$ has the form $\mo{LSym}_k(k[n])$, which has underlying $k$-module structure $\bigoplus_{i\geq 0}k[ni]$, and we denote this algebra by $k[x_n]$.  The ring structure on $\pi_*(k[x_n])$ is that of a graded polynomial ring on a class $x_n$ in degree $n$, and a map $k[x_n]\to R$ to an $\bb{E}_{\infty}$-algebra $R$ is again uniquely determined by some $a\in \pi_n(R)$ serving as the image of $x_n$ when one takes $\pi_n$ of the induced algebra map.

With these preliminaries in hand, we can finally prove the following theorem.
\begin{theorem}\label{th3.9}
Let $R$ be a finitely presented connective $\bb{E}_{\infty}$-$\bb{Q}$-algebra with $\pi_0(R)=k$ a field.  Then there exists a finite collection $\{S_1,\ldots,S_n\}$ of $\bb{E}_{\infty}$-$R$-algebras under $R$ such that the collection $\{S_1,\ldots,S_n,k\}$ detects nilpotence for $R$, where each $S_i$ has a unit in $\pi_2(S_i)$, $\pi_1(S_i)=0$, and $\pi_0(S_i)$ is a $k$-smooth integral domain.
\end{theorem}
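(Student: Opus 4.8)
\emph{Overall strategy.} The plan is to process $R$ along a finite cell presentation, using the formal closure properties of ``detecting nilpotence'' from Proposition~\ref{prop3.8} to keep replacing a ring by ``nicer'' ones, and to finish with a Noetherian induction. Since $R$ is compact among $\bb{E}_{\infty}$-$\bb{Q}$-algebras, $\pi_0(R)=k$ is a finitely generated $\bb{Q}$-algebra which is a field, hence a number field, so $k/\bb{Q}$ is finite \'etale; by \'etale rigidity $R$ is canonically an $\bb{E}_{\infty}$-$k$-algebra, and it is compact as such (it is a retract over $k$ of the \'etale base change $R\otimes_{\bb{Q}}k$, which splits as a product of twists — and smoothness over $k$ will then follow from smoothness over $\bb{Q}$ via the transitivity triangle for cotangent complexes). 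Working over $k$, write $R$ as a retract of a finite cell algebra of the form $P/\!\!/(\rho_1,\dots,\rho_s)$ with $P$ the free $\bb{E}_{\infty}$-$k$-algebra on finitely many generators in degrees $\ge 0$; retracts are harmless because if $\{R'\to S_j\}$ detects nilpotence and $R'\to R$ is a retraction then $\{R\to S_j\otimes_{R'}R\}$ detects nilpotence over $R$, and $P/\!\!/(\rho_\bullet)$ is obtained from $P$ by killing finitely many classes. By Proposition~\ref{prop3.8}(a) it then suffices, for each member $S$ of a detecting collection for $P$, to produce a detecting collection of the required form for $S/\!\!/(\bar\rho_1,\dots,\bar\rho_s)$.

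\emph{The free algebra and the basic moves.} For $P$ I would iterate Proposition~\ref{prop3.8}(e): an odd generator is square-zero, hence split off by killing it; an even generator $z_{2m}$ splits into the branches $z_{2m}\mapsto 0$ and $z_{2m}$ inverted; and the faithfully flat finite extension $k[z_{2m}^{\pm}]\to k[u_2^{\pm}]$ adjoining an $m$-th root admits descent (Proposition~\ref{prop3.8}(f)), which normalizes the periodicity class to degree $2$. This gives a finite detecting collection for $P$ consisting of $k$ and of rings $k[u_2^{\pm},v_2^{\pm},\dots]$ that are even-periodic with $\pi_0$ a Laurent polynomial ring over $k$ (a $k$-smooth domain) and $\pi_1=0$; call such rings \emph{nice}. (When $P$ has no even generators, killing an odd-degree relation $\rho$ produces, by characteristic-zero Koszul duality, a polynomial class in degree $|\rho|+1$, which supplies periodicity after a root trick; and in the residual case where no periodicity class ever arises, all relations have degree $0$ and one checks $\pi_*(R)$ is Noetherian, so Proposition~\ref{prop3.5} applies.) After base change along $P\to R$ and the same treatment of the odd $\bar\rho_i$ — which over a nice ring become even polynomial generators — one is reduced to the following claim, of which the case at hand is an instance: for every $\bb{E}_{\infty}$-$k$-algebra $E$ with a unit in $\pi_2$, with $\pi_*(E)$ Noetherian and $\pi_0(E)$ of finite type over $k$, there is a finite detecting collection for $E$ consisting of nice rings (together with $k$).

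\emph{The Noetherian induction.} I would prove this claim by induction on $\dim\pi_0(E)$. First one performs ``setup'' steps, each preserving the standing hypotheses: (i) kill the finitely many (minimal) generators of $\pi_1(E)$ — killing a degree-$1$ class leaves $\pi_0$ and the degree-$2$ unit untouched, and removes a generator of $\pi_1$, so after finitely many steps $\pi_1(E)=0$ and $E$ is even-periodic; (ii) killing a nilpotent $c\in\pi_0(E)$ admits descent, since for $c^N=0$ the octahedral axiom places $\mo{cofib}(c^N)\simeq E\oplus\Sigma E$ in $\thick(\mo{cofib}(c))$, so we may take $\pi_0(E)$ reduced; (iii) passing to the quotients of $E$ by the (finitely many) minimal primes of $\pi_0(E)$ detects nilpotence — a direct argument on homotopy groups, using that a product of the minimal primes lies in the nilradical and that the nilradical is nilpotent by Noetherianity — so we may take $\pi_0(E)$ a domain. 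If $\dim\pi_0(E)=0$ then $\pi_0(E)$ is a finite extension of $k$ and $E$ is already nice. If $\dim\pi_0(E)>0$, then since $\chara k=0$, generic smoothness gives $f\in\pi_0(E)$ with $D(f)$ the nonempty smooth locus of $\Spec\pi_0(E)$; because $f\in\sqrt{I_{\mathrm{sing}}}$ and $I_{\mathrm{sing}}\subseteq\sqrt{(f)}$, a homotopy-groups computation shows $\{E\to E[f^{-1}],\ E\to E/\!\!/(\text{generators of }I_{\mathrm{sing}})\}$ detects nilpotence; the localization $E[f^{-1}]$ is even-periodic with $\pi_0$ a smooth $k$-domain, hence nice (after a final root trick if the degree-$2$ unit needs normalizing), while $E/\!\!/(\text{generators of }I_{\mathrm{sing}})$ again satisfies the hypotheses and has strictly smaller $\dim\pi_0$, so we recurse. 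Finiteness is automatic: every step branches finitely and the recursion tree has bounded depth.

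\emph{The main obstacle.} The delicate part is keeping homotopy groups under control through all of these quotients: proving precisely that killing the generators of $\pi_1$ kills $\pi_1$ without altering $\pi_0$ or the periodicity class (a characteristic-zero Koszul-duality computation), verifying that each quotient stays Noetherian with finite-type $\pi_0$ and a unit in $\pi_2$, and arranging the nested inductions (generators of $\pi_1$, minimal primes, Krull dimension) so that the whole procedure terminates — all alongside the bookkeeping needed to descend from the base field $\bb{Q}$ to the residue field $k$ and to keep every periodicity class in degree $2$.
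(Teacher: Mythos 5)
Your outline agrees with the paper up to the point where everything is reduced to the following situation: an even $\bb{E}_{\infty}$-$k$-algebra $S$ with a unit in $\pi_2$, $\pi_0(S)$ a smooth domain, and one must produce a detecting collection of the required form for $S/\!\!/x$, $x\in\pi_0(S)$. There the two arguments diverge, and your route has a genuine gap. The paper handles this step with Hironaka's embedded resolution of singularities: after blowing up along smooth centers and refining by Zariski/\'etale covers, the divisor $(x)$ becomes strict normal crossings with principal components, so every quotient one ever takes is by a \emph{single nonzerodivisor cutting out a smooth divisor}. This is exactly what preserves the standing hypotheses (even, Noetherian, unit in $\pi_2$, $\pi_0$ a smooth domain) at every stage, so that Mathew's surjectivity criterion and the final requirement $\pi_1(S_i)=0$ are never in danger. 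Your replacement — stratify by the singular locus and induct on $\dim\pi_0$ — destroys precisely this control: the ideal $I_{\mathrm{sing}}$ is not principal and its generators (like the generators of the nilradical and of minimal primes in your setup steps) are typically zerodivisors on $\pi_*(E)$, so the quotients $E/\!\!/(\text{generators})$ acquire odd homotopy from Koszul homology, and their $\pi_0$ need not be reduced or smooth. At that point neither Mathew's criterion (as used here, for even Noetherian rings with a degree-$2$ unit) nor the conclusion of the theorem applies to the rings you have built.

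Your proposed repair — kill the minimal generators of $\pi_1$ — is the unproved step, and you cannot wave it off as bookkeeping: killing a class $c\in\pi_1(E)$ forms $E\otimes_{\Lambda_k[y_1]}k$, and the K\"unneth/Koszul spectral sequence shows that any element of $\mathrm{Ann}_{\pi_0(E)}(c)$ not in the image of $c\cdot\pi_{-1}(E)$ contributes a \emph{new} class to $\pi_1$ of the quotient (and higher Tor terms contribute further odd classes in the $2$-periodic setting). So there is no argument that finitely many such killings terminate in an even ring, nor that Noetherianity of $\pi_*$ and finite generation of $\pi_0$ persist along the way; this is exactly the problem the paper's resolution-of-singularities step is designed to never encounter. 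A secondary gap: your minimal-primes step asserts that $\{E\to E/\!\!/\mf{p}_i\}_i$ detects nilpotence by ``a direct argument on homotopy groups,'' but for a non-principal $\mf{p}$ the natural map $\pi_*(T)/\mf{p}\,\pi_*(T)\to\pi_*(T\otimes_E E/\!\!/\mf{p})$ need not be injective (Koszul $H_1$ classes of the chosen generators obstruct it), so ``image nilpotent'' does not give a power of $x$ lying in $\mf{p}\,\pi_*(T)$, and the product-over-minimal-primes argument does not go through as stated; the elementary detection statement that is available without evenness is only the principal-element fracture $\{E\to E/\!\!/f,\ E\to E[f^{-1}]\}$, which does not by itself keep $\pi_1(E[f^{-1}])=0$ unless the odd homotopy you could not remove happens to be supported on $V(f)$. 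Either rearrange the argument so that, as in the paper, you only ever kill regular elements with smooth quotient (which is what Hironaka buys), or supply an actual proof that your setup steps terminate and preserve the hypotheses.
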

\begin{proof}
Let $R$ be a rational connective $\bb{E}_{\infty}$-algebra, with $\pi_0(R)$ a field~$k$.  Mathew proves that $R$ is a $k$-algebra \cite[~Proposition 2.14]{mathew2016residuefieldsclassrational}, and since $\pi_0$ commutes with colimits of connective $\bb{E}_{\infty}$-rings, $k$ is a finite extension of $\bb{Q}$, so is finitely presented.

Our goal is to build the collection $\{S_1,\ldots S_n\}$ by induction on the number of cells in $R$ if $R$ has finitely many cells.  The general claim will then follow since any finitely presented $R$ is a retract of some algebra with finitely many cells.  To explicitly build this, use that $\pi_i(R)$ is a finite dimensional $k$-vector space for any $i\geq 0$, and then build $T^{(0)}=k\to T^{(1)}\to\ldots\to R$ such that $T^{(i)}\to R$ is $i$-connected, and $T^{(i+1)}$ is obtained from $T^{(i)}$ by first adjoining new generators in degree $i+1$ and then quotienting out relations in this degree.  Then $R=\varinjlim T^{(i)}$, and compactness of $R$ implies that $T^{(n)}\to R$ splits for some~$n\gg 0$.

In the case $R=k$, we may take the empty collection, since $k$ detects nilpotence over~$k$.  Suppose by induction we have constructed such a collection $\{S_1,\ldots, S_n\}$ for $T$, and we attach to $T$ a new generator in degree $i$ or quotient out a relation in degree $i$ to get to~$R$.  We split into cases, the first three being the easiest:\\

\noindent\underline{\textbf{Case 1. $i$ is odd and we are adjoining a new generator in degree $i$.}}   Our new algebra is $R\simeq T\otimes_{k} \Lambda_k[t_i]$, with $t_i$ in degree~$i$.  Proposition~\ref{prop3.8}(f) implies that $\Lambda_{k}[t]\to k$ admits descent, and hence detects nilpotence.  Hence, $$T\otimes_{k}\Lambda_k[t_i]\otimes_{\Lambda_k[t_i]}k\simeq T$$ detects nilpotence over $R$, and we may take the same collection $\{S_1,\ldots, S_n\}$ used for $T$ for~$R$.\\

\noindent\underline{\textbf{Case 2.  $i$ is odd and we are adjoining a relation in degree~$i$.}}  In this case, there is a pushout diagram
\begin{center}
	\begin{tikzcd}
	\Lambda_k[t_i]\rar\dar & T\dar\\
	k\rar & R.
	\end{tikzcd}
\end{center}
By basechange compatibility (Proposition~\ref{prop3.8}(a)), the collection $S_j\otimes_{\Lambda_k[t_i]}k$ together with $k\otimes_{\Lambda_k[t_i]}k$ detect nilpotence over~$R$.  Since the $S_j$ are even, any map $\Lambda_k[t_i]\to S_j$ must factor over $k$, and $S_j\otimes_{\Lambda_k[t_i]}k$ is also even, with $$\pi_0(S_j\otimes_{\Lambda_k[t_i]}k)\simeq \pi_0(S_j)[x]$$ a polynomial ring over~$\pi_0(S_j)$.  Now, $k\otimes_{\Lambda_k[t_i]}k\simeq k[\sigma t_i]$ is a free algebra on a class $\sigma t_i$ in degree~$i+1$.  By Proposition~\ref{prop3.8}(e), the collection $\{k, k[(\sigma t_i)^{\pm 1}]\}$ detects nilpotence over~$k[\sigma t_i]$.  Now, we can use that there is a map $k[(\sigma t_i)^{\pm 1}]\to k[(x_2)^{\pm 1}]$ to the free algebra on an invertible class in degree 2 (taking $\sigma t_i$ to $x_2^{(i+1)/2}$) which admits descent, hence detects nilpotence.  Hence, we may take for $R$ the collection $$\{S_1\otimes_{\Lambda_k[t_i]}k,\ldots, S_n\otimes_{\Lambda_k[t_i]}k, k[x_2^{\pm 1}]\},$$ satisfying the desired properties.\\

\noindent\underline{\textbf{Case 3.  $i$ is even and we are adjoining a new generator in degree~$i$.}}  This is similar to Case 2.  In this case, $R=T\otimes_k k[t_i]$ with $t_i$ a polynomial generator in degree $i$, and we may use for $R$ the collection $$\{S_1\otimes_k k[t_i],\ldots, S_n\otimes_k k[t_i], k[x_2^{\pm 1}]\}.$$

\noindent\underline{\textbf{Case 4.  $i$ is even and we are adjoining a relation in degree~$i$.}}  In this case, there is an algebra map $k[t_i]\to T$, with~$R\simeq T\otimes_{k[t_i]}k$.  By basechange compatibility, the collection of $S_j\otimes_{k[t_i]}k$ and $k\otimes_{k[t_i]}k$ jointly detects nilpotence over~$R$.  The algebra $k\otimes_{k[t_i]}k\simeq \Lambda_{k}[\sigma t_i]$ is free on a generator in odd degree, so $k$ detects nilpotence over this algebra.

We reduce to showing that if $S$ is an even $\bb{E}_{\infty}$-$k$-algebra with a unit in $\pi_2(S)$ and with $\pi_0(S)$ a smooth integral domain over $k$, then for any $x\in \pi_0(S)$, there exists algebras $S_1^{\prime},\ldots,S_m^{\prime}$ with the same property, and maps $S/x\to S_j^{\prime}$ which jointly detect nilpotence.  By \cite[~Theorem 1.3/4]{mathew2016residuefieldsclassrational}, we find that a collection of maps $\{S/x\to S_j^{\prime}\}$ between even Noetherian $\bb{E}_{\infty}$-$k$-algebras with a unit in degree 2 detects nilpotence if and only if the induced map $$\coprod_{j=1}^{m}\mo{Spec}(\pi_0(S_j^{\prime}))\to \mo{Spec}(\pi_0(S/x))$$ is surjective.  Given an even 2-periodic $\bb{E}_{\infty}$-$k$-algebra $S$, there are a number of operations we can apply to $S$ make other algebras with these same properties.  Namely, we may adjoint a polynomial variable to $\pi_0(S)$, we may localize $\pi_0(S)$ at any multiplicatively closed set, and we may quotient out $\pi_0(S)$ by a regular sequence.  In particular, starting from our $S$, we can get maps to \'{e}tale covers, Zariski covers, and (affine covers of) blowups along smooth centers.

Consider the divisor cut out by $x\in\pi_0(S)$ on~$\mo{Spec}(S)$.  By Hironaka's theorem on embedded resolution of singularities \cite[~Corollary 3]{Hironaka}, there is a sequence of blowups along smooth centers $X_r\to\ldots \to X_0=\mo{Spec}(\pi_0(S))$ such that the (reduced subscheme structure on the) pullback of $(x)$ to $X_r$ is a normal crossing divisor.  By what we have said, $X_r$ has an affine open cover by schemes $\mo{Spec}(\pi_0(A_j))$ for even 2-periodic smooth $\bb{E}_{\infty}$-$S$-algebras~$A_j$.  Refining this by an \'{e}tale cover to assume $x$ pulls back to a strict normal crossing divisor, and then further if need be to assume that every irreducible component of the pullback of $x$ is given by a principal divisor, we may assume that we have a collection of maps $S\to A_j$ of even 2-periodic smooth $\bb{E}_{\infty}$-rings detecting nilpotence (since the induced map of Zariski spectra of their $\pi_0$ is jointly surjective) such that the image of $x$ in each $A_j$ is either a unit, or can be written as $x=y_1\ldots y_{r_j}$ with $\pi_0(A_j)/y_n$ a $k$-smooth integral domain for each~$n$.  Finally, we use that we have maps $S/x\to A_j/y_k$ which jointly detect nilpotence, again using the result of Mathew and the fact that the induced map on Zariski spectra is surjective.  Therefore, taking the collection $\{A_j/y_k\}_{j,k}$ as our $S_1^{\prime},\ldots, S_m^{\prime}$, the claim is shown.
\end{proof}
\begin{corollary}\label{cor3.10}
If $R$ is a connective finitely presented $\bb{E}_{\infty}$-$\bb{Q}$-algebra with $\pi_0(R)$ a field, then $\mo{Spc}(R)$ is a Noetherian topological space, and there is a collection of ``residue fields'' for~$R$.  These come in the form of $\bb{E}_{\infty}$-algebra maps $R\to L_j$ for $j\in J$ some index set, with each $L_j$ an even 2-periodic $\bb{E}_{\infty}$-$k$-algebra with $\pi_0(L_j)$ a field, such that the collection $\{R\to L_j\}\cup\{R\to k\}$ detects nilpotence over~$R$.
\end{corollary}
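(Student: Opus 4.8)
The plan is to deduce the corollary formally from Theorem~\ref{th3.9} together with Mathew's structure theory for Noetherian rational $\bb{E}_{\infty}$-rings. Applying Theorem~\ref{th3.9} to $R$, I obtain a finite collection $\{S_1,\dots,S_n\}$ of $\bb{E}_{\infty}$-$R$-algebras under $R$ such that $\{S_1,\dots,S_n,k\}$ detects nilpotence over $R$, with each $S_i$ satisfying $\pi_1(S_i)=0$, admitting a unit in $\pi_2(S_i)$, and having $\pi_0(S_i)$ a smooth finite-type integral domain over $k$. First I would observe that the unit in $\pi_2(S_i)$ makes $S_i$ $2$-periodic, so $\pi_{2m+1}(S_i)\cong\pi_1(S_i)=0$ and $\pi_*(S_i)\cong\pi_0(S_i)[\beta^{\pm 1}]$ with $\beta$ in degree $2$; since $\pi_0(S_i)$ is of finite type over $k$ it is Noetherian, hence $\pi_*(S_i)$ is Noetherian and $S_i$ is a Noetherian rational $\bb{E}_{\infty}$-ring (note that $S_i$, being an $\bb{E}_{\infty}$-$R$-algebra, is automatically a $k$-algebra by \cite[Proposition~2.14]{mathew2016residuefieldsclassrational}).

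Next I would invoke Mathew's computation of the Balmer spectrum and construction of residue fields for Noetherian rational $\bb{E}_{\infty}$-rings \cite[Theorem~1.4]{mathew2016residuefieldsclassrational}. For each $i$, using $2$-periodicity to collapse the homogeneous spectrum, this identifies $\mo{Spc}(S_i)$ with the Zariski spectrum $\mo{Spec}(\pi_0(S_i))$, which is a Noetherian topological space; and it produces a set $J_i$ of $\bb{E}_{\infty}$-$S_i$-algebra maps $\{S_i\to L_{ij}\}_{j\in J_i}$ that detects nilpotence over $S_i$, with each $L_{ij}$ an even $2$-periodic $\bb{E}_{\infty}$-$k$-algebra whose $\pi_0$ is a field.

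Finally I would assemble the two conclusions. For the topological statement, Proposition~\ref{prop3.8}(c) applied to the nilpotence-detecting family $\{S_1,\dots,S_n,k\}$ gives a surjection
$$\Big(\coprod_{i=1}^{n}\mo{Spc}(S_i)\Big)\sqcup\mo{Spc}(k)\longrightarrow\mo{Spc}(R)$$
of topological spaces; the source is a finite coproduct of Noetherian spaces (each $\mo{Spc}(S_i)\cong\mo{Spec}(\pi_0(S_i))$ is Noetherian and $\mo{Spc}(k)$ is a point), hence Noetherian, and the continuous surjective image of a Noetherian space is again Noetherian — pull a descending chain of closed subsets of $\mo{Spc}(R)$ back along the surjection, where it stabilizes, then push forward using surjectivity — so $\mo{Spc}(R)$ is Noetherian. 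For the residue fields, set $J=\coprod_{i=1}^{n}J_i$ and take the composites $R\to S_i\to L_{ij}$ for $(i,j)\in J$; since $\{S_1,\dots,S_n,k\}$ detects nilpotence over $R$, each $\{S_i\to L_{ij}\}_{j\in J_i}$ detects nilpotence over $S_i$, and $\{k\to k\}$ trivially detects nilpotence over $k$, Proposition~\ref{prop3.8}(d) shows that $\{R\to L_{ij}\}_{(i,j)\in J}\cup\{R\to k\}$ detects nilpotence over $R$, and by construction each $L_{ij}$ is an even $2$-periodic $\bb{E}_{\infty}$-$k$-algebra with $\pi_0(L_{ij})$ a field.

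Essentially all of the work is in Theorem~\ref{th3.9}; the only points that need care are verifying that the algebras $S_i$ it produces are Noetherian in the sense required by Mathew's theorem — which is exactly where $2$-periodicity, the vanishing $\pi_1(S_i)=0$, and finite-typeness of $\pi_0(S_i)$ over $k$ are used — and the purely topological observation that a surjection of spaces transports Noetherianity. I do not expect a genuine obstacle here: this corollary is bookkeeping layered on top of the theorem and Mathew's results.
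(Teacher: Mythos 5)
Your proposal is correct and follows essentially the same route as the paper: Theorem~\ref{th3.9} plus Proposition~\ref{prop3.8}(c) and the fact that a continuous surjective image of a Noetherian space is Noetherian give the first claim, and Theorem~\ref{th3.9} combined with Mathew's Theorem~1.3/1.4 (via the transitivity of nilpotence detection, Proposition~\ref{prop3.8}(d)) gives the residue fields. The extra details you supply — the $2$-periodicity argument showing $\pi_*(S_i)$ is Noetherian and the explicit composition of detecting families — are exactly the bookkeeping the paper's terse proof leaves implicit.
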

\begin{proof}
By Theorem~\ref{th3.9} and Proposition~\ref{prop3.8}(c), there is a surjection from the Noetherian topological space $\coprod_{i=1}^{n}\mo{Spc}(S_i)\coprod \mo{Spc}(k)\to \mo{Spc}(R)$, so that $\mo{Spc}(R)$ must be Noetherian as well, proving the first claim.  The second claim follows by Theorem~\ref{th3.9} and \cite[~Theorem 1.3/4]{mathew2016residuefieldsclassrational}.
\end{proof}
\begin{remark}
We only prove existence of residue fields in the above, not ``uniqueness.''  Below, we will prove ``uniqueness'' of the residue field at the closed point.  We do not say anything about whether or not there can be residue fields $R\to L_1$, $R\to L_2$, both having the same image (different from the closed point) under the induced map of Balmer spectra, but with~$L_1\otimes_{R}L_2\simeq 0$.  One would not expect this to happen, but we do not rule out the possibility in the present work.
\end{remark}
\begin{theorem}\label{th3.13}
If $R$ is a connective finitely presented $\bb{E}_{\infty}$-$\bb{Q}$-algebra with $\pi_0(R)=k$ a field, then the exact-nilpotence condition holds for~$\mo{Mod}_R^{perf}$.
\end{theorem}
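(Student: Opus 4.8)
The plan is to manufacture a single nonzero object $z\in\mo{Mod}_{R}^{perf}$ such that $z\otimes_{R}g$ base-changes to a zero morphism along every ring map in a fixed nilpotence-detecting family, and then to conclude with Proposition~\ref{prop3.8}(b). Before that, a few reductions. By Proposition~\ref{prop3.1}, $\mo{Mod}_{R}^{perf}$ is a local tt-category (since $\pi_{0}(R)=k$ is local), and, as observed in the remark after Corollary~\ref{cor3.2}, $-\otimes_{R}k$ is a conservative functor; hence the zero ideal $\mathfrak{p}_{0}:=\ker(-\otimes_{R}k)$ is a prime, and being the smallest thick $\otimes$-ideal it is the unique closed point of $\mo{Spc}(R)$. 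Given a fiber sequence $y\xrightarrow{g}\unit\xrightarrow{f}x$, the exact-nilpotence condition is unaffected by replacing it with its dual $x^{\vee}\to\unit\to y^{\vee}$ (a tt-functor such as dualization preserves $\otimes$-nilpotence), so, exactly as in the proof of Proposition~\ref{prop3.1}, I may assume $g\otimes_{R}k\simeq 0$. Finally, invoke Corollary~\ref{cor3.10}: fix a nilpotence-detecting collection $\{R\to L_{j}\}_{j\in J}\cup\{R\to k\}$ with each $L_{j}$ even $2$-periodic and $\pi_{0}(L_{j})$ a field, so that each $\pi_{*}(L_{j})$ is a graded field; and recall that $\mo{Spc}(R)$ is a Noetherian topological space.

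The technical heart is the ``uniqueness of the residue field at the closed point'' anticipated in the remark preceding this theorem, which I would establish in the form: \emph{if $R\to L$ is a map of $\bb{E}_{\infty}$-rings with $\pi_{*}(L)$ a graded field and $-\otimes_{R}L\colon\mo{Mod}_{R}^{perf}\to\mo{Mod}_{L}^{perf}$ conservative, then $R\to L$ factors through the truncation $R\to k=\pi_{0}(R)$}. The starting point is that for $a\in\pi_{n}(R)$ with $n\ge 1$ the Koszul object $R/\!\!/a:=\mo{cofib}(\Sigma^{n}R\xrightarrow{a}R)$ has $\pi_{0}(R/\!\!/a)\cong\pi_{0}(R)\neq 0$ (as $R$ is connective and $n\ge 1$), hence is nonzero; by conservativity $R/\!\!/a\otimes_{R}L\simeq\mo{cofib}(\Sigma^{n}L\xrightarrow{\bar a}L)$ is nonzero, so $\bar a\in\pi_{n}(L)$ is not a unit, and in a graded field this forces $\bar a=0$. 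Thus $\pi_{\ge 1}(R)$ maps to $0$ in $\pi_{*}(L)$, and an induction over a cell structure of $R$ (available because $R$ is finitely presented) upgrades this to the desired factorization of $\bb{E}_{\infty}$-rings through $\pi_{0}(R)$. Since $g\otimes_{R}k\simeq 0$, it follows that $g\otimes_{R}L\simeq g\otimes_{R}k\otimes_{k}L\simeq 0$ for every conservative residue field $L$; contrapositively, $g\otimes_{R}L_{j}\not\simeq 0$ implies that $-\otimes_{R}L_{j}$ is not conservative, i.e.\ that the map $\mo{Spc}(L_{j})\to\mo{Spc}(R)$ induced by $-\otimes_{R}L_{j}$ misses $\mathfrak{p}_{0}$.

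To finish, use that $\mo{Spc}(R)$ is Noetherian: the singleton $\{\mathfrak{p}_{0}\}$ is then closed with quasi-compact open complement, so by the classification of (radical) thick $\otimes$-ideals it equals $\mo{supp}(z)$ for some object $z$, which is nonzero. I claim this $z$ works. Fix $j\in J$. If $g\otimes_{R}L_{j}\simeq 0$ then $(z\otimes_{R}g)\otimes_{R}L_{j}\simeq(z\otimes_{R}L_{j})\otimes_{L_{j}}(g\otimes_{R}L_{j})$ is a zero morphism. If instead $g\otimes_{R}L_{j}\not\simeq 0$, then by the previous paragraph the map $\mo{Spc}(L_{j})\to\mo{Spc}(R)$ avoids $\mathfrak{p}_{0}=\mo{supp}(z)$, so the support of $z\otimes_{R}L_{j}$ in $\mo{Spc}(L_{j})$ is empty; as $\mo{Mod}_{L_{j}}^{perf}$ is local this gives $z\otimes_{R}L_{j}\simeq 0$, and again $(z\otimes_{R}g)\otimes_{R}L_{j}$ is a zero morphism. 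Finally $(z\otimes_{R}g)\otimes_{R}k\simeq(z\otimes_{R}k)\otimes_{k}(g\otimes_{R}k)$ is a zero morphism because $g\otimes_{R}k\simeq 0$. Hence $z\otimes_{R}g$ becomes a (trivially $\otimes$-nilpotent) zero morphism under base change along every member of the collection $\{R\to L_{j}\}_{j}\cup\{R\to k\}$, so $z\otimes_{R}g$ is $\otimes$-nilpotent by Proposition~\ref{prop3.8}(b); since $z\neq 0$, the exact-nilpotence condition holds for $\mo{Mod}_{R}^{perf}$.

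I expect the main obstacle to be the middle paragraph, and specifically the upgrade from ``$\pi_{\ge 1}(R)$ dies in $\pi_{*}(L)$'' to an honest factorization of $\bb{E}_{\infty}$-rings $R\to\pi_{0}(R)\to L$; the rest is bookkeeping with Balmer supports and Proposition~\ref{prop3.8}. The conceptual point is that this uniqueness is exactly what pins the ``bad locus'' $\{\,\mathfrak{p}\in\mo{Spc}(R):\mathfrak{p}=\ker(-\otimes_{R}L_{j})\text{ for some }j\text{ with }g\otimes_{R}L_{j}\not\simeq 0\,\}$ away from the closed point, which is what allows an object supported precisely at $\mathfrak{p}_{0}$ to annihilate $g$ up to $\otimes$-nilpotence.
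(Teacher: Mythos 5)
Your overall architecture --- reduce to $g\otimes_R k\simeq 0$, take $z$ supported exactly at the closed point (using Noetherianity of $\mo{Spc}(R)$), and verify $\otimes$-nilpotence of $z\otimes g$ against the detecting family $\{L_j\}\cup\{k\}$ via Proposition~\ref{prop3.8}(b), splitting into the cases where $\mo{Spc}(L_j)\to\mo{Spc}(R)$ hits or misses the closed point --- is exactly the paper's, and that bookkeeping is fine. The genuine gap is your ``technical heart'': the claim that a conservative residue field $R\to L$ factors through $k=\pi_0(R)$. What you actually prove is only the first, easy step, which the paper also dispatches in one sentence: every class $a\in\pi_{\geq 1}(R)$ maps to a non-unit, hence to $0$, in the graded field $\pi_*(L)$, by applying conservativity to $\mo{cofib}(a)$. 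The passage from ``$\pi_{\geq 1}(R)$ dies in $\pi_*(L)$'' to ``the $\bb{E}_\infty$-ring map $R\to L$ is homotopic to one factoring through the truncation $R\to k$'' is not bookkeeping and is not delivered by ``an induction over a cell structure of $R$'': a map of $\bb{E}_\infty$-rings killing all positive-degree homotopy elements need not factor through the truncation, and establishing that it does in this situation is an obstruction-theoretic statement that occupies essentially the whole of the paper's proof.

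Concretely, a cell-by-cell induction hits two obstacles. First, when a cell is attached along an odd-degree free algebra $\Lambda_k[t_i]$, the relevant mapping space $\mo{Hom}_{\mo{CAlg}}(\Lambda_k[t_i],L)\simeq \Omega^{\infty}\Sigma^{-i}L$ has $\pi_1\cong\pi_{i+1}(L)\neq 0$ (this is \emph{even} homotopy of $L$), so factorizations neither extend uniquely nor automatically across such pushouts; this is precisely why the paper does \emph{not} induct over the cells of $R$ but builds a bespoke tower $R=R_0\to R_1\to\cdots$ killing only even positive-degree classes, where the analogous obstruction group is $\pi_{2n+1}(L)=0$. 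Second, your Koszul-object trick uses conservativity of $-\otimes_R L$ on $\mo{Mod}_R^{perf}$, a statement about $R$-modules; the intermediate algebras of any induction acquire \emph{new} even homotopy classes not coming from $\pi_*(R)$, and the same trick applies to them only because the paper arranges each $R_i$ to be perfect as an $R$-module, and even then it requires the integrality argument (forcing $1\otimes x_2^{nj}=0$ in $R_i/y\otimes_R L[x_2]$) to exclude units in the image; finally, the colimit $A$ must be rigidified via \cite[Proposition~4.9]{mathew2016residuefieldsclassrational} before one can conclude that $R\to L$ factors through $k$, whence $k\otimes_R L\neq 0$ and $g\otimes_R L\simeq 0$. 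None of this is supplied by your middle paragraph, so the key factorization --- which you yourself flag as the main obstacle --- remains unproven, and with it the theorem.
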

\begin{proof}
Since $\mo{Spc}(R)$ is Noetherian, there is some object $Z\in\mo{Mod}_R^{perf}$ with $\mo{supp}(Z)$ equal to the unique closed point, we fix a choice of such an object.  Take a fiber sequence $$y\xrightarrow{g}\unit\xrightarrow{f}x$$ of perfect $R$-modules, and suppose without loss of generality (up to dualizing this sequence) that~$g\otimes_{R}k\simeq 0$.  We claim that $Z\otimes g$ is $\otimes$-nilpotent.  By Corollary~\ref{cor3.10}, there is a set of residue fields $\{L_j\}_{j\in J}\cup \{k\}$ for $R$, and so it suffices to check that $L_j\otimes_{R}(Z\otimes g)$ is $\otimes$-nilpotent for each of our constructed residue fields~$L_j$.  If the map $\mo{Spc}(L_j)\to \mo{Spc}(R)$ has image different from the closed point, then $L_j\otimes_{R}Z\simeq 0$, so we are reduced to the case when $L_j\otimes_{R}-$ has trivial kernel on perfect $R$-modules.  Over $L_j$, either $g\otimes_{R}L_j$ or $f\otimes_{R}L_j$ is zero, and we claim that $g\otimes_{R}L_j$ is zero, which would follow if we knew that $k\otimes_{R}L_j$ was nonzero.

Write $L[x_2^{\pm 1}]:=L_j$, with $L=\pi_0(L_j)$ a field and $x_2$ a chosen unit in $\pi_2(L_j)$.  Write $L[x_2]$ for the connective cover of $L[x_2^{\pm 1}]$ (noting $R\to L[x_2^{\pm 1}]$ factors over $R\to L[x_2]$ by connectivity of $R$).  Suppose towards a contradiction that~$L[x_2^{\pm 1}]\otimes_{R}k\simeq 0$.  First, we note that $\pi_*(R)\to \pi_*(L[x_2^{\pm 1}])$ must factor over $k$, or else there is some $y\in\pi_{2n}(R)$ mapping to a unit multiple of $x_2^n$ for some $n>0$, which would imply that $\mo{cofib}(y)\otimes_{R}L[x_2^{\pm 1}]\simeq 0$, contradicting the choice of map~$R\to L[x_2^{\pm 1}]$.  Suppose that we have inductively constructed an $\bb{E}_{\infty}$-$R$-algebra $R_i$ which is perfect as an $R$-module, with $R_0=R$, such that $R\to L[x_2]$ factors uniquely as~$R\to R_i\to L[x_2]$, and such that $\pi_*(R_i)\to \pi_*(L[x_2^{\pm 1}])$ factors over $k$.

If there is some $n>0$ with $\pi_{2n}(R_i)\neq 0$, take $n>0$ minimal with this property, choose a nonzero $z_{2n}\in \pi_{2n}(R_i)$, and define~$R_{i+1}:=R\otimes_{k[z_{2n}]}k$.  Upon applying $\mo{Hom}_{\mo{CAlg}}(-,L[x_2^{\pm 1}])$ to the pushout diagram \begin{center}
	\begin{tikzcd}
	k[z_{2n}]\rar\dar & R_i\dar\\
	k \rar & R_{i+1},
	\end{tikzcd}
\end{center}
we obtain a fiber sequence of spaces $$\mo{Hom}_{\mo{CAlg}}(R_{i+1},L[x_2^{\pm 1}])\to \mo{Hom}_{\mo{CAlg}}(R_{i},L[x_2^{\pm 1}])\to \mo{Hom}_{\mo{CAlg}}(k[z_{2n}],L[x_2^{\pm 1}]).$$  Since $\pi_1(\mo{Hom}_{\mo{CAlg}}(k[z_{2n}],L[x_2^{\pm 1}]))\simeq 0$, the extension of $R_i\to L[x_2^{\pm 1}]$ to $R_i\to R_{i+1}\to L[x_2^{\pm 1}]$ is unique. If $\pi_{2n}(R_i)=0$ for $n>0$, define~$R_{i+1}:=R_i$.

We claim that $R_{i+1}$ has the desired properties.  Indeed, suppose that the map $\pi_*(R_{i+1})\to \pi_*(L[x_2^{\pm 1}])$ has image larger than $k$, so that there is some element $y\in \pi_{2n}(R_{i+1})$ mapping to a unit multiple of $x_2^n$ for some~$n>0$.  Since $R_{i+1}$ is a perfect $R$-module by construction, it suffices to show that, should such a $y$ exist, then $$R_{i+1}/y\otimes_{R}L[x_2^{\pm 1}]\simeq 0,$$ or equivalently that $x_2$ is nilpotent in the algebra $$R_{i+1}/y\otimes_{R}L[x_2].$$   The splitting $R_{i+1}\otimes_{R}L[x_2]\to L[x_2]$ shows that the image of $y\otimes 1$ in $\pi_*(R_{i+1}\otimes_{R}L[x_2])$ is not nilpotent.  Again, by construction of $R_i$, $R_{i+1}\otimes_{R}L[x_2]$ is a perfect $L[x_2]$-module, so that the (graded) ring $$A:=\pi_{2n *}(R_i\otimes_{R}L[x_2])$$ is finitely generated as a module over the polynomial subring $L[y\otimes 1]$, and in particular, $1\otimes x_2^n$ is integral over this subring.  Whence, there is some monic polynomial $f(z)$ over $L[y\otimes 1]$, which must be homogeneous for $|y\otimes 1|=2n$, $|z|=2n$.  In particular, the polynomial $f(z)$ takes the form $$f(z)=z^j+a_{j-1}(y\otimes 1)z^{j-1}+\ldots + a_1(y\otimes 1)^{j-1}z+a_0(y\otimes 1)^{j},$$ for some $j>0$ and some~$a_i\in L$.  From this, it follows that $1\otimes x^{nj}\simeq 0$ in $R_i/y\otimes_{R}L[x_2]$, and hence $R_i/y\otimes_{R}L[x_2^{\pm 1}]\simeq 0$, contradicting the fact that $M\otimes_{R}L[x_2^{\pm 1}]$ is nonzero for every nonzero perfect $R$-module~$M$.

Our sequence $R_i\to R_{i+1}\to\ldots$ terminates only if $R_i$ has $\pi_{2n}(R_i)=0$ for all $n>0$ at some finite stage of the construction.  In any case, since $\pi_n(R)$ is finite dimensional for all $n>0$, the maps $\ldots \to R_i\to R_{i+1}\to\ldots$ become increasingly connective, and upon taking the colimit $A:=\varinjlim_{i}R_i$, one finds that $\pi_{2n}(A)=0$ for $n>0$, and $R\to L[x_2^{\pm 1}]$ factors uniquely as~$R\to A\to L[x_2^{\pm 1}]$.  Now, there is a map $A\otimes_{k} k[x_2^{\pm 1}]\to L[x_2^{\pm 1}]$, and the ring $A\otimes_{k}k[x_2^{\pm 1}]$ satisfies the hypothesis of \cite[~Proposition 4.9]{mathew2016residuefieldsclassrational} by construction.  Now, by Mathew's  Proposition~4.9, maps $$A\otimes_{k}k[x_2^{\pm 1}]\to L[x_2^{\pm 1}]$$ are in bijection with maps $$\pi_*(A\otimes_{k}k[x_2^{\pm 1}]\to L[x_2^{\pm 1}])\to \pi_*(L[x_2^{\pm 1}]).$$  In this way, we see that the map $A\otimes_{k}k[x_2^{\pm 1}]\to L[x_2^{\pm 1}]$ is homotopic to the composite $$A\otimes_{k}k[x_2^{\pm 1}]\to k\otimes_{k}k[x_2^{\pm 1}]\to L[x_2^{\pm 1}].$$  Finally, the following commutative diagram \begin{center}
	\begin{tikzcd}
		A\rar\dar & A\otimes_{k} k[x_2^{\pm 1}]\dar\\
		k\rar & k\otimes_{k}k[x_2^{\pm 1}],
	\end{tikzcd}
\end{center}
shows that $A\to L[x_2^{\pm 1}]$ factors as $A\to k\to L[x_2^{\pm 1}]$, so the same holds of~$R\to A\to k \to L[x_2^{\pm 1}]$.  But then $$k\otimes_{R}L[x_2^{\pm 1}]\simeq (k\otimes_{R}k)\otimes_{k}L[x_2^{\pm 1}],$$ which cannot be zero, as $k\otimes_{R}k$ is nonzero.  This yields the desired contradiction.
\end{proof}

Finally, we state the main theorems of the section, first showing how to get new examples where the exact-nilpotence theorem holds from known ones, then summarizing the known cases where the condition is known to hold.
\begin{theorem}\label{th3.14}
	The class of local tt-categories for which the exact-nilpotence condition holds is closed under filtered colimits along local transition maps.
\end{theorem}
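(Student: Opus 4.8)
The plan is to reduce the whole statement to a single stage of the diagram. Write $\mc{T}=\colim_{i\in I}\mc{T}_i$ for the colimit, taken in the $\infty$-category of tt-categories, of a filtered diagram $(\mc{T}_i)_{i\in I}$ of local tt-categories satisfying the exact-nilpotence condition and having every transition functor $\mc{T}_i\to\mc{T}_j$ local, and let $F_i\colon\mc{T}_i\to\mc{T}$ denote the structure functors. First I would record how such a colimit behaves. Through the equivalence $2\text{-}\mo{Ring}\simeq\mo{CAlg}(\mo{Pr}^{L,\omega}_{\mo{st}})$, filtered colimits of tt-categories are computed on underlying $\infty$-categories; and since a filtered colimit of idempotent-complete stable $\infty$-categories is again idempotent-complete, $\mc{T}$ is simply the naive filtered colimit. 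Hence: every object of $\mc{T}$ is equivalent to $F_i(c)$ for some $i$ and some $c\in\mc{T}_i$; every morphism between two objects coming from a common stage $\mc{T}_i$ is, after enlarging $i$, the image under $F_i$ of a morphism of $\mc{T}_i$; and two parallel morphisms of $\mc{T}_i$ that become homotopic in $\mc{T}$ are already homotopic in $\mc{T}_j$ for some $j\ge i$. Rigidity is inherited, since $F_i$ is symmetric monoidal and so carries dualizable objects to dualizable objects, so $\mc{T}$ is indeed a tt-category. Moreover each $F_i$ is local: if $F_i(c)\simeq0$ then $\id_{F_i(c)}$ is null in $\mc{T}$, hence becomes null in some $\mc{T}_j$, so the image of $c$ in $\mc{T}_j$ vanishes, and locality of the transition functor $\mc{T}_i\to\mc{T}_j$ gives $c\simeq0$.

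Granting this, I would check that $\mc{T}$ is a local tt-category. It is nonzero because $\unit_{\mc{T}}\simeq F_i(\unit_{\mc{T}_i})$ with $F_i$ local and $\unit_{\mc{T}_i}\not\simeq0$. For locality, suppose $a\otimes b\simeq0$ in $\mc{T}$ and choose a common stage $j$ with $a\simeq F_j(a_j)$ and $b\simeq F_j(b_j)$. Then $F_j(a_j\otimes b_j)\simeq a\otimes b\simeq0$, so $a_j\otimes b_j\simeq0$ since $F_j$ is local, so $a_j\simeq0$ or $b_j\simeq0$ because $\mc{T}_j$ is local, and applying $F_j$ gives $a\simeq0$ or $b\simeq0$.

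For the exact-nilpotence condition, let $y\xrightarrow{g}\unit\xrightarrow{f}x$ be a fiber sequence in $\mc{T}$. Write $x\simeq F_j(x_j)$, and after enlarging $j$ write $f$ as the image of a morphism $f_j\colon\unit_{\mc{T}_j}\to x_j$; set $y_j:=\mo{fib}(f_j)$, with fiber map $g_j$. Since $F_j$ is exact it carries the fiber sequence $y_j\xrightarrow{g_j}\unit_{\mc{T}_j}\xrightarrow{f_j}x_j$ to a copy of the original one. Applying the exact-nilpotence condition in $\mc{T}_j$ yields a nonzero $z_j\in\mc{T}_j$ with, say, $z_j\otimes f_j$ being $\otimes$-nilpotent in $\mc{T}_j$; the case of $g_j$ is handled identically. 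Put $z:=F_j(z_j)$. Then $z\not\simeq0$ because $F_j$ is local, and $z\otimes f\simeq F_j(z_j\otimes f_j)$ is $\otimes$-nilpotent because $F_j$ is symmetric monoidal and exact (apply $F_j$ to the null morphism $(z_j\otimes f_j)^{\otimes n}$). This establishes the exact-nilpotence condition for $\mc{T}$.

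The only step requiring real care is the first one: that the colimit of tt-categories is the naive colimit on underlying $\infty$-categories — in particular that idempotent-completeness comes for free in a filtered colimit — and, correspondingly, that the structure functors $F_i$ are again local. Everything after that is mechanical transport of data (an object, a morphism, a fiber sequence, the witness $z$, and one nilpotence relation) between $\mc{T}$ and a fixed stage $\mc{T}_j$. If one prefers not to invoke that a filtered colimit of idempotent-complete $\infty$-categories is idempotent-complete, the same argument works upon replacing everywhere \emph{image of an object (resp.\ morphism) of $\mc{T}_j$} by \emph{retract of such an image}: one realizes $f$ at a stage only up to pre- and post-composing with the maps of a retraction, and then uses that a retract of a nonzero object is nonzero and a retract of a $\otimes$-nilpotent morphism is $\otimes$-nilpotent.
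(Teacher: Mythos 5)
Your proof is correct and follows essentially the same route as the paper: realize the fiber sequence at a finite stage $\mc{T}_j$, apply the exact-nilpotence hypothesis there, push the witness $z_j$ forward, and use locality of the canonical functors $F_i$ (which you correctly deduce from locality of the transition functors) to see that $F_j(z_j)\neq 0$. The one point where you diverge is the justification of descent of objects and morphisms: your main line rests on the blanket assertion that a filtered colimit of idempotent-complete stable $\infty$-categories is again idempotent-complete, stated without proof or reference. This is not a routine fact --- a coherent idempotent is a diagram indexed by the infinite simplicial set $\mathrm{Idem}$, so it need not descend to a finite stage, and the colimit of tt-categories in the sense of the paper is \emph{a priori} the idempotent completion of the naive colimit. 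The paper deliberately avoids this issue: it only asserts that $x$ is a \emph{summand} of the image of some $x'\in\mc{C}_i$ and then adds the complementary summand to $x$ and $y$, which is exactly the retract-based fallback you sketch in your last paragraph (and your transfer of nilpotence along the retraction of arrows is fine, since $f$ and $g$ factor through the images of $f_j$ and $g_j$ compatibly). So either supply a reference/proof for the idempotent-completeness claim or run the fallback as the primary argument. Two small remarks: you prove in addition that $\mc{T}$ is a local tt-category, which the paper leaves implicit --- a worthwhile check --- but note that in the fallback (retract) regime your locality argument as written no longer applies verbatim; one can repair it, e.g.\ by observing that locality is insensitive to idempotent completion (the Balmer spectrum does not change under idempotent completion), or by splitting the relevant idempotents at a finite stage.
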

\begin{proof}
Let $\mc{C}:=\varinjlim_{i\in I}\mc{C}_i$ be a filtered colimit along local transition maps of local tt-categories for which the exact-nilpotence condition holds.  Consider a fiber sequence $$y\xrightarrow{g}\unit \xrightarrow{f}x$$ in~$\mc{C}$.  Then, there exists some $i\in I$ and some $x^{\prime}\in\mc{C}_i$ such that $x$ is a summand of the image of~$x^{\prime}$.  Up to adding the complementary summand to $x$ and $y$ (which won't affect the nilpotence of $f,g$ tensored with a nonzero object), we may assume that~$x=x^{\prime}$.  There is some $j\in I$, $j>i$, and a map $\unit\xrightarrow{f^{\prime}}x^{\prime}$ in $\mc{C}_j$ having image $\unit\xrightarrow{f}x$ in~$\mc{C}$.  Taking $0\neq z\in\mc{C}_j$ such that $z\otimes f^{\prime}$ or $z\otimes \mo{fib}(f^{\prime})$ is $\otimes$-nilpotent, the image of $z$ has this same property in~$\mc{C}$.  As the transition maps between the $\mc{C}_i$ were local, the image of $z$ in $\mc{C}$ is nonzero, and the exact-nilpotence condition holds for~$\mc{C}$.
\end{proof}
\begin{theorem}\label{th3.15} The class of local tt-categories where the exact-nilpotence condition holds contains the following examples
	\begin{enumerate}
		\item Localizations of the categories $(\mo{Sp}_G^{gen})^{\omega}$ of compact objects in the category spectra of genuine equivariant $G$-spectra for a compact Lie group~$G$. \cite[~Corollary 5.10]{Balmer2020}
		\item The category $\mo{Mod}_R^{perf}$ of perfect complexes over an ordinary local ring~$R$. \cite[~Corollary 5.11]{Balmer2020}
		\item Localizations of any tt-category $\mc{C}$ with weakly Noetherian spectrum which is stratified \cite{barthel2023stratification}.
		\item Localizations of the stable category of finite-dimensional Lie superalgebra representations $\mo{stab}(\mc{F}_{(\mf{g},\mf{g}_{\overline{0}})})$ over a complex Lie superalgebra. \cite{hamil2024homologicalspectrumnilpotencetheorems}
		\item Localizations of the category of perfect modules over a rational $\bb{E}_{\infty}$-ring $R$ with $\pi_*(R)$ a Noetherian ring, Proposition~\ref{prop3.5}, follows from \cite{mathew2016residuefieldsclassrational}.
		\item The category of perfect modules over a connective $\bb{E}_{\infty}$-ring $R$ such that $\pi_*(R)$ is a Noetherian ring and $\pi_0(R)$ is local.  (Proposition~\ref{prop3.1}).
		\item The category of perfect modules over a rational connective $\bb{E}_{\infty}$-ring $R$ with $\pi_0(R)$ local.
	\end{enumerate}
\end{theorem}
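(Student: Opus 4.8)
Parts~(a)--(f) are exactly the results cited in the statement, so the only assertion requiring proof is~(g), and I sketch how I would deduce it from Theorems~\ref{th3.9}, \ref{th3.13} and~\ref{th3.14}. Fix a rational connective $\bb{E}_{\infty}$-ring $R$ with $\pi_0(R)$ local; by Proposition~\ref{prop3.1} the category $\mathrm{Mod}_R^{perf}$ is a local tt-category, so the exact-nilpotence condition for it is meaningful. The plan is: (i) use Theorem~\ref{th3.14} to reduce to the case that $R$ is the localization at a prime of a finitely presented connective rational $\bb{E}_{\infty}$-algebra, and (ii) re-run the proof of Theorem~\ref{th3.13} in that case.

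For (i), write $R\simeq\varinjlim_{\alpha}R_{\alpha}$ as a filtered colimit of finitely presented connective $\bb{E}_{\infty}$-$\bb{Q}$-algebras, so that $\pi_0(R)\simeq\varinjlim_{\alpha}\pi_0(R_{\alpha})$ is a filtered colimit of commutative rings. Let $\mathfrak{m}\subset\pi_0(R)$ be the maximal ideal and $\mathfrak{p}_{\alpha}\subset\pi_0(R_{\alpha})$ its preimage; the $\mathfrak{p}_{\alpha}$ form a compatible system of primes, with $\mathfrak{p}_{\alpha}$ the preimage of $\mathfrak{p}_{\beta}$ under each transition map. Replacing $R_{\alpha}$ by its localization $R_{\alpha}':=(R_{\alpha})_{\mathfrak{p}_{\alpha}}$, one has $R\simeq\varinjlim_{\alpha}R_{\alpha}'$ (nothing further is inverted in the colimit, $\pi_0(R)$ being already local), each $\pi_0(R_{\alpha}')$ is a Noetherian local ring, and each transition map $R_{\alpha}'\to R_{\beta}'$ induces a local homomorphism of local rings on $\pi_0$. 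By Corollary~\ref{cor3.2} the induced functors $\mathrm{Mod}_{R_{\alpha}'}^{perf}\to\mathrm{Mod}_{R_{\beta}'}^{perf}$ are local, and since $\mathrm{Mod}_{(-)}^{perf}$ carries filtered colimits of $\bb{E}_{\infty}$-rings to filtered colimits in $2\text{-}\mathrm{Ring}$, we have $\mathrm{Mod}_R^{perf}\simeq\varinjlim_{\alpha}\mathrm{Mod}_{R_{\alpha}'}^{perf}$ along these local functors. Theorem~\ref{th3.14} then reduces us to proving the exact-nilpotence condition for each $\mathrm{Mod}_{R_{\alpha}'}^{perf}$; equivalently, we may assume $R=S_{\mathfrak{p}}$ with $S$ finitely presented connective rational $\bb{E}_{\infty}$ and $\mathfrak{p}\subset\pi_0(S)$ prime. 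Write $k$ for the residue field of the local ring $\pi_0(R)$.

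For such $R$ I would supply the two inputs used to prove Theorem~\ref{th3.13}. First, $\mathrm{Spc}(R)$ is a Noetherian space: $\mathrm{Mod}_R^{perf}$ is the idempotent completion of a Verdier quotient of $\mathrm{Mod}_S^{perf}$, so $\mathrm{Spc}(R)$ is a subspace of $\mathrm{Spc}(S)$, which is Noetherian by the version of Theorem~\ref{th3.9}/Corollary~\ref{cor3.10} for finitely presented \emph{connective} rational $\bb{E}_{\infty}$-algebras with arbitrary finite-type $\pi_0$ announced in the discussion preceding Theorem~\ref{th3.9}; hence there is $Z\in\mathrm{Mod}_R^{perf}$ with $\mathrm{supp}(Z)$ the unique closed point. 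Second, applying that same connective version of Theorem~\ref{th3.9} to $S$ and base-changing the resulting residue fields along $S\to S_{\mathfrak{p}}=R$ (discarding those that become $0$, using Proposition~\ref{prop3.8}(a)) yields a finite collection $\{R\to L_j\}$, each $L_j$ an even $2$-periodic $\bb{E}_{\infty}$-algebra with $\pi_0(L_j)$ a field, detecting nilpotence over $R$. The remaining ingredient is the uniqueness of the residue field at the closed point: $k\otimes_R L_j\not\simeq 0$ whenever the image of $\mathrm{Spc}(L_j)$ is the closed point --- this is precisely the inductive construction $R=R_0\to R_1\to\cdots$ in the proof of Theorem~\ref{th3.13}, transcribed with $k$ now the residue field of $\pi_0(R)$ rather than $\pi_0(R)$ itself. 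Granting all this, the proof concludes exactly as in Theorem~\ref{th3.13}: given a fiber sequence $y\xrightarrow{g}\unit\xrightarrow{f}x$ of perfect $R$-modules, dualize if necessary so that $g\otimes_R k\simeq 0$; then $Z\otimes g$ is $\otimes$-nilpotent, since by Proposition~\ref{prop3.8}(b) it suffices to check this after each $-\otimes_R L_j$, where either $L_j\otimes_R Z\simeq 0$ (when $\mathrm{Spc}(L_j)$ avoids the closed point) or else $g\otimes_R L_j\simeq 0$ --- for over the field-like $L_j$ one of $f\otimes_R L_j, g\otimes_R L_j$ vanishes, and $f\otimes_R L_j\simeq 0$ is impossible since, combined with $g\otimes_R k\simeq 0$, it would force $\unit\simeq 0$ over the nonzero ring $k\otimes_R L_j$.

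The main obstacle is the third paragraph: making the ``connective case'' of Theorem~\ref{th3.9} and Corollary~\ref{cor3.10} explicit --- i.e.\ carrying the Hironaka resolution argument and the production of residue fields through without the hypothesis that $\pi_0$ be a field --- together with the parallel adaptation of the residue-field-uniqueness construction of Theorem~\ref{th3.13} to a local ring $\pi_0(R)$. The filtered-colimit reduction of the second paragraph is by comparison routine bookkeeping.
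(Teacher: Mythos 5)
Your reduction in the second paragraph (write $R$ as a filtered colimit of localizations of finitely presented connective $\bb{E}_{\infty}$-$\bb{Q}$-algebras at the pullback of the maximal ideal, apply Corollary~\ref{cor3.2} and Theorem~\ref{th3.14}) is exactly the first half of the paper's argument. But your third paragraph is where the proof actually has to happen, and there you have a genuine gap: you invoke a ``connective version'' of Theorem~\ref{th3.9}/Corollary~\ref{cor3.10} with non-field $\pi_0$, plus an adaptation of the residue-field-uniqueness construction in Theorem~\ref{th3.13} to a local $\pi_0(R)$, and you defer both. These are not routine transcriptions: the proofs in the paper use $\pi_0(R)=k$ a field in an essential way --- the cell-by-cell induction in Theorem~\ref{th3.9} needs each $\pi_i(R)$ to be a finite-dimensional $k$-vector space, the whole setup uses Mathew's result that such an $R$ is a $k$-algebra, and the uniqueness argument in Theorem~\ref{th3.13} repeatedly factors $\pi_*(R)\to\pi_*(L[x_2^{\pm 1}])$ through $k$ and exploits finiteness over $k$ (the integrality step producing the monic homogeneous polynomial). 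None of this is available off the shelf when $\pi_0(R)$ is merely local, and the paper proves no such generalization; the remark before Theorem~\ref{th3.9} only gestures at one. So as written your plan reduces (g) to statements that are themselves unproved and at least as hard as (g).

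The paper closes this with a short additional reduction that you missed and that makes the heavy lifting unnecessary. After the filtered-colimit step, $\pi_0(R)$ is a localization of a finite-type $\bb{Q}$-algebra, hence Noetherian local; choose generators $\mf{m}=(x_1,\ldots,x_n)$ and form $R':=R/(x_1,\ldots,x_n)$. Then $R'$ is a connective finitely presented $\bb{E}_{\infty}$-algebra over the residue field $\pi_0(R)/\mf{m}$, so Theorem~\ref{th3.13} applies to it \emph{as stated}; and $R'$ is perfect as an $R$-module, so the exact-nilpotence condition transfers back: given a fiber sequence $y\xrightarrow{g}\unit\xrightarrow{f}x$ in $\mo{Mod}_R^{perf}$, base change to $R'$, obtain a nonzero $z'\in\mo{Mod}_{R'}^{perf}$ with, say, $z'\otimes_{R'}(f\otimes_R R')$ $\otimes$-nilpotent, and observe that $z'$ is a nonzero perfect $R$-module with $z'\otimes_R f^{\otimes m}\simeq z'\otimes_{R'}(f\otimes_R R')^{\otimes m}$. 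This sidesteps entirely the need for residue fields, Noetherianity of $\mo{Spc}(R)$, or uniqueness of the residue field at the closed point in the non-field case, which is precisely the part your proposal leaves open.
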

\begin{proof}
For part (g), let $R$ be any connective rational $\bb{E}_{\infty}$-ring with $\pi_0(R)$ local.  Then, $R$ can be written as a filtered colimit of localizations of finitely presented $\bb{E}_{\infty}$-$\bb{Q}$-algebras, by first writing $R$ as an arbitrary filtered colimit of finitely presented $\bb{E}_{\infty}$-$\bb{Q}$-algebras, and localizing every term $S$ in the colimit at the pullback to $\pi_0(S)$ of the maximal ideal in~$\pi_0(R)$.  By Corollary~\ref{cor3.2}, a map $R\to S$ of connective local $\bb{E}_{\infty}$-rings induces a local map on their categories of perfect modules if $\pi_0(R)\to \pi_0(S)$ is a local ring homomorphism.  By \cite[~Corollary 4.8.5.13]{HA}, this gives a presentation of $\mo{Mod}_R^{perf}$ as a filtered colimit along local transition maps of categories $\mo{Mod}_S^{perf}$ with $S$ the localization of a finitely presented connective rational $\bb{E}_{\infty}$-ring, so we may assume that $R$ has these same properties Theorem~\ref{th3.14}.
	
Note that $\pi_0(R)$ is a Noetherian ring, hence we can write the maximal ideal as~$m=(x_1,\ldots,x_n)$.  The $\bb{E}_{\infty}$-ring $R/(x_1,\ldots,x_n)$ is a connective, finitely presented $\bb{E}_{\infty}$-$\pi_0(R)/m$-algebra, with $\pi_0(R)/m$ a field.  The module $R/(x_1,\ldots,x_n)$ is also perfect, so the exact-nilpotence condition holds for $\mo{Mod}_{R}^{perf}$ if and only if it holds for $\mo{Mod}_{R/(x_1,\ldots,x_n)}^{perf}$, where the claim follows by Theorem~\ref{th3.13}.
\end{proof}
%------------------------------------------------------------------------------
%------------------------------------------------------------------------------
\newpage 
\section{Strengthening Theorem 1.2}
Finally, we will explain how the statement of Theorem~\ref{th1.2} can be improved to include a bound on the order of nilpotence.  The idea for this comes from recent ultraproduct constructions in higher algebra \cite{barthel2020chromatichomotopytheoryasymptotically}\cite{levy2023categorifyingreducedrings}.  We recall the definitions here.
\begin{definition}\label{def4.1}
Fix a $\bb{N}$-indexed collection $\{\mc{C}_i\}$ of tt-categories, and some non-principal ultrafilter $\mc{U}$ on the natural numbers~$\bb{N}$.  Then the ultraproduct of this collection with respect to $\mc{C}_i$ is defined as: $$\prod_{\mc{U}}\mc{C}_i=\varinjlim_{I\in\mc{U}}\prod_{i\in I}\mc{C}_i.$$
\end{definition}
\begin{remark}\label{rem4.2}
Since products and filtered colimits of symmetric monoidal stable $\infty$-categories commute with passing to the homotopy category, and both operations make sense for triangulated categories, the above construction could be done equally well at the triangulated level, for the more finitely-inclined.
\end{remark}
The formalism of ultraproducts makes the following proof rather simple:
\begin{theorem}\label{th4.3}
	The following are equivalent,
	\begin{enumerate}
		\item The nerves of steel conjecture holds.
		\item For every local tt-category $\mc{T}$, the exact-nilpotence condition holds.
		\item There exists an integer $n$ such that for every local tt-category $\mc{T}$, and any fiber sequence as in Definition~\ref{def1.1}, there exists a nonzero object $z\in\mc{T}$ such that either $z\otimes g^{\otimes n}\simeq 0$ or~$z\otimes f^{\otimes n}\simeq 0$.
	\end{enumerate}
\end{theorem}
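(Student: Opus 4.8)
The plan is to obtain $(1)\Leftrightarrow(2)$ from Theorem~\ref{th1.2}, and $(3)\Rightarrow(2)$ from the observation that if $z\otimes g^{\otimes n}\simeq 0$ then $(z\otimes g)^{\otimes n}\simeq z^{\otimes n}\otimes g^{\otimes n}\simeq z^{\otimes(n-1)}\otimes(z\otimes g^{\otimes n})\simeq 0$, so that $z\otimes g$ is $\otimes$-nilpotent (and symmetrically for $f$), and $z$ witnesses the exact-nilpotence condition. The only implication with content is $(2)\Rightarrow(3)$, which I would prove by an ultraproduct compactness argument: a failure of $(3)$ produces, for each $n$, a local tt-category in which nilpotence in a fixed fiber sequence can only be achieved with ``order larger than $n$'', and amalgamating these categories along a non-principal ultrafilter yields a single local tt-category in which the corresponding nilpotence order would have to be infinite, i.e. absent, contradicting $(2)$.

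In detail, suppose $(3)$ fails, so that for each $n\ge 1$ there is a local tt-category $\mc{C}_n$ together with a fiber sequence $y_n\xrightarrow{g_n}\unit\xrightarrow{f_n}x_n$ such that $z\otimes g_n^{\otimes n}\not\simeq 0$ and $z\otimes f_n^{\otimes n}\not\simeq 0$ for every nonzero $z\in\mc{C}_n$. Fix a non-principal ultrafilter $\mc{U}$ on $\bb{N}$ and set $\mc{C}:=\prod_{\mc{U}}\mc{C}_n$ (Definition~\ref{def4.1}). Assuming $\mc{C}$ is a local tt-category (the point discussed below), and using that finite products and filtered colimits of stable $\infty$-categories are exact, the sequences $y_n\xrightarrow{g_n}\unit\xrightarrow{f_n}x_n$ assemble into a fiber sequence $y\xrightarrow{g}\unit\xrightarrow{f}x$ in $\mc{C}$. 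Applying $(2)$ to it yields a nonzero $z\in\mc{C}$, say represented by a sequence $(z_n)_{n\in I}$ with $I\in\mc{U}$, such that, after possibly interchanging $f$ and $g$, the morphism $z\otimes g$ is $\otimes$-nilpotent, i.e. $z^{\otimes m}\otimes g^{\otimes m}\simeq(z\otimes g)^{\otimes m}\simeq 0$ in $\mc{C}$ for some $m$. Unwinding the ultraproduct, the set $\{\,n : z_n\not\simeq 0 \text{ and } z_n^{\otimes m}\otimes g_n^{\otimes m}\simeq 0\,\}$ lies in $\mc{U}$, hence is infinite, hence contains some $n\ge m$. For such an $n$ the object $w:=z_n^{\otimes m}$ is nonzero -- a nonzero object of a local tt-category has nonzero tensor powers, since $w'\otimes w'\simeq 0$ forces $w'\simeq 0$ -- while $w\otimes g_n^{\otimes n}\simeq(z_n^{\otimes m}\otimes g_n^{\otimes m})\otimes g_n^{\otimes(n-m)}\simeq 0$, contradicting the choice of $\mc{C}_n$. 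Hence $(2)\Rightarrow(3)$.

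The one step I expect to require care is checking that $\mc{C}=\prod_{\mc{U}}\mc{C}_n$ is again a local tt-category. Rigidity and idempotent-completeness are inherited because $\mc{C}$ is computed (in $2\text{-}\mo{Ring}$) as a filtered colimit of finite products of rigid idempotent-complete categories, every object of which is, up to a retract, pulled back from a finite stage; this is the standard ultraproduct bookkeeping for higher algebra of \cite{barthel2020chromatichomotopytheoryasymptotically} and \cite{levy2023categorifyingreducedrings}, and by Remark~\ref{rem4.2} one may instead run the whole argument with triangulated categories. Locality is where the ultrafilter is genuinely used (a product of $\ge 2$ nonzero tt-categories is never local): if $a\otimes b\simeq 0$ in $\mc{C}$, pick representatives with $a_n\otimes b_n\simeq 0$ for $n$ in some $I\in\mc{U}$; since each $\mc{C}_n$ is a local tt-category, the sets $\{n\in I:a_n\simeq 0\}$ and $\{n\in I:b_n\simeq 0\}$ cover $I$, so one of them belongs to $\mc{U}$, forcing $a\simeq 0$ or $b\simeq 0$, while $\mc{C}\ne 0$ as each $\mc{C}_n\ne 0$. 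Once this is in place, the remainder is just the elementary ``pass to an index $n\ge m$'' device, which is exactly what upgrades the unbounded-order nilpotence returned by $(2)$ to the uniform exponent $n$ demanded in $(3)$.
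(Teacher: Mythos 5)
Your proposal is correct and follows essentially the same route as the paper: the equivalence of (a) and (b) is quoted from Theorem~\ref{th1.2}, the implication from (c) is the trivial tensor-power observation, and the substantive implication is proved by forming the ultraproduct $\prod_{\mc{U}}\mc{C}_n$ along a non-principal ultrafilter, checking it is a local tt-category, assembling the fiber sequences, and contradicting the choice of $\mc{C}_n$ at an index beyond the nilpotence exponent. Your bookkeeping is in fact slightly cleaner than the paper's (you negate (c) directly, keep the $f$/$g$ symmetry, and pass to the nonzero power $z_n^{\otimes m}$ to reconcile ``$z\otimes g$ is $\otimes$-nilpotent'' with the condition $z\otimes g^{\otimes n}\simeq 0$), but the argument is the same.
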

\begin{proof}
Theorem~\ref{th1.2} is the equivalence $(a)\iff (b)$, so we need only see that $(b)\implies (c)$.  Suppose otherwise, that we had a collection $\mc{C}_n$ of local tt-categories, such that for each $n$, there is a fiber sequence $$y_n\xrightarrow{g_n}\unit \xrightarrow{f_n}x_n$$ such that there exists $0\neq z_n\in \mc{C}_n$ with $z_n\otimes g_n^{\otimes n}\simeq 0$, but there is no $0\neq z_{n}^{\prime}\in \mc{C}_n$ with~$z_n^{\prime}\otimes g^{\otimes n-1}\simeq 0$.  Fix some non-principal ultrafilter $\mc{U}$ on the naturals, and let $\mc{T}:=\prod_{\mc{U}}\mc{C}_n$ be the ultraproduct of these categories.  Then, $\mc{T}$ remains a local tt-category, since given nonzero objects $(x_n),(y_n)\in\mc{T}$, the set of $n$ such that $x_n$ (resp. $y_n$) is nonzero is contained in $\mc{U}$, so too is their intersection, and since $\mc{C}_n$ were all local, $x_n\otimes y_n$ is nonzero when both terms are, hence $(x_n)\otimes (y_n)$ is a nonzero object of~$\mc{T}$.  Rigidity, and the stable/triangulated structure both happen pointwise.  Now, there is a fiber sequence 
$$(y_n)_n \xrightarrow{(g_n)_n}\unit \xrightarrow{(f_n)_n} (x_n)_n$$ in $\mc{T}$, where $(f_n)_n$ is not $\otimes$-nilpotent on any nonzero object, and neither is~$(g_n)_n$.  Indeed, if $(g_n)_n$ were $\otimes$-nilpotent on a nonzero object $z$, we could take $k$ such that $((g_n)_n)^{\otimes k}\otimes z\simeq 0$, and then choosing some $I\in \mc{U}$ with $z$ nonzero on $I$, and any $r\in I$ with $r>k$, we would be supplied with a nonzero object $z_r\in\mc{C}_r$ such that $z_r\otimes g_r^{\otimes k}\simeq 0$, contradicting the choice of~$\mc{C}_r$.
\end{proof}
%------------------------------------------------------------------------------
%------------------------------------------------------------------------------
%%------------------------------------------------------------------------------
%------------------------------------------------------------------------------
\newpage
\printbibliography

\end{document}